\def\a        {{\boldsymbol a}}
\def\x        {{\boldsymbol x}}
\def\n        {{\boldsymbol n}}
\def\dx     {{\rm d}{\boldsymbol x}}
\def\R {{\mathds R}}
\def\N {{\mathds N}}
\newcommand{\rf}[1]{{\color{blue} #1}}
\newtheorem{theorem}{Theorem}[section]
\newtheorem{lemma}[theorem]{Lemma}
\newtheorem{proposition}[theorem]{Proposition}
\newtheorem{corollary}[theorem]{Corollary}
\newtheorem{remark}[theorem]{Remark}
\begin{document}

\title[Numerical analysis for the classical Keller--Segel model]{Analysis of a fully discrete approximation for the classical Keller--Segel model: lower and \emph{a priori} bounds}
\author[J. V. Gutiérrez-Santacreu]{Juan Vicente Gutiérrez-Santacreu$^\dag$}
\address{$\dag$Dpto. de Matemática Aplicada I\\
         E. T. S. I. Informática\\
         Universidad de Sevilla\\
         Avda. Reina Mercedes, s/n.\\
         E-41012 Sevilla\\
         Spain\\
         E-mail: {\tt juanvi@us.es}}
\author[J. R. Rodríguez-Galván]{José Rafael Rodríguez-Galván$^\ddag$}
\address{$\ddag$ Departamento de Matemáticas. Facultad de Ciencias. Campus Universitario de Puerto Real, Universidad de Cádiz. E-11510 Puerto Real, Cádiz E-mail: {\tt rafael.rodriguez@uca.es}}
\thanks{JVGS and JRRG were partially supported by the Spanish Grant No. PGC2018-098308-B-I00 from Ministerio de Ciencias e Innovación - Agencia Estatal de Investigación with the participation of FEDER}

\date{\today}

\begin{abstract}
This paper is devoted to constructing approximate solutions for the classical Keller--Segel model governing \emph{chemotaxis}. It consists of a system of nonlinear parabolic equations, where the unknowns are the average density of cells (or organisms), which is a conserved variable,  and  the average density of chemoattractant.

The numerical proposal is made up of a crude finite element method together with a mass lumping technique and a semi-implicit Euler time integration. The resulting scheme turns out to be linear and decouples the computation of variables. The approximate solutions keep lower bounds -- positivity for the cell density and nonnegativity for the chemoattractant density --, are bounded in the $L^1(\Omega)$-norm, satisfy a discrete energy law, and have \emph{ a priori} energy estimates.  The latter is achieved by means of a discrete Moser--Trudinger inequality. As far as we know, our numerical method is the first one that can be encountered in the literature dealing with all of the previously mentioned properties at the same time. Furthermore, some numerical examples are carried out to support and complement the theoretical results.

\end{abstract}

\maketitle
{\bf 2010 Mathematics Subject Classification.} 35K20, 35K55, 65K60.

{\bf Keywords.} Keller--Segel equations; non-linear parabolic equations; finite-element approximation; lower bounds; a priori bounds.

\tableofcontents

\section{Introduction}
\subsection{Aims} In 1970/71 Keller and Segel \cite{Keller_Segel_1970, Keller_Segel_1971} attempted to derive a set of equations for modeling \emph{chemotaxis} -- a  biological process through which an organism (or a cell) migrates in response to a chemical stimulus being attractant or repellent.  It is nowadays well-known that the work of Keller and Segel turned out to be somehow biologically inaccurate since their equations provide unrealistic solutions; a little more precisely, solutions that blow up in finite time. Such a phenomenon does not occur in nature. Even though the original Keller--Segel equations are less relevant from a biological point of view, they are mathematically of great interest.

Much of work for the Keller--Segel equations has been carried out in developing purely analytical results, whereas there are very few numerical results in the literature. This is due to the fact that solving numerically the Keller--Segel equations is a challenging task because their solutions exhibit many interesting mathematical properties which are not easily adapted to a discrete framework. For instance, solutions to the Keller--Segel equations satisfy lower bounds (positivity and non-negativity) and enjoy an energy law, which is obtained by testing the equations against non linear functions. Cross-diffusion mechanisms governing the chemotactic phenomena are the responsible for the fact that the Keller--Segel equations are  so difficult to analyze not only theoretically but also numerically.

In spite of being a limited model, it is hoped that developing and analyzing numerical methods for the classical Keller--Segel equations may open new roads to deeper insights and better understandings for dealing with the numerical approximation of other chemotaxis models -- biologically more realistic, but which are, however, inspired on the original Keller--Segel formulation. In a nutshell, these other chemotaxis models are modifications of the Keller--Segel equations in order to avoid the non-physical blow up of solutions and hence produce solutions being closer to chemotaxis phenomena. For these other chemotaxis models, it is recommended the excellent surveys of Hillen and Painter \cite{Hillen_Painter_2009}, Horstamann \cite{Horstamann_2003, Horstamann_2004}, and, more recently, Bellomo, Bellouquid, Tao, and Winkler \cite{Bellomo_Bellouquid_Tao_Winkler_2015}. In these surveys the authors reviewed to date as to when they were written the state of art of modeling and mathematical analysis for the Keller--Segel equations and their variants.

It is our aim in this work to design a fully discrete algorithm for the classical Keller--Segel equations based on a finite element discretization whose discrete solutions satisfy lower and \emph{a priori} bounds. 

\subsection{The Keller--Segel equations} Let $\Omega\subset \R^2$ be a bounded domain, with $\n$ being its outward-directed unit normal vector to $\Omega$, and let $[0,T]$ be a time interval. Take $Q=(0,T]\times \Omega$ and $\Sigma=(0,T]\times\partial\Omega$. Then the boundary-value problem for the Keller--Segel equations reads a follows. Find $u: \bar Q\to (0,\infty)$ and $v:\bar Q \to [0,\infty)$ satisfying
\begin{equation}\label{KS}
\left\{
\begin{array}{rcll}
\partial_tu-\Delta u&=&-\nabla\cdot(u\nabla v)&\mbox{ in } Q,
\\
\partial_t v -\Delta v&=&u-v&\mbox{ in }Q,
\end{array}
\right.
\end{equation}
subject to the initial conditions
\begin{equation}\label{IC}
u(0)=u_0\quad\mbox{ and }\quad v(0)=v_0\quad\mbox{ in }\quad \Omega,
\end{equation}
and the boundary conditions
\begin{equation}\label{BC}
\nabla u\cdot \n=0\quad\mbox{ and }\quad \nabla v\cdot\n=0\quad\mbox{ on }\quad \Sigma.
\end{equation}
Here $u$ is the average density of organisms (or cells), which is a conserved variable, and $v$ is the average density of chemical sign, which is a nonconserved variable.

System \eqref{KS} was motivated by Keller and Segel \cite{Keller_Segel_1970} describing the aggregation phenomena exhibited by the amoeba \emph{Dictyostelium discoideum} due to an attractive chemical substance referred to as \emph{chemoattractant}, which is generated by the own amoeba and is, nevertheless, degraded by living conditions. Moreover, diffusion is also presented in the motion of amebae and chemoattractant.

The diffusion phenomena  performed by cells and chemoattractant are modelled by the terms  $-\Delta u$ and $-\Delta v$, respectively, whereas the aggregation mechanism is described by the term $-\nabla \cdot (u\nabla v)$. It is this nonlinear term that is the major difficulty in studying system \eqref{KS}.  Further the production and degradation of chemoattractant are associated with the term $u-v$.

Concerning the mathematical analysis for system \eqref{KS}, Nagai, Senba, and Yoshida \cite{Nagai_Senba_Yoshida_1997} proved  existence, uniqueness and regularity of solutions under the condition  $\int_\Omega u_0(\x)\,\dx\in (0, 4\pi)$. In proving this, a variant of Moser--Tridinguer's inequality was used. In the particular case that  $\Omega$ be a ball, the above-mentioned condition becomes  $\int_\Omega u_0(\x)\,\dx\in (0,8\pi)$. Herrero and Velázquez \cite{Herrero_Velazquez_1997} dealt with the first blow--up framework by constructing some radially symmetric two-dimensional solutions which blow up within finite time. The next progress in this sense with $\Omega$ being non-radial and simply connected was the work of Horstmann and Wang \cite{Horstmann_Wang_2001} who found some unbounded solutions provided that $\int_\Omega u_0(\x)\,\dx > 4\pi$ and  $\int_\Omega u_0(\x)\,\dx\not\in \{4k\pi\, |\, k\in\N\}$. So far there is no supporting evidence as to whether such solutions may evolve to produce a blow-up phenomenon within finite time or whether, on the contrary, may increase to infinity with time. In three dimensions, Winkler \cite{Winkler_2013} proved that there exist radially symmetric solutions blowing up in finite time for any value of $\int_\Omega u_0(\x)\,\dx$.

The main tool \cite{Winkler_2013} in proving blow-up solutions is the energy law which stems from system \eqref{KS}.  Nevertheless, an inadequate approximation of lower bounds can trigger off oscillations of the variables, which can lead to spurious, blow-up solutions.

Concerning the numerical analysis for system \eqref{KS}, very little is said about numerical algorithms which keep lower bounds, are $L^1(\Omega)$-bounded and have a discrete energy law. Proper numerical  treatment of these properties is made difficult by the fact that the non-linearity occurs in the highest order derivative.  Numerical algorithms are mainly designed so as to keep lower bounds and to be mass-preserving. We refer the reader to \cite{Saito_2012, De-Leenheer_Gopalakrishnan_Zuhr_2013, Li_Shu_Yang_2017, Chertock_Epshteyn_Hu_Kurganov_2018, Sulman_Nguyen_2019}. We were pointed out by a referee the paper \cite{Guo_Li_Yang_2019}. In it, the authors rewrote system \eqref{KS} by using several \emph{ad hoc} auxiliary variables so that the resulting discontinuous Galerkin method fulfilled a discrete energy law, but no lower bounds were proved. As far as we are concerned, there is no numerical method facing lower bounds as well as a discrete energy law.

\subsection{Notation} We collect here as a reference some standard notation used throughout the paper.
For $p\in[1,\infty]$, we denote by $L^p(\Omega)$ the usual Lebesgue space, i.e.,
$$
L^p(\Omega) = \{v : \Omega \to \R\, :\, v \mbox{ Lebesgue-measurable}, \int_\Omega |v(\x)|^p {\rm d}\x<\infty \}.
$$
or
$$
L^\infty(\Omega) = \{v : \Omega \to \R\, :\, v \mbox{ Lebesgue-measurable}, {\rm ess}\sup_{\x\in \Omega} |v(\x)|<\infty \}.
$$

This space is a Banach space endowed with the norm
$\|v\|_{L^p(\Omega)}=(\int_{\Omega}|v(\x)|^p\,{\rm d}\x)^{1/p}$ if $p\in[1, \infty)$ or $\|v\|_{L^\infty(\Omega)}={\rm ess}\sup_{\x\in \Omega}|v(\x)|$ if $p=\infty$. In particular,  $L^2(\Omega)$ is a Hilbert space.  We shall use
$\left(u,v\right)=\int_{\Omega}u(\x)v(\x){\rm d}\x$ for its inner product and $\|\cdot\|$ for its norm.

Let $\alpha = (\alpha_1, \alpha_2)\in \mathds{N}^2$ be a multi-index with $|\alpha|=\alpha_1+\alpha_2$, and let
$\partial^\alpha$ be the differential operator such that
$$\partial^\alpha=
\Big(\frac{\partial}{\partial{x_1}}\Big)^{\alpha_1}\Big(\frac{\partial}{\partial{x_2}}\Big)^{\alpha_2}.$$

For $m\ge 0$ and $p\in[1, \infty)$, we consider $W^{m,p}(\Omega)$ to be the Sobolev space of all functions whose $m$ derivatives are in $L^p(\Omega)$, i.e.,
$$
W^{m,p}(\Omega) = \{v \in L^p(\Omega)\,:\, \partial^k v \in L^2(\Omega)\ \forall ~ |k|\le m\}
$$ associated to the norm
$$
\|f\|_{W^{m,p}(\Omega)}=\left(\sum_{|\alpha|\le m} \|\partial^\alpha f\|^p_{L^p(\Omega)}\right)^{1/p} \quad \hbox{for} \ 1 \leq p < \infty,
$$
and
$$
\|f\|_{W^{m,p}(\Omega)}=\max_{|\alpha|\le m} \|\partial^\alpha f\|_{L^\infty(\Omega)} \quad  \hbox{for} \  p = \infty.
$$
For $p=2$, we denote $W^{m,2}(\Omega)=H^m(\Omega)$. Moreover, we make of use the space
$$
H_N^2(\Omega)=\{ v\in H^2(\Omega) : \int_\Omega v(\x)\,\dx=0\mbox{ and } \partial_{\n} v=0 \mbox{ on } \partial\Omega\},
$$
for which is known that $\|v\|_{H^2_N(\Omega)}$ and $\|\Delta v\|$ are equivalent norms.
\subsection{Outline}
The remainder of this paper is organized in the following way.  In the next section we state our finite element space and some tools. In particular, we prove a discrete version of a variant of Moser--Trudinger's inequality.  In section 3, we apply our ideas to discretize system \eqref{KS} in space and time for defining our numerical method and formulate our main result. Next is section 4 dedicated to demonstrating lower bounds, a discrete energy law, and \emph{a priori} bounds all of which are local in time for approximate solutions. This is accomplished in a series of lemmas where the final argument is an induction procedure on the time step so as to obtain the above mentioned properties globally in time. Finally, in section 5, we consider two numerical examples regarding blow-up and non blow-up scenarios.

\section{Technical  preliminaries}
This section is mainly devoted to setting out the hypotheses and some auxiliary results concerning the finite element space that will use throughout this work.

\subsection{Hypotheses}
We construct the finite element approximation of \eqref{KS} under the following assumptions on the domain, the mesh, and the finite element space.
\begin{enumerate}
\item [(H1)] Let $\Omega$ be a convex, bounded domain of $\R^2$ with a polygonal boundary, and let $\theta_\Omega$ be the minimum interior angle at the vertices of $\partial\Omega$.
\item[(H2)] Let $\{{\mathcal T}_{h}\}_{h>0}$  be a family of acute, shape-regular, quasi-uniform triangulations of  $\overline{\Omega}$ made up of triangles, so that $\overline \Omega=\cup_{T\in {\mathcal T}_h}T$, where $h=\max_{T\in \mathcal{T}_h} h_T$, with $h_T$ being the diameter of $T$. More precisely, we assume that
\begin{enumerate}
\item[(a)]  there exists $\alpha>0$, independent of $h$, such that
$$
\min\{  {\rm diam}\,  B_T\, : \, T\in\mathcal{T}_h \}\ge \alpha h,
$$
where $B_T$ is the largest ball contained in $T$, and
\item[(b)] there exists $\beta > 0$ such that  every angle between two edges of a triangle $T$ is bounded by $\frac{\pi}{2} - \beta$.
\end{enumerate}
Further, let ${\mathcal N}_h = \{\a_i\}_{i\in I}$ be the coordinates of the nodes of ${\mathcal T}_h$.
\item [(H3)]  Associated with  ${\mathcal T}_h$ is the finite element space
$$
X_h = \left\{ x_h \in {C}^0(\overline\Omega) \;:\;
x_h|_T \in \mathcal{P}_1(T), \  \forall T \in \mathcal{T}_h \right\},
$$
where $\mathcal{P}_1(T)$ is the set of linear polynomials on  $T$.  Let $\{\varphi_\a\}_{\a\in{\mathcal{N}_h}}$ be the standard basis functions for $X_h$.
\end{enumerate}
\subsection{Auxiliary results} Our first result is concerned with the sign of the entries of the rigid matrix.
\begin{proposition} Let $\Omega$ be a polygonal. Consider $X_h$ to be constructed over $\mathcal{T}_h$ being acute. Then, for each $T\in\mathcal{T}_h$ with vertices $\{\boldsymbol{a}_1,\boldsymbol{a}_2,\boldsymbol{a}_3\}$,  there exists a constant $C_{\rm neg}>0$, depending on $\beta$, but otherwise independent of $h$ and $T$, such that
\begin{equation}\label{off-diagonal}
\int_T \nabla\varphi_{\a_i}\cdot\nabla\varphi_{\a_j}\dx  \le - C_{\rm neg}
\end{equation} for all $\a_i,\a_j\in T$ with $i\not=j$, and
\begin{equation}\label{diagonal}
\int_T\nabla\varphi_{\a_i}\cdot\nabla\varphi_{\a_i}\dx\ge C_{\rm neg}
\end{equation}
for all $\a_i\in T$.
\end{proposition}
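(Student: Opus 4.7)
The plan is to reduce the whole statement to the classical cotangent formula for $P_1$ elements. Writing $\theta_k^T$ for the interior angle of $T$ at vertex $\a_k$, I would first establish that, for $\{i,j,k\}=\{1,2,3\}$,
$$
\int_T\nabla\varphi_{\a_i}\cdot\nabla\varphi_{\a_j}\dx \;=\; -\tfrac{1}{2}\cot\theta_k^T.
$$
To derive this I would use that each $\varphi_{\a_i}$ is affine on $T$ and vanishes on the opposite edge, so $\nabla\varphi_{\a_i}$ is a constant vector perpendicular to that edge whose magnitude is $1/h_i$, with $h_i$ being the height from $\a_i$. The angle between $\nabla\varphi_{\a_i}$ and $\nabla\varphi_{\a_j}$ is then $\pi-\theta_k^T$, and combining $\cos(\pi-\theta_k^T)=-\cos\theta_k^T$ with the elementary identity $|T|=\tfrac{1}{2}h_i h_j\sin\theta_k^T/\sin(\ldots)$ (equivalently, $2|T|=h_i\,|\a_j\a_k|$ and $\sin\theta_k^T\,|\a_j\a_k|\,|\a_i\a_k|=2|T|$) leads, after integration over $T$, to the displayed formula.

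Once the cotangent formula is in place, the off-diagonal bound \eqref{off-diagonal} is immediate: hypothesis (H2)(b) forces every interior angle of every $T\in{\mathcal T}_h$ to satisfy $\theta_k^T\le\pi/2-\beta$, hence
$$
\cot\theta_k^T\;\ge\;\cot(\pi/2-\beta)\;=\;\tan\beta\;>\;0,
$$
so $\int_T\nabla\varphi_{\a_i}\cdot\nabla\varphi_{\a_j}\dx \le -\tfrac{1}{2}\tan\beta$. This fixes the candidate constant $C_{\rm neg}=\tfrac{1}{2}\tan\beta$, which depends only on $\beta$.

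For the diagonal bound \eqref{diagonal}, I would use the partition-of-unity identity on $T$, namely $\varphi_{\a_1}+\varphi_{\a_2}+\varphi_{\a_3}\equiv 1$, whose gradient gives $\nabla\varphi_{\a_1}+\nabla\varphi_{\a_2}+\nabla\varphi_{\a_3}=0$ pointwise on $T$. Taking the dot product with $\nabla\varphi_{\a_i}$ and integrating yields
$$
\int_T|\nabla\varphi_{\a_i}|^2\dx \;=\; -\sum_{j\neq i}\int_T\nabla\varphi_{\a_i}\cdot\nabla\varphi_{\a_j}\dx \;\ge\; 2\cdot\tfrac{1}{2}\tan\beta \;=\; \tan\beta,
$$
so the same $C_{\rm neg}=\tfrac{1}{2}\tan\beta$ also serves for \eqref{diagonal}.

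The only real obstacle is establishing the cotangent identity cleanly; after that, the proof is purely trigonometric plus the partition-of-unity trick. I would either include the two-line derivation sketched above or defer it to a standard finite element reference and spend the bulk of the argument extracting the uniform constant $\tfrac{1}{2}\tan\beta$ from hypothesis (H2)(b), making explicit that neither the diameter $h_T$ nor the specific triangle $T$ enters the bound, since the cotangent formula is scale-invariant.
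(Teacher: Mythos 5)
Your proposal is correct. The paper itself gives no proof of this proposition --- it only cites an external reference --- so there is nothing to compare line by line; but your argument via the classical cotangent identity $\int_T\nabla\varphi_{\a_i}\cdot\nabla\varphi_{\a_j}\,\dx=-\tfrac12\cot\theta_k^T$, combined with hypothesis (H2)(b) to get the uniform bound $\cot\theta_k^T\ge\tan\beta$ and the partition-of-unity identity $\sum_k\nabla\varphi_{\a_k}=0$ for the diagonal entries, is the standard route to this result and yields the scale-invariant constant $C_{\rm neg}=\tfrac12\tan\beta$ exactly as claimed.
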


A proof of \eqref{off-diagonal} and \eqref{diagonal} can be found in \cite{JVGS_2018}. 
%

Both local and global finite element properties for $X_h$ will be needed such as inverse estimates and bounds for the interpolation error. We first recall some local inverse estimates. See \cite[Lem. 4.5.3]{Brenner_Scott_2008} or \cite[Lem. 1.138]{Ern_Guermond_2004} for a proof.
\begin{proposition} Let $\Omega$ be polygonal. Consider $X_h$ to be constructed over $\mathcal{T}_h$ being quasi-uniform. Then, for each $T\in \mathcal{T}_h$ and  $p\in[2,\infty]$, there exists a constant $C_{\rm inv}>0$, independent of $h$ and $T$, such that, for all $x_h\in X_h$,
\begin{equation}\label{inv_W1pToLp_local}
\|\nabla x_h \|_{L^p(T)}\le C_{\rm inv}\, h^{-1} \|x_h\|_{L^p(T)}
\end{equation}
and
\begin{equation}\label{inv_W1infToW1p_local}
\|\nabla x_h \|_{L^\infty(T)}\le C_{\rm inv}\, h^{-\frac{2}{p}} \|\nabla x_h\|_{L^p(T)}.
\end{equation}
\end{proposition}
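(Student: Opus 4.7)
The plan is to reduce both estimates to a fixed reference triangle $\hat{T}$ by the standard affine-equivalence argument. For each $T\in\mathcal{T}_h$, let $F_T:\hat{T}\to T$, $F_T(\hat{\x}) = B_T\hat{\x} + \boldsymbol{b}_T$, be the invertible affine map and set $\hat{x}_h = x_h\circ F_T \in \mathcal{P}_1(\hat{T})$. By the ball condition (H2)(a) and quasi-uniformity one has
\begin{equation*}
\|B_T\| \le C\,h, \qquad \|B_T^{-1}\| \le C\,h^{-1}, \qquad |\det B_T| \sim h^2,
\end{equation*}
with constants independent of $T$ and $h$.

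The only non-trivial input is a reference-element inequality: since $\mathcal{P}_1(\hat{T})$ is finite-dimensional, $\|\cdot\|_{L^p(\hat{T})}$ is a norm on it and $\|\hat{\nabla}\cdot\|_{L^q(\hat{T})}$ is a continuous seminorm for each $q\in[p,\infty]$, hence there exists $\hat{C}=\hat{C}(\hat{T},p)>0$ with
\begin{equation*}
\|\hat{\nabla}\hat{x}_h\|_{L^\infty(\hat{T})} \le \hat{C}\,\|\hat{\nabla}\hat{x}_h\|_{L^p(\hat{T})} \le \hat{C}\,\|\hat{x}_h\|_{L^p(\hat{T})} \qquad \forall\,\hat{x}_h\in\mathcal{P}_1(\hat{T}).
\end{equation*}

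To deduce \eqref{inv_W1pToLp_local}, I combine this with the chain-rule identity $\nabla x_h(F_T\hat{\x}) = B_T^{-T}\hat{\nabla}\hat{x}_h(\hat{\x})$ and change of variables to get
\begin{equation*}
\|\nabla x_h\|_{L^p(T)} \le \|B_T^{-1}\|\,|\det B_T|^{1/p}\,\|\hat{\nabla}\hat{x}_h\|_{L^p(\hat{T})}, \qquad \|\hat{x}_h\|_{L^p(\hat{T})} = |\det B_T|^{-1/p}\,\|x_h\|_{L^p(T)},
\end{equation*}
after which the $|\det B_T|^{\pm 1/p}$ factors cancel and $\|B_T^{-1}\|\le Ch^{-1}$ finishes the bound. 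For \eqref{inv_W1infToW1p_local} the analogous change-of-variables chain gives
\begin{equation*}
\|\nabla x_h\|_{L^\infty(T)} \le \|B_T^{-1}\|\,\|\hat{\nabla}\hat{x}_h\|_{L^\infty(\hat{T})}, \qquad \|\hat{\nabla}\hat{x}_h\|_{L^p(\hat{T})} \le \|B_T\|\,|\det B_T|^{-1/p}\,\|\nabla x_h\|_{L^p(T)},
\end{equation*}
and the reference estimate reduces the prefactor to $\|B_T^{-1}\|\,\|B_T\|\,|\det B_T|^{-1/p} \le C\,h^{-2/p}$, since $\|B_T^{-1}\|\,\|B_T\|=O(1)$ by shape regularity.

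There is no genuine obstacle: the whole proof is bookkeeping of scaling factors. The only point that truly requires care is that all constants depend solely on the shape-regularity parameter and on $\hat{T}$, so that they are uniform in $T$ and $h$ throughout the family $\{\mathcal{T}_h\}_{h>0}$.
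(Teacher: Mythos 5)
Your scaling argument is correct, with the right bookkeeping of $\|B_T\|$, $\|B_T^{-1}\|$, and $|\det B_T|$ and the correct appeal to norm equivalence on the finite-dimensional reference space. The paper does not prove this proposition itself but cites Brenner--Scott and Ern--Guermond, and the proofs there are exactly this affine-equivalence argument, so your approach coincides with the intended one.
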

Concerning global inverse inequalities, we need the following.
\begin{proposition} Let $\Omega$ be polygonal. Consider $X_h$ to be constructed over $\mathcal{T}_h$ being quasi-uniform. Then for each $p\in[2,\infty]$,  there exists a constant $C_{\rm inv}>0$, independent of $h$, such that, for all $x_h\in X_h$,
\begin{equation}\label{inv_LinfToL2_global}
\|x_h\|_{L^\infty(\Omega)}\le C_{\rm inv}\, h^{-1} \|x_h\|,
\end{equation}
\begin{equation}\label{inv_W1pToLp_global}
\|\nabla x_h \|_{L^p(\Omega)}\le C_{\rm inv}\, h^{-1} \|x_h\|_{L^p(\Omega)},
\end{equation}
\begin{equation}\label{inv_W1pToH1_global}
\|\nabla x_h \|_{L^p(\Omega)}\le C_{\rm inv}\, h^{-2(\frac{1}{2}-\frac{1}{p})} \|\nabla x_h\|_{L^2(\Omega)},
\end{equation}
and
\begin{equation}\label{inv_W1infToW1p_global}
\|\nabla x_h \|_{L^\infty(\Omega)}\le C_{\rm inv}\, h^{-\frac{2}{p}} \|\nabla x_h\|_{L^p(\Omega)}.
\end{equation}
\end{proposition}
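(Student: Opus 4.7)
The four global inequalities are all consequences of the local inverse inequalities already available, combined with the quasi-uniformity assumption (H2). My plan is to treat them in turn, leveraging the fact that on a quasi-uniform mesh every element diameter $h_T$ is comparable to the mesh size $h$, so constants are uniform in $T$.

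For \eqref{inv_LinfToL2_global}, the key step is a local inequality $\|x_h\|_{L^\infty(T)} \le C\,h^{-1}\|x_h\|_{L^2(T)}$ on each triangle. I would obtain it by pulling back to the reference triangle $\hat T$ via the affine map $F_T$, exploiting equivalence of $\|\cdot\|_{L^\infty(\hat T)}$ and $\|\cdot\|_{L^2(\hat T)}$ on the finite-dimensional space $\mathcal P_1(\hat T)$, and reading off the Jacobian scaling $|\det DF_T|\sim h^2$ in two dimensions. Then $\|x_h\|_{L^\infty(\Omega)}=\max_T\|x_h\|_{L^\infty(T)}\le C\,h^{-1}\max_T\|x_h\|_{L^2(T)}\le C\,h^{-1}\|x_h\|$, which gives \eqref{inv_LinfToL2_global}.

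For \eqref{inv_W1pToLp_global}, I would start from the local bound \eqref{inv_W1pToLp_local}, raise it to the $p$-th power, sum over $T\in\mathcal T_h$, and take a $p$-th root:
\begin{equation*}
\|\nabla x_h\|_{L^p(\Omega)}^p=\sum_{T\in\mathcal T_h}\|\nabla x_h\|_{L^p(T)}^p\le C_{\rm inv}^p\,h^{-p}\sum_{T\in\mathcal T_h}\|x_h\|_{L^p(T)}^p=C_{\rm inv}^p\,h^{-p}\|x_h\|_{L^p(\Omega)}^p.
\end{equation*}
The case $p=\infty$ is obtained by taking the maximum over $T$ instead. An entirely analogous argument starting from \eqref{inv_W1infToW1p_local} and taking the maximum yields \eqref{inv_W1infToW1p_global}.

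The remaining estimate \eqref{inv_W1pToH1_global} is the most delicate, and is where I expect the main subtlety. First I would establish a local inequality $\|\nabla x_h\|_{L^p(T)}\le C\,h^{-2(\frac12-\frac1p)}\|\nabla x_h\|_{L^2(T)}$; this follows again by scaling to $\hat T$ (where the gradient of an affine function is constant, so all $L^q(\hat T)$ norms are equivalent) together with the Jacobian factor, which produces the exponent $\frac2p-1$ on the $L^p$ side and $\frac22-1=0$ on the $L^2$ side, i.e.\ the claimed $-2(\frac12-\frac1p)$. To globalize, I would sum $p$-th powers and invoke the monotonicity of discrete $\ell^q$-norms: since $p\ge 2$,
\begin{equation*}
\sum_{T\in\mathcal T_h}\|\nabla x_h\|_{L^2(T)}^p=\|(\|\nabla x_h\|_{L^2(T)})_T\|_{\ell^p}^p\le\|(\|\nabla x_h\|_{L^2(T)})_T\|_{\ell^2}^p=\|\nabla x_h\|_{L^2(\Omega)}^p.
\end{equation*}
The inclusion $\ell^2\hookrightarrow\ell^p$ for $p\ge 2$ is the step that might look uncomfortable at first because it goes ``the wrong way'' compared with function-space embeddings, but it is precisely what is needed here, and for $p=\infty$ it degenerates to the trivial bound $\max_T\|\nabla x_h\|_{L^2(T)}\le\|\nabla x_h\|_{L^2(\Omega)}$. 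Combining with the local estimate and taking the $p$-th root gives \eqref{inv_W1pToH1_global}.
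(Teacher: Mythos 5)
Your proof is correct and is precisely the standard localize--scale--sum argument that the paper relies on: it gives no proof of this proposition, deferring to the same textbook sources (Brenner--Scott, Ern--Guermond) cited for the local estimates, where the global versions are derived exactly as you do, including the $\ell^2\hookrightarrow\ell^p$ step for \eqref{inv_W1pToH1_global}. Nothing is missing; the only remark worth making is that for $\mathcal P_1$ elements the local estimate underlying \eqref{inv_W1pToH1_global} is even more elementary than your reference-element argument, since $\nabla x_h$ is constant on each triangle and the exponent just records $|T|^{\frac1p-\frac12}\sim h^{2(\frac1p-\frac12)}$.
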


We introduce $\mathcal{I}_h : C(\Omega) \to  X_h$, the standard nodal interpolation operator, such that $\mathcal{I}_h \eta(\a)=\eta(\a)$ for all $\a\in\mathcal{N}_h$. Associated with $\mathcal{I}_h$, a discrete inner product on $X_h$ is defined by
$$
(x_h,\bar x_h)_h=\int_\Omega \mathcal{I}_h(x_h(\x)\bar x_h(\x))\, \dx.
$$
We also introduce
$$
\|x_h\|_h=(x_h,x_h)_h^{\frac{1}{2}}.
$$
Local and global error bounds for $\mathcal{I}_h$ are as follows (c.f.  \cite[Thm. 4.4.4]{Brenner_Scott_2008} or \cite[Thm. 1.103]{Ern_Guermond_2004} for a proof).
\begin{proposition} Let $\Omega$ be polygonal. Consider $X_h$ to be constructed over $\mathcal{T}_h$ being quasi-uniform. Then, for each $T\in \mathcal{T}_h$, there exists $C_{\rm app}>0$, independent of $h$ and $T$, such that
\begin{equation}\label{interp_error_nodal_L1_and_W21_local}
\|\varphi-\mathcal{I}_h\varphi\|_{L^1(T)}\le C_{\rm app} h^2 \|\nabla^2\varphi\|_{L^1(T)} \quad\forall\, \varphi\in W^{2,1}(T).
\end{equation}
\end{proposition}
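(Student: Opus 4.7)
The plan is to reduce the estimate to the reference triangle by an affine transformation and then combine a Bramble--Hilbert-type argument with scaling. Let $\hat T$ be a fixed reference triangle and $F_T(\hat\x)=B_T\hat\x+b_T$ the affine bijection with $F_T(\hat T)=T$. For $\varphi \in W^{2,1}(T)$ set $\hat\varphi=\varphi\circ F_T\in W^{2,1}(\hat T)$. I would first verify that the nodal interpolant is well defined: in dimension $d=2$ the critical embedding $W^{2,1}(\hat T)\hookrightarrow C^0(\overline{\hat T})$ holds, so $\mathcal{I}_h\varphi$ makes sense, and moreover the nodal interpolation commutes with affine pullbacks, i.e.\ $\widehat{\mathcal{I}_h\varphi}=\hat{\mathcal{I}}\hat\varphi$, where $\hat{\mathcal{I}}$ is the $\mathcal{P}_1$-nodal interpolant on $\hat T$.

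Second, I would prove the estimate on $\hat T$. Since $\hat{\mathcal{I}}$ preserves affine polynomials and is continuous from $C^0(\overline{\hat T})\supset W^{2,1}(\hat T)$ into $L^1(\hat T)$, the operator $\hat\varphi\mapsto \hat\varphi-\hat{\mathcal{I}}\hat\varphi$ is bounded from $W^{2,1}(\hat T)$ into $L^1(\hat T)$ and vanishes on $\mathcal{P}_1(\hat T)$. Applying the Deny--Lions/Bramble--Hilbert lemma yields a constant $\hat C>0$, depending only on $\hat T$, such that
\begin{equation*}
\|\hat\varphi-\hat{\mathcal{I}}\hat\varphi\|_{L^1(\hat T)} \le \hat C\,\|\hat\nabla^2\hat\varphi\|_{L^1(\hat T)}
\quad\forall\,\hat\varphi\in W^{2,1}(\hat T).
\end{equation*}

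Third, I would scale back to $T$. By the standard change-of-variable formulae for affine maps,
\begin{equation*}
\|\varphi-\mathcal{I}_h\varphi\|_{L^1(T)} = |\det B_T|\,\|\hat\varphi-\hat{\mathcal{I}}\hat\varphi\|_{L^1(\hat T)},\qquad
\|\hat\nabla^2\hat\varphi\|_{L^1(\hat T)}\le \|B_T\|^2\,|\det B_T|^{-1}\|\nabla^2\varphi\|_{L^1(T)}.
\end{equation*}
Combining these bounds with the previous one gives $\|\varphi-\mathcal{I}_h\varphi\|_{L^1(T)}\le \hat C\,\|B_T\|^2\,\|\nabla^2\varphi\|_{L^1(T)}$. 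Finally, the shape regularity and quasi-uniformity in (H2) give $\|B_T\|\le C\,h_T\le C\,h$ with a constant $C$ independent of $T$ and $h$, so setting $C_{\rm app}=\hat C\,C^2$ concludes the proof.

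The main obstacle, and the only point where one has to be careful, is the well-posedness of the nodal interpolant on $W^{2,1}(T)$: in two dimensions this is the critical case of the Sobolev embedding, and one needs the sharp inclusion $W^{2,1}\hookrightarrow C^0$ to justify both the pointwise evaluations defining $\mathcal{I}_h$ and the continuity of $\hat{\mathcal{I}}$ needed for Bramble--Hilbert. Once this is in place, the remaining computations are routine scaling arguments that make no use of quasi-uniformity beyond the local bound $\|B_T\|\lesssim h$.
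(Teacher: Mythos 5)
Your proof is correct and follows essentially the same route as the paper, which does not prove this proposition itself but cites the standard references (Brenner--Scott, Thm.~4.4.4; Ern--Guermond, Thm.~1.103), whose arguments are exactly the affine-scaling plus Deny--Lions/Bramble--Hilbert reasoning you give. You also correctly isolate the one delicate point, namely the critical embedding $W^{2,1}(\hat T)\hookrightarrow C^0(\overline{\hat T})$ in two dimensions, which is what makes the nodal interpolant well defined in this borderline case.
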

\begin{proposition} Let $\Omega$ be polygonal. Consider $X_h$ to be constructed over $\mathcal{T}_h$ being quasi-uniform. Then it follows that  there exists $C_{\rm app}>0$, independent of $h$, such that
\begin{equation}\label{interp_error_nodal_H1_and_H2_global}
\|\nabla(\varphi-\mathcal{I}_h\varphi)\|_{L^2(\Omega)} \le C_{\rm  app} h \|\nabla^2\varphi\|_{L^2(\Omega)}\quad\forall \varphi\in H^2(\Omega).
\end{equation}
\end{proposition}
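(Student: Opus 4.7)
The plan is to combine a standard piecewise interpolation estimate with an elliptic regularity result on convex polygonal domains. First I would establish the local $H^1$ interpolation error by invoking the Bramble--Hilbert lemma on the reference triangle together with an affine change of variables: for each $T\in\mathcal{T}_h$ and $\varphi\in H^2(T)$,
\begin{equation*}
\|\nabla(\varphi-\mathcal{I}_h\varphi)\|_{L^2(T)}\le C\, h_T\, |\varphi|_{H^2(T)},
\end{equation*}
with $C$ depending only on the shape-regularity constant $\alpha$ from hypothesis (H2). This is exactly the companion estimate to \eqref{interp_error_nodal_L1_and_W21_local} in the $L^2$ setting, and is standard (Brenner--Scott, Ern--Guermond).

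Next I would square, sum over $T\in\mathcal{T}_h$, and use $h_T\le h$ together with the fact that $\sum_{T\in\mathcal{T}_h}|\varphi|_{H^2(T)}^2=|\varphi|_{H^2(\Omega)}^2$ for $\varphi\in H^2(\Omega)$, to obtain the global bound
\begin{equation*}
\|\nabla(\varphi-\mathcal{I}_h\varphi)\|_{L^2(\Omega)}\le C_{\rm app}\, h\, |\varphi|_{H^2(\Omega)}.
\end{equation*}

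Finally, to replace $|\varphi|_{H^2(\Omega)}$ by $\|\Delta\varphi\|_{L^2(\Omega)}$, I would appeal to the elliptic regularity identity (or equivalent norm property) announced in the notation subsection, namely that on the convex polygonal domain $\Omega$ one has $\|\varphi\|_{H^2(\Omega)}\le C\|\Delta\varphi\|_{L^2(\Omega)}$ for $\varphi\in H^2_N(\Omega)$; this is the Grisvard-type $H^2$-regularity for the Neumann Laplacian on convex polygons. Strictly speaking, the statement as written is meaningful for $\varphi$ satisfying the Neumann compatibility conditions implicit in $H^2_N(\Omega)$ (since $\|\Delta\varphi\|$ is not a norm on all of $H^2(\Omega)$), and under that reading the chain
\begin{equation*}
\|\nabla(\varphi-\mathcal{I}_h\varphi)\|_{L^2(\Omega)}\le C_{\rm app}\, h\, |\varphi|_{H^2(\Omega)}\le C_{\rm app}\, h\, \|\Delta\varphi\|_{L^2(\Omega)}
\end{equation*}
yields the claim.

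The main obstacle is the last step: the elliptic regularity $|\varphi|_{H^2(\Omega)}\le C\|\Delta\varphi\|$ requires convexity of $\Omega$ (which is ensured by hypothesis (H1)) and the homogeneous Neumann boundary condition, and is a delicate classical result rather than a purely local one. The interpolation part itself is routine once the local Bramble--Hilbert estimate is in hand.
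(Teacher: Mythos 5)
Your proposal is correct and follows the standard route: the paper itself gives no proof of this proposition, only a pointer to Brenner--Scott and Ern--Guermond, and the local Bramble--Hilbert estimate plus summation over elements is exactly what those references supply. Your final step --- invoking convexity of $\Omega$ (hypothesis (H1)) and Grisvard-type $H^2$ regularity to pass from $|\varphi|_{H^2(\Omega)}$ to $\|\Delta\varphi\|_{L^2(\Omega)}$ --- is the only non-routine ingredient, and you are right to flag that the estimate cannot hold for arbitrary $\varphi\in H^2(\Omega)$ as literally stated (e.g.\ $\varphi(x,y)=xy$ is harmonic but not reproduced by $\mathcal{I}_h$); the inequality is meaningful only on $H^2_N(\Omega)$, where $\|\Delta\cdot\|$ is an equivalent norm as the paper notes in its notation subsection, and indeed the paper only ever applies it to the Neumann problem solution $x(h)\in H^2_N(\Omega)$. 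So your reading of the statement is the correct one and your argument is complete under it.
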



\begin{corollary} Let $\Omega$ be polygonal. Consider $X_h$ to be constructed over $\mathcal{T}_h$ being quasi-uniform.  Let $n\in \mathds{N}$. Then it follows that there exist three positive constants $C_{\rm app}$, $C_{\rm com}$, and $C_{\rm sta}$, independent of $h$, such that
\begin{equation}\label{Error:I_h(x_h^n)-x_h^n_L1}
\|x_h^n-\mathcal{I}_h(x_h^n)\|_{L^1(\Omega)}\le
C_{\rm app} n(n-1)h^2 \int_\Omega |x_h(\x)|^{n-2} |\nabla x_h(\x)|^2\, \dx,
\end{equation}
\begin{equation}\label{Error: I_h(xh_bxar)-xh_bxh_L1}
\|x_h \overline x_h-\mathcal{I}_h(x_h \overline x_h)\|_{L^1(\Omega)}\le C_{\rm com} h\, \|x_h\|_{L^2(\Omega)} \, \|\nabla \overline x_h\|_{L^2(\Omega)}
\end{equation}
and
\begin{equation}\label{Stab:I_h_Ln}
\|x_h^n\|_{L^1(\Omega)}\le \|\mathcal{I}_h(x^n_h)\|_{L^1(\Omega)}\le C_{\rm sta} \|x_h^n\|_{L^1(\Omega)},
\end{equation}
where $C_{\rm sta}$ depends on $n$.
\end{corollary}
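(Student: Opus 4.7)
The plan is to establish all three estimates locally on each $T\in\mathcal{T}_h$ and then assemble them over the mesh, systematically exploiting that $x_h|_T$ and $\bar x_h|_T$ are affine, so their gradients are constant on $T$ and all higher derivatives vanish there.

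For \eqref{Error:I_h(x_h^n)-x_h^n_L1} I would apply the local interpolation bound \eqref{interp_error_nodal_L1_and_W21_local} to $\varphi=x_h^n$. Since $\nabla^2 x_h\equiv 0$ on $T$, the chain rule collapses to
\[
\nabla^2(x_h^n)=n(n-1)\,x_h^{n-2}\,\nabla x_h\otimes \nabla x_h \quad \text{on } T,
\]
yielding $|\nabla^2(x_h^n)|\le n(n-1)|x_h|^{n-2}|\nabla x_h|^2$ pointwise; summing $\|x_h^n-\mathcal{I}_h(x_h^n)\|_{L^1(T)}\le C_{\rm app}h^2\|\nabla^2(x_h^n)\|_{L^1(T)}$ over $T$ closes the bound. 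For \eqref{Error: I_h(xh_bxar)-xh_bxh_L1} the same recipe with $\varphi=x_h\bar x_h$ yields $\nabla^2(x_h\bar x_h)=\nabla x_h\otimes\nabla\bar x_h+\nabla\bar x_h\otimes\nabla x_h$ on $T$, a constant tensor, whence
\[
\|\nabla^2(x_h\bar x_h)\|_{L^1(T)}\le 2|T|\,|\nabla x_h|_T\,|\nabla\bar x_h|_T=2\,\|\nabla x_h\|_{L^2(T)}\|\nabla\bar x_h\|_{L^2(T)}.
\]
A single application of the local inverse inequality \eqref{inv_W1pToLp_local} with $p=2$ converts $\|\nabla x_h\|_{L^2(T)}$ into $C_{\rm inv}h^{-1}\|x_h\|_{L^2(T)}$, absorbing one factor of $h$ and leaving $h$ outside; a discrete Cauchy--Schwarz across the triangulation then reassembles the global product $\|x_h\|_{L^2(\Omega)}\|\nabla\bar x_h\|_{L^2(\Omega)}$.

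For \eqref{Stab:I_h_Ln} the two inequalities rest on different mechanisms. The lower bound is purely pointwise: the barycentric representation $x_h(\x)=\sum_{i=1}^{3}x_h(\a_i)\varphi_{\a_i}(\x)$ on $T$ is a convex combination ($\varphi_{\a_i}\ge 0$ form a partition of unity), so Jensen's inequality for the convex map $t\mapsto t^n$ on $[0,\infty)$ gives $x_h^n(\x)\le \mathcal{I}_h(x_h^n)(\x)$ pointwise, and integration passes this to $L^1$. The upper bound is a scaling argument: on the reference triangle $\hat T$, the maps $\hat v\mapsto \bigl(\tfrac{|\hat T|}{3}\sum_i \hat v_i^n\bigr)^{1/n}$ and $\hat v\mapsto \bigl(\int_{\hat T}(\sum_i \hat v_i\hat\varphi_i)^n\,\dx\bigr)^{1/n}$ are continuous, positively homogeneous of degree one, and strictly positive on the nonnegative cone of $\mathds{R}^3\setminus\{0\}$, hence equivalent there; since both scale identically in $|T|$ under the affine map $\hat T\to T$, the equivalence transfers to every element and summing delivers $\|\mathcal{I}_h(x_h^n)\|_{L^1(\Omega)}\le C_{\rm sta}(n)\|x_h^n\|_{L^1(\Omega)}$. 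The only delicate point is the implicit sign restriction $x_h\ge 0$ (or $n$ even), needed for Jensen and for $v_i^n\ge 0$: it is natural in the Keller--Segel context but should be flagged when invoking the corollary. Tracking the $n$-dependence of $C_{\rm sta}$ on $\hat T$ is then routine bookkeeping.
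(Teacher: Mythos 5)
Your proposal is correct, and for the first two estimates and the lower bound in \eqref{Stab:I_h_Ln} it coincides with the paper's argument: the paper likewise applies \eqref{interp_error_nodal_L1_and_W21_local} elementwise to $x_h^n$ after computing the Hessian $n(n-1)x_h^{n-2}\,\partial_i x_h\,\partial_j x_h$ on each $T$, states that \eqref{Error: I_h(xh_bxar)-xh_bxh_L1} follows by the same computation with $n=2$ (your explicit use of the local inverse inequality \eqref{inv_W1pToLp_local} to trade one factor of $h$ for $\|x_h\|_{L^2(T)}$, followed by discrete Cauchy--Schwarz over the triangulation, is exactly the intended completion of that remark), and obtains the first half of \eqref{Stab:I_h_Ln} by Jensen's inequality on the barycentric convex combination. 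Where you genuinely diverge is the upper bound $\|\mathcal{I}_h(x_h^n)\|_{L^1(\Omega)}\le C_{\rm sta}\|x_h^n\|_{L^1(\Omega)}$: the paper derives it \emph{from} \eqref{Error:I_h(x_h^n)-x_h^n_L1} via the triangle inequality, then H\"older ($\int_\Omega|x_h|^{n-2}|\nabla x_h|^2\le\|x_h\|_{L^n(\Omega)}^{n-2}\|\nabla x_h\|_{L^n(\Omega)}^2$), the global inverse inequality \eqref{inv_W1pToLp_global} with $p=n$ to cancel the $h^2$, and reverse Minkowski; you instead prove it directly by equivalence of two positively homogeneous functionals on the nodal-value cone of the reference triangle and affine scaling. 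Both are valid; your route is more self-contained and does not lean on the quasi-uniform inverse estimate, while the paper's recycles the already-established interpolation error bound. Your flagging of the implicit restriction $x_h\ge 0$ (or $n$ even) for the Jensen step and for positivity on the cone is a genuine and worthwhile observation --- the paper leaves it unstated, though every invocation of \eqref{Stab:I_h_Ln} in the sequel is indeed applied to nonnegative functions or with even $n$.
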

\begin{proof}
Let $T\in\mathcal{T}_h$ and compute
$$
\nabla^2(x_h^n)=n (n-1) x_h^{n-2}\sum_{i,j=1}^d  \partial_i x_h \partial_j x_h\quad\mbox{ on }\quad T.
$$
Then, from \eqref{interp_error_nodal_L1_and_W21_local} and the above identity, we have
$$
\begin{array}{rcl}
\|x_h^n-\mathcal{I}_h(x_h^n)\|_{L^1(T)}
&\le& C_{\rm app} h^2_K \|\nabla^2(x_h^n)\|_{L^1(T)}
\\
&\le&\displaystyle
C_{\rm app} n(n-1)h^2 \int_T |x_h(\x)|^{n-2} |\nabla x_h(\x)|^2\, \dx.
\end{array}
$$
Summing over $T\in\mathcal{T}_h$ yields \eqref{Error:I_h(x_h^n)-x_h^n_L1}. The proof of \eqref{Error: I_h(xh_bxar)-xh_bxh_L1} follows very closely the arguments of \eqref{Error:I_h(x_h^n)-x_h^n_L1} for $n=2$.  The first part of assertion \eqref{Stab:I_h_Ln} is a simple application of Jensen's inequality, whereas the second part follows from \eqref{Error:I_h(x_h^n)-x_h^n_L1} on using Hölder's inequality, \eqref{inv_W1pToLp_global} for $p=n$ and, later on, reverse Minkowski's inequality.
\end{proof}
The proof of the following proposition can be found in \cite{Chang_Yang_1988}. It is a generalization of a Moser--Trudinger-type inequality.
\begin{proposition}[Moser-Trudinger] Let $\Omega$ be polygonal with $\theta_\Omega$ being the minimum interior angle at the vertices of $\Omega$.  Then there exists a constant $C_\Omega>0$ depending on $\Omega$ such that for all $u\in H^1(\Omega)$ with $\|\nabla u\|\le 1$ and $\int_\Omega u(\x)\,\dx=1$, it follows that
\begin{equation}\label{Moser-Trudinger}
\int_\Omega e^{\alpha |u(\x)|^2} \dx\le C_\Omega,
\end{equation}
where $\alpha\le 2 \theta_\Omega$.
\end{proposition}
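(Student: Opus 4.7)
The plan is to adapt Moser's original 1971 argument for the sharp Moser--Trudinger inequality to the polygonal setting, carefully tracking how each corner degrades the critical exponent.

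First I would reduce to zero-mean functions. Writing $u = \tilde u + 1/|\Omega|$ with $\int_\Omega \tilde u\,\dx = 0$, Young's inequality yields $u(\x)^2 \le (1+\epsilon)\tilde u(\x)^2 + (1+\epsilon^{-1})|\Omega|^{-2}$, and hence
$$
\int_\Omega e^{\alpha u(\x)^2}\,\dx \le \exp\!\bigl(\alpha(1+\epsilon^{-1})|\Omega|^{-2}\bigr)\int_\Omega e^{(1+\epsilon)\alpha \tilde u(\x)^2}\,\dx.
$$
It therefore suffices to establish the exponential bound for zero-mean $\tilde u$ with $\|\nabla\tilde u\|\le 1$ and for exponent $(1+\epsilon)\alpha$ strictly below $2\theta_\Omega$, and then let $\epsilon\downarrow 0$.

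Next I would localize via a partition of unity subordinated to the vertices of $\partial\Omega$. In the interior, the classical Moser--Trudinger inequality already gives exponential integrability with constant $4\pi$, and on a smooth arc of $\partial\Omega$ the reflection trick yields $2\pi$; both are comfortably above $2\theta_\Omega$. The delicate case is a neighborhood of the vertex realizing the minimum interior angle $\theta_\Omega$. There $\Omega$ looks locally like a sector of aperture $\theta_\Omega$, which is mapped conformally onto a half-disk by $z\mapsto z^{\pi/\theta_\Omega}$. Since $\int|\nabla u|^2\,\dx$ is conformally invariant in two dimensions, the gradient constraint is preserved under this transformation, and the associated Neumann Green's function on $\Omega$ has a logarithmic singularity at that vertex whose leading coefficient is proportional to $1/\theta_\Omega$.

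The final step is Moser's distribution-function argument: represent $\tilde u$ through the Green's function, replace the gradient by its symmetric decreasing rearrangement, and invoke Moser's one-dimensional lemma to obtain a Gaussian-type bound on $|\{\x\in\Omega : |\tilde u(\x)|>t\}|$. Integration via the layer-cake formula then yields the conclusion, and the worst vertex dictates the sharp constant $2\theta_\Omega$. The main obstacle is pinning down the precise constant in the Green's function asymptotics at each polygonal vertex, since this is exactly what drives the critical exponent from $4\pi$ down to $2\theta_\Omega$; once this geometric input is secured, the remainder is a technical but largely standard adaptation of Moser's original argument, with the endpoint case $\alpha = 2\theta_\Omega$ requiring a separate limit argument along a maximizing sequence.
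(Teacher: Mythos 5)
You should first be aware that the paper does not prove this proposition at all: it is quoted as a known result, with the proof deferred to the cited reference of Chang and Yang (J.\ Differential Geom.\ 27, 1988). Your outline is essentially a reconstruction of the architecture of that reference -- conformal flattening of each corner by $z\mapsto z^{\pi/\theta}$, conformal invariance of the Dirichlet integral, the Neumann Green's function with logarithmic singularity of coefficient $1/\theta$ at a vertex of opening $\theta$, and an Adams/O'Neil-type rearrangement argument leading to the Gaussian bound on the distribution function -- so the strategy is the right one and correctly identifies why the critical exponent degrades from $4\pi$ to $2\theta_\Omega$ at the worst vertex.

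That said, two steps of your sketch would fail as written. First, the reduction to zero mean via $u^2\le(1+\epsilon)\tilde u^2+(1+\epsilon^{-1})|\Omega|^{-2}$ is incompatible with the endpoint $\alpha=2\theta_\Omega$ that the statement explicitly allows: for every $\epsilon>0$ the exponent $(1+\epsilon)\alpha$ exceeds the critical value, and the supremum of $\int_\Omega e^{(1+\epsilon)\alpha\tilde u^2}\,\dx$ over the zero-mean unit ball is then $+\infty$, so there is nothing to pass to the limit with as $\epsilon\downarrow 0$; the constant multiplier $e^{\alpha(1+\epsilon^{-1})|\Omega|^{-2}}$ simultaneously blows up. The mean shift must instead be absorbed at the level of the distribution-function estimate (a bound of the form $|\{|\tilde u|>t\}|\le Ce^{-\alpha t^2+Ct}$ survives the linear cross term $2\alpha m\tilde u$, whereas the pointwise Young splitting does not). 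Second, the partition-of-unity localization is not innocuous for an inequality with a sharp constant: replacing $u$ by $\chi_i u$ does not preserve the constraint $\|\nabla u\|\le 1$, since $\nabla(\chi_i u)$ picks up the term $u\nabla\chi_i$, and any multiplicative loss in the Dirichlet energy translates into a loss in the admissible exponent. Handling this requires either a capacity/truncation argument near each vertex or working globally with the Green's function representation from the start, which is in fact what the cited proof does. Until these two points are repaired, the argument establishes the inequality only for $\alpha$ strictly below $2\theta_\Omega$, which is weaker than the stated proposition.
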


\begin{corollary} Let $\Omega$ be polygonal with $\theta_\Omega$ being the minimum interior angle at the vertices of $\partial\Omega$. Consider $X_h$ to be constructed over $\mathcal{T}_h$ being quasi-uniform.  Let $u_h\in X_h$ with $u_h>0$. Then it follows that there exists a constant $C_{\rm MT}>0$, independent of $h$, such that
\begin{equation}\label{Moser-Trudinger-Ih}
\int_\Omega \mathcal{I}_h (e^{u_h(\x)})\,\dx\le C_\Omega (1+C_{\rm MT} \|\nabla u_h\|^2) e^{\displaystyle\frac{1}{8\theta_\Omega} \|\nabla u_h\|^2+\frac{1}{|\Omega|}\|u_h\|_{L^1(\Omega)}}.
\end{equation}
\end{corollary}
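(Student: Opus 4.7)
My plan is to pass from the interpolated quantity to the un-interpolated exponential, apply the continuous Moser--Trudinger inequality \eqref{Moser-Trudinger} after a Young-type linearization of the exponent, and finally multiply back in the mean value of $u_h$. Throughout I would use that $u_h > 0$ so that $\overline{u_h} := \frac{1}{|\Omega|}\int_\Omega u_h(\x)\,\dx = \frac{1}{|\Omega|}\|u_h\|_{L^1(\Omega)}$.

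The first step is to handle the interpolation operator. Since $\mathcal{I}_h$ is linear and nodal, and since $\mathcal{I}_h(u_h^0)=1$, one has formally $\mathcal{I}_h(e^{u_h})=\sum_{n\ge 0}\frac{1}{n!}\mathcal{I}_h(u_h^n)$, the sum converging pointwise on each triangle. Integrating term by term and invoking the error bound \eqref{Error:I_h(x_h^n)-x_h^n_L1} yields
\begin{equation*}
\int_\Omega \mathcal{I}_h(e^{u_h})\,\dx \le \int_\Omega e^{u_h}\,\dx + C_{\rm app} h^{2}\sum_{n\ge 2}\frac{1}{(n-2)!}\int_\Omega u_h^{n-2}|\nabla u_h|^2\,\dx = \int_\Omega e^{u_h}\,\dx + C_{\rm app} h^{2}\int_\Omega |\nabla u_h|^2 e^{u_h}\,\dx.
\end{equation*}
Bounding $|\nabla u_h|^2$ by $\|\nabla u_h\|_{L^\infty(\Omega)}^2$ on the right and combining the inverse estimates \eqref{inv_W1pToH1_global} and \eqref{inv_W1infToW1p_global} gives $h\,\|\nabla u_h\|_{L^\infty(\Omega)}\le C\|\nabla u_h\|$, hence the right-hand side is dominated by $(1+C_{\rm MT}\|\nabla u_h\|^2)\int_\Omega e^{u_h}\,\dx$.

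The second step is to bound $\int_\Omega e^{u_h}\,\dx$ by the continuous Moser--Trudinger inequality. Writing $u_h=\overline{u_h}+(u_h-\overline{u_h})$ and using the elementary inequality $a\le \frac{1}{4b}+b a^2$, valid for any $b>0$, with the choice $b=2\theta_\Omega/\|\nabla u_h\|^2$, yields pointwise
\begin{equation*}
u_h(\x)-\overline{u_h}\le \frac{\|\nabla u_h\|^2}{8\theta_\Omega} + \frac{2\theta_\Omega}{\|\nabla u_h\|^2}\bigl(u_h(\x)-\overline{u_h}\bigr)^2.
\end{equation*}
Applying \eqref{Moser-Trudinger} to the normalized function $w_h=(u_h-\overline{u_h})/\|\nabla u_h\|$ (which satisfies $\|\nabla w_h\|=1$ and the required zero-mean/normalization), we obtain $\int_\Omega e^{(2\theta_\Omega/\|\nabla u_h\|^2)(u_h-\overline{u_h})^2}\,\dx\le C_\Omega$, whence
\begin{equation*}
\int_\Omega e^{u_h}\,\dx \;=\; e^{\overline{u_h}}\int_\Omega e^{u_h-\overline{u_h}}\,\dx \;\le\; C_\Omega\, e^{\frac{1}{8\theta_\Omega}\|\nabla u_h\|^2 + \frac{1}{|\Omega|}\|u_h\|_{L^1(\Omega)}}.
\end{equation*}
Combining this with the first step delivers \eqref{Moser-Trudinger-Ih}.

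The main technical hurdle I anticipate is the term-by-term interpolation estimate: one has to justify summing the local bounds \eqref{Error:I_h(x_h^n)-x_h^n_L1} over $n$, which produces precisely the quantity $\int_\Omega|\nabla u_h|^2 e^{u_h}\,\dx$ needed to make contact with the $(1+C_{\rm MT}\|\nabla u_h\|^2)$ prefactor via the inverse inequality; the rest is a careful bookkeeping of constants in the Young inequality so that the exponent $1/(8\theta_\Omega)$ emerges with the correct numerical value.
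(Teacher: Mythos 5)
Your proposal is correct and follows essentially the same route as the paper: the same term-by-term expansion of $\mathcal{I}_h(e^{u_h})$ via \eqref{Error:I_h(x_h^n)-x_h^n_L1} producing the factor $\int_\Omega(1+C_{\rm app}h^2|\nabla u_h|^2)e^{u_h}\,\dx$, the same Young splitting of the exponent yielding $\frac{1}{8\theta_\Omega}\|\nabla u_h\|^2$, and the same application of \eqref{Moser-Trudinger} to the normalized function $(u_h-\overline{u_h})/\|\nabla u_h\|$ combined with an inverse inequality to turn $h^2|\nabla u_h|^2$ into $C_{\rm MT}\|\nabla u_h\|^2$. The only cosmetic difference is that you extract the gradient factor with the inverse estimate before invoking Moser--Trudinger, whereas the paper keeps it inside the integral and pulls it out at the last step.
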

\begin{proof} From \eqref{Error:I_h(x_h^n)-x_h^n_L1}, we have
\begin{equation}\label{co2.8-lab1}
\begin{array}{rcl}
\displaystyle
\int_\Omega \mathcal{I}_h (e^{u_h(\x)})\,\dx&=&\displaystyle
\int_\Omega (1+u_h(\x))\,\dx+\sum_{n=2}^\infty\frac{1}{n!} \int_\Omega\mathcal{I}_h(u_h^n(\x))\dx
\\
&\le&\displaystyle
\sum_{n=0}^\infty\frac{1}{n!} \int_\Omega u^n_h(\x)\,\dx
 \\
&&\displaystyle +\sum_{n=2}^\infty\frac{C_{\rm app} n(n-1) h^2}{n!}  \int_\Omega |\nabla u_h(\x)|^2 u_h^{n-2}(\x)\,\dx
\\
&=&\displaystyle
\int_\Omega (1+ C_{\rm app} h^2 |\nabla u_h(\x)|^2) e^{u_h(\x)}\,\dx.
\end{array}
\end{equation}
Let $\displaystyle v_h=\frac{u_h-m}{\|\nabla u_h\|}$ with $\displaystyle m=\frac{1}{|\Omega|}\int_\Omega u_h(\x) \,\dx$. Young's inequality gives
\begin{equation}\label{co2.8-lab2}
u_h=\|\nabla u_h\|v_h+ m\le \frac{1}{8 \theta_\Omega} \|\nabla u_h\|^2+ 2 \theta_\Omega |v_h|^2+ m.
\end{equation}
Thus, combining \eqref{co2.8-lab1} and \eqref{co2.8-lab2} yields, on noting  \eqref{inv_W1infToW1p_global} for $p=2$ and \eqref{Moser-Trudinger}, that
$$
\begin{array}{rcl}
\displaystyle
\int_\Omega \mathcal{I}_h (e^{u_h(\x)})\,\dx&\le&\displaystyle e^{\frac{1}{8 \theta_\Omega} \|\nabla u_h\|^2 +m}
\int_\Omega (1+ C_{\rm app} h^2 \|\nabla u_h(\x)\|^2 |\nabla v_h(\x)|^2) e^{2 \theta_\Omega |v_h(\x)|^2}\,\dx
\\
&\le&\displaystyle e^{\frac{1}{8 \theta_\Omega} \|\nabla u_h\|^2 +m}
(1+ C_{\rm app} C_{\rm inv} \|\nabla u_h(\x)\|^2 ) \int_\Omega e^{2 \theta_\Omega |v_h(\x)|^2}\,\dx
\\
&\le&\displaystyle C_\Omega(1+ C_{\rm app} C_{\rm inv} \|\nabla u_h(\x)\|^2 ) e^{\frac{1}{8 \theta_\Omega} \|\nabla u_h\|^2 +m}.
\end{array}
$$

\end{proof}
An (average) interpolation operator into $X_h$ will be required in order to properly initialize our numerical method.  We refer to \cite{Scott_Zhang_1990, Girault_Lions_2001}.
\begin{proposition} Let $\Omega$ be polygonal. Consider $X_h$ to be constructed over $\mathcal{T}_h$ being quasi-uniform. Then there exists an (average) interpolation operator  $\mathcal{Q}_h$ from $L^1(\Omega)$ to $X_h$ such that
\begin{equation}\label{Q-sta}
\|\mathcal{Q}_h \psi\|_{W^{s,p}(\Omega)}\le C_{\rm sta} \| \psi \|_{W^{s,p}(\Omega)}\quad \mbox{for }  s=0,1\mbox{ and } 1\le p\le\infty,
\end{equation}
and
\begin{equation}\label{Q-app}
\|\mathcal{Q}_h(\psi)- \psi \|_{W^{s,p} (\Omega)}\le C_{\rm app}  h^{1+m-s} \|\psi \|_{W^{m+1, p}(\Omega)} \quad \mbox{for } 0\le s\le m\le 1.
\end{equation}
\end{proposition}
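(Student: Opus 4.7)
The plan is to build $\mathcal{Q}_h$ as a Clément-type averaging interpolant, so that no trace on lower-dimensional faces is required and the operator makes sense on $L^1(\Omega)$. Concretely, for each node $\a\in\mathcal{N}_h$ I would fix one triangle $T_\a\in\mathcal{T}_h$ with $\a\in\overline{T_\a}$, let $\pi_\a:L^1(T_\a)\to \mathcal{P}_1(T_\a)$ denote the local $L^2$-projection onto affine polynomials, and set
$$
\mathcal{Q}_h\psi=\sum_{\a\in\mathcal{N}_h}(\pi_\a\psi)(\a)\,\varphi_\a.
$$
Since $\pi_\a\psi$ lies in the finite-dimensional space $\mathcal{P}_1(T_\a)$, the nodal value $(\pi_\a\psi)(\a)$ is well-defined for any $\psi\in L^1(\Omega)$, and $\mathcal{Q}_h$ is linear and continuous.

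The key structural property is polynomial invariance: since $\pi_\a$ is a projection, if $\psi\in\mathcal{P}_1(\Omega)$ then $(\pi_\a\psi)(\a)=\psi(\a)$, and the partition-of-unity property of $\{\varphi_\a\}$ forces $\mathcal{Q}_h\psi=\psi$. In particular $\mathcal{Q}_h$ reproduces constants and, locally on any macroelement, reproduces linears. To prove the stability bound \eqref{Q-sta}, I would introduce, for $T\in\mathcal{T}_h$, the patch $\omega_T=\bigcup\{T_\a:\a\ \text{a vertex of}\ T\}$ and argue on a reference triangle via Piola-type scaling. Equivalence of all norms on $\mathcal{P}_1(\widehat T)$ gives $\|\pi_\a\psi\|_{L^\infty(T_\a)}\le C\,|T_\a|^{-1/p}\|\psi\|_{L^p(T_\a)}$, and combining this with $\|\varphi_\a\|_{L^p(T)}\le C|T|^{1/p}$ and quasi-uniformity yields $\|\mathcal{Q}_h\psi\|_{L^p(T)}\le C\|\psi\|_{L^p(\omega_T)}$. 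For the $W^{1,p}$ bound, I would use the inverse estimate \eqref{inv_W1pToLp_local} applied to $\mathcal{Q}_h\psi-c$ for any constant $c$; choosing $c=(\pi_{\a_0}\psi)(\a_0)$ for some vertex $\a_0$ of $T$ and inserting the polynomial invariance gives $\|\nabla\mathcal{Q}_h\psi\|_{L^p(T)}\le C\,h^{-1}\|\mathcal{Q}_h(\psi-c)\|_{L^p(T)}\le C\|\nabla\psi\|_{L^p(\omega_T)}$ via Poincaré's inequality on the patch, since $\pi_{\a_0}(\psi-c)$ has vanishing mean.

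The approximation estimate \eqref{Q-app} then follows by the Bramble--Hilbert lemma. For $m\in\{0,1\}$, $s\le m$, and any $q\in\mathcal{P}_m(\omega_T)$,
$$
\|\mathcal{Q}_h\psi-\psi\|_{W^{s,p}(T)}=\|\mathcal{Q}_h(\psi-q)-(\psi-q)\|_{W^{s,p}(T)}\le C\sum_{j=0}^{s}h^{j-s}\|\psi-q\|_{L^p(\omega_T)}+C\|\nabla(\psi-q)\|_{L^p(\omega_T)}^{\mathrm{if}\ s=1},
$$
after using the stability bound on $\mathcal{Q}_h(\psi-q)$ and an inverse estimate to pass from $L^p$ to $W^{s,p}$ norms on the discrete part. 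Choosing $q$ to be the optimal polynomial of degree $m$ in $\omega_T$, Bramble--Hilbert gives $\|\psi-q\|_{W^{j,p}(\omega_T)}\le C h^{m+1-j}\|\psi\|_{W^{m+1,p}(\omega_T)}$, and $p$-summing over $T\in\mathcal{T}_h$ produces \eqref{Q-app}, where the finite overlap of the patches $\{\omega_T\}$ guaranteed by shape-regularity costs only a factor depending on the mesh parameters.

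The main obstacle is the $W^{1,p}$ stability, because the operator is naturally stable in $L^p$ but a priori only $h^{-1}$-stable in the gradient norm. The gain of one power of $h$ rests entirely on the exact preservation of constants by each local projection $\pi_\a$, which is what allows the constant $c$ to be subtracted before invoking the inverse inequality. Once that estimate is in place, the approximation bound is a routine application of Bramble--Hilbert, and the remainder of the proof consists of carefully tracking how the patches overlap under the quasi-uniformity assumption (H2).
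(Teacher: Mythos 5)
Your construction is sound, but note that the paper does not actually prove this proposition: it simply cites Scott--Zhang and Girault--Lions and moves on. What you have written is therefore a genuine reconstruction rather than a parallel of the paper's argument, and your choice of a Cl\'ement-type operator built from local $L^2$-projections $\pi_\a$ onto $\mathcal{P}_1(T_\a)$ is in fact the more natural one for the stated domain $L^1(\Omega)$: the original Scott--Zhang operator averages over edges and so is not defined on all of $L^1(\Omega)$, whereas your $\pi_\a$ only requires integrating $\psi$ against bounded polynomials over a triangle. The overall architecture --- polynomial invariance, local $L^p$ stability by norm equivalence on the reference element, gradient stability by subtracting a constant before invoking the inverse estimate, and Bramble--Hilbert on patches --- is the standard and correct route to \eqref{Q-sta} and \eqref{Q-app}.

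Two points deserve tightening. First, your patch $\omega_T=\bigcup\{T_\a:\a\ \text{a vertex of}\ T\}$ selects a single triangle per node, so $T\cup\omega_T$ may be connected only through isolated vertices; the Poincar\'e--Wirtinger inequality with constant $Ch$ is not uniform over such ``pinched'' unions. The standard remedy is to enlarge $\omega_T$ to the full star of $T$ (all elements sharing at least a vertex with $T$), which is edge-connected with a chain length bounded by shape regularity, so the Poincar\'e constant scales like $h$ uniformly in $T$ and $h$; your estimates only improve when the patch is enlarged. Second, your justification of the constant subtraction is slightly off: choosing $c=(\pi_{\a_0}\psi)(\a_0)$ and appealing to ``$\pi_{\a_0}(\psi-c)$ has vanishing mean'' does not directly give $\|\psi-c\|_{L^p(\omega_T)}\le Ch\|\nabla\psi\|_{L^p(\omega_T)}$; take instead $c$ to be the mean of $\psi$ over $\omega_T$ (legitimate since $\mathcal{Q}_h$ reproduces constants), or add the extra triangle inequality comparing your $c$ with that mean. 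With these repairs, and cleaning up the informal display in the approximation step, the argument is complete and matches what the cited references deliver.
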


Moreover, let $-\tilde\Delta_h$ be defined from $X_h$ to $X_h$ as
\begin{equation}\label{Discrete-Laplacian}
-(\tilde\Delta_h x_h, \bar x_h)_h=(\nabla x_h, \nabla \bar x_h)\quad \mbox{ for all } \bar x_h\in X_h,
\end{equation}
and let $x(h)\in H^2_N(\Omega)$ be such that
\begin{equation}\label{eq:x(h)}
\left\{
\begin{array}{rclcl}
-\Delta x(h)&=&-\tilde\Delta_h x_h&\mbox{ in }&\Omega,
\\
\partial_{\n}x(h)&=&0&\mbox{ on }&\partial\Omega.
\end{array}
\right.
\end{equation}
From elliptic regularity theory, the well-posedness of \eqref{eq:x(h)} is  ensured by the convexity assumption stated in $\rm (H1)$ and
\begin{equation}\label{continuity_x(h)}
\|x(h)\|_{H^2_N(\Omega)}\le C \|-\widetilde\Delta_h x_h\|.
\end{equation} See \cite{Grisvard_1985} for a proof.
\begin{proposition} Let $\Omega$ be a convex polygon. Consider $X_h$ to be constructed over $\mathcal{T}_h$ being quasi-uniform. Then there exists a constant $C_{\rm Lap}>0$, independent of $h$, such that
\begin{equation}\label{Error-x(h)}
\|\nabla(x(h)-x_h)\|_{L^2(\Omega)}\le C_{\rm Lap} h \|\tilde\Delta_h x_h\|_{L^2(\Omega)}.
\end{equation}
\end{proposition}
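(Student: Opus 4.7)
Set $f_h := -\tilde\Delta_h x_h \in X_h$, so that $x(h)$ is the weak solution of the Neumann problem for $-\Delta x(h)=f_h$; in particular,
\begin{equation*}
(\nabla x(h), \nabla \varphi) = (f_h, \varphi)\quad\forall\, \varphi\in H^1(\Omega),
\end{equation*}
while the definition \eqref{Discrete-Laplacian} of $\tilde\Delta_h$ gives
\begin{equation*}
(\nabla x_h, \nabla \bar x_h) = (f_h, \bar x_h)_h\quad\forall\, \bar x_h\in X_h.
\end{equation*}
Subtracting the two relations produces the basic inconsistency identity
\begin{equation*}
(\nabla(x(h)-x_h), \nabla \bar x_h) = (f_h,\bar x_h)-(f_h,\bar x_h)_h \quad \forall\, \bar x_h\in X_h,
\end{equation*}
whose right-hand side is exactly the quadrature error $\int_\Omega (f_h\bar x_h - \mathcal{I}_h(f_h\bar x_h))\,\dx$ coming from the lumped inner product.

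The plan is a standard Céa-type argument combined with a control of this quadrature error. First I would bound the right-hand side above by means of \eqref{Error: I_h(xh_bxar)-xh_bxh_L1}, obtaining
\begin{equation*}
|(f_h,\bar x_h)-(f_h,\bar x_h)_h| \le C_{\rm com} h\, \|f_h\|\, \|\nabla \bar x_h\|\quad \forall\, \bar x_h\in X_h.
\end{equation*}
Next I would split the error as $x(h)-x_h = (x(h)-\mathcal{I}_h x(h)) + (\mathcal{I}_h x(h)-x_h)$, which is legitimate because the elliptic regularity estimate \eqref{continuity_x(h)} places $x(h)$ in $H^2(\Omega)\hookrightarrow C^0(\overline\Omega)$ in two dimensions. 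Testing the Galerkin-type error equation with $\bar x_h := \mathcal{I}_h x(h)-x_h \in X_h$ yields
\begin{equation*}
\|\nabla(x(h)-x_h)\|^2 = \bigl(\nabla(x(h)-x_h), \nabla(x(h)-\mathcal{I}_h x(h))\bigr) + \bigl((f_h,\bar x_h)-(f_h,\bar x_h)_h\bigr).
\end{equation*}

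The first summand is controlled by Cauchy--Schwarz and the interpolation estimate \eqref{interp_error_nodal_H1_and_H2_global}, which, combined with $\|\Delta x(h)\| = \|f_h\| = \|\tilde\Delta_h x_h\|$, gives
\begin{equation*}
\|\nabla(x(h)-\mathcal{I}_h x(h))\| \le C_{\rm app}\, h\, \|\tilde\Delta_h x_h\|.
\end{equation*}
The second summand is bounded by the quadrature estimate above applied to $\bar x_h = \mathcal{I}_h x(h)-x_h$, whose $H^1$-seminorm is in turn controlled by $\|\nabla(x(h)-x_h)\|$ plus $\|\nabla(x(h)-\mathcal{I}_h x(h))\|$. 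A standard absorption via Young's inequality then yields the claimed estimate with $C_{\rm Lap}$ depending only on $C_{\rm app}$, $C_{\rm com}$ and the elliptic regularity constant in \eqref{continuity_x(h)}.

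The main technical point is not the Céa splitting (which is routine) but rather controlling the variational crime caused by the lumped inner product; this is exactly where the factor $h$ on the right-hand side originates. The estimate \eqref{Error: I_h(xh_bxar)-xh_bxh_L1} is tailor-made for this, and it dictates that the rate is $\mathcal{O}(h)$ rather than the optimal $\mathcal{O}(h^2)$ one would expect from an $H^1$-norm error estimate for a $P_1$ conforming method without mass lumping.
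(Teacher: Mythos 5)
Your argument is correct, and it follows exactly the route the paper indicates: the paper does not reproduce the proof but refers to \cite{JVGS_2018} and names \eqref{interp_error_nodal_H1_and_H2_global} and \eqref{Error: I_h(xh_bxar)-xh_bxh_L1} as the key ingredients, which are precisely the interpolation estimate and the mass-lumping quadrature bound driving your Céa-type splitting. The absorption step and the identification $\|\Delta x(h)\|=\|\tilde\Delta_h x_h\|$ are sound, so the sketch fills in the argument the paper only cites.
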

\begin{proof} We refer the reader to \cite{JVGS_2018} for a proof which uses  \eqref{interp_error_nodal_H1_and_H2_global} and  \eqref{Error: I_h(xh_bxar)-xh_bxh_L1}.
\end{proof}
\begin{corollary} Let $\Omega$ be a convex polygon. Consider $X_h$ to be constructed over $\mathcal{T}_h$ being quasi-uniform. Then, for each $p\in[2,\infty]$, there exists a constant $C_{\rm sta}>0$, independent of $h$, such that
\begin{equation}\label{stab:Discrete-Laplacian-W1p-H2-xh}
 \|\nabla x_h\|_{L^p(\Omega)} \le C_{\rm sta} \|-\widetilde\Delta_h x_h\|.
\end{equation}
\end{corollary}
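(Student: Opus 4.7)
The strategy is to compare $x_h$ with the auxiliary continuous function $x(h)$ defined by \eqref{eq:x(h)}, for which we have the strong elliptic bound \eqref{continuity_x(h)}, and then move the estimate from $L^2$ to $L^p$ through an inverse inequality applied to a finite element proxy. Concretely, I would first use the triangle inequality
\begin{equation*}
\|\nabla x_h\|_{L^p(\Omega)} \;\le\; \|\nabla(x_h-\mathcal{Q}_h x(h))\|_{L^p(\Omega)} + \|\nabla \mathcal{Q}_h x(h)\|_{L^p(\Omega)},
\end{equation*}
so that the first piece lives in $X_h$ and inverse estimates become available, while the second piece is under the control of the stability of $\mathcal{Q}_h$ and the regularity of $x(h)$.

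For the first piece, since $x_h-\mathcal{Q}_h x(h)\in X_h$, I would apply the global inverse inequality \eqref{inv_W1pToH1_global} to get a factor $h^{-2(\frac12-\frac1p)}$, and then use the triangle inequality together with \eqref{Error-x(h)} and \eqref{Q-app} (with $s=1$, $m=1$, $p=2$) to bound the resulting $L^2$ gradient norm. The error estimate \eqref{Error-x(h)} contributes $C_{\rm Lap}h\|\tilde\Delta_h x_h\|$, whereas the $\mathcal{Q}_h$-approximation contributes $C_{\rm app} h\|x(h)\|_{H^2}\le C h\|\tilde\Delta_h x_h\|$ via \eqref{continuity_x(h)}. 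The two factors of $h$ cancel the negative $h$-power from the inverse inequality, leaving the uniform bound $C h^{2/p}\|\tilde\Delta_h x_h\|\le C\|\tilde\Delta_h x_h\|$.

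For the second piece, I would use the $W^{1,p}$-stability of $\mathcal{Q}_h$ given by \eqref{Q-sta} to reduce it to $\|x(h)\|_{W^{1,p}(\Omega)}$. For $p\in[2,\infty)$, the Sobolev embedding $H^2(\Omega)\hookrightarrow W^{1,p}(\Omega)$ in two dimensions together with \eqref{continuity_x(h)} gives $\|x(h)\|_{W^{1,p}(\Omega)}\le C\|\tilde\Delta_h x_h\|$. For the endpoint $p=\infty$, where the embedding $H^2\hookrightarrow W^{1,\infty}$ fails, I would invoke the $W^{2,q}$-regularity on convex polygons from \cite{Grisvard_1985} for some $q>2$ (which depends on $\Omega$ via $\theta_\Omega$) combined with the continuous embedding $W^{2,q}(\Omega)\hookrightarrow W^{1,\infty}(\Omega)$ in two dimensions. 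Combining both bounds yields the desired inequality with a constant $C_{\rm sta}$ independent of $h$.

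The main obstacle is the endpoint $p=\infty$: the plain $H^2$-regularity of $x(h)$ is not enough to control $\|\nabla x(h)\|_{L^\infty}$, so one must squeeze a bit more regularity from the convex-polygon hypothesis in (H1). Once this ingredient is in place, the rest is a routine combination of the interpolation/approximation properties of $\mathcal{Q}_h$, the inverse inequality \eqref{inv_W1pToH1_global}, and the previously proved error estimate \eqref{Error-x(h)}.
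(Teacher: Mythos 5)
Your proposal is correct and follows essentially the same route as the paper, which uses exactly the decomposition $\|\nabla x_h\|_{L^p}\le\|\nabla(x_h-\mathcal{Q}_h x(h))\|_{L^p}+\|\nabla\mathcal{Q}_h x(h)\|_{L^p}$ followed by \eqref{inv_W1pToH1_global}, \eqref{Error-x(h)}, \eqref{continuity_x(h)}, \eqref{Q-app}, \eqref{Q-sta}, and Sobolev embedding. You are in fact more careful than the paper's one-line proof at the endpoint $p=\infty$, where the failure of $H^2(\Omega)\hookrightarrow W^{1,\infty}(\Omega)$ in two dimensions genuinely requires the extra $W^{2,q}$-regularity ($q>2$) on convex polygons that you invoke.
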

\begin{proof}
The triangle inequality gives
$$
\|\nabla x_h\|_{L^p(\Omega)}\le \|\nabla x_h-\nabla\mathcal{Q}_h x(h)\|_{L^p(\Omega)}+\|\nabla\mathcal{Q}_h x(h)\|_{L^p(\Omega)}
$$
and hence applying \eqref{inv_W1pToH1_global},  \eqref{Error-x(h)}, \eqref{continuity_x(h)}, \eqref{Q-app}, \eqref{Q-sta}, and Sobolev's inequality yields \eqref{stab:Discrete-Laplacian-W1p-H2-xh}.
\end{proof}

\section{Presentation of main result}
We now define our numerical approximation of system \eqref{KS}. Assume that $(u_0, v_0)\in H^1(\Omega)\times H^2(\Omega) $ with $u_0>0$ and $v_0\ge 0$ a. e. in $\Omega$.

We begin by approximating the initial data $(u_0, v_0)$ by $(u^0_h,v^0_h)\in X_h^2$ as follows. Define
\begin{equation}\label{def:u^0_h}
u^0_h=\mathcal{Q}_h u_0,
\end{equation}
which satisfies
\begin{equation}\label{cond:u^0_h}
u_h^0>0\mbox{ a. e. in }\Omega,\quad \|u^0_h\|_{L^1(\Omega)}\le C_{\rm sta}\|u_0\|_{L^1(\Omega)}\quad\mbox{ and }\quad \|u^0_h\|\le C_{\rm sta} \|u_0\|
\end{equation}
and
\begin{equation}\label{def:v^0_h}
v^0_h=\mathcal{Q}_h v_0,
\end{equation}
which satisfies
\begin{equation}\label{cond:v^0_h}
v_h^0\ge0\mbox{ a. e. in }\Omega,\quad \|v^0_h\|_{H^1(\Omega)}\le C  \|v_0\|_{H^1(\Omega)}\quad\mbox { and }\quad \|\widetilde\Delta_h v_h^0\|\le C \|\Delta v_0\|.
\end{equation}

Given $N\in\mathds{N}$, we let $0 = t_0 < t_1 < ... < t_{N-1} < t_N = T$ be a uniform partitioning of [0,T] with  time step $k=\frac{T}{N}$. To simplify the notation we define the time-increment operator $\delta_t\phi^{n+1}_h=\frac{\phi^{n+1}_h-\phi^n_h}{k}$.

Known $(u_h^n,v^n_h)\in X_h\times X_h$, find $(u_h^{n+1}, v_h^{n+1})\in X_h\times X_h$ such that
\begin{equation}\label{eq:u_h}
  (\delta_t u^{n+1}_h, x_h)_h+(\nabla u_h^{n+1},\nabla x_h)=(\nabla v_h^n, u_h^{n+1}\nabla x_h)
\end{equation}
and
\begin{equation}\label{eq:v_h}
(\delta_t v^{n+1}_h, x_h)_h +(\nabla v^{n+1}_h, \nabla x_h)+(v_h^{n+1}, x_h)_h=(u^{n+1}_h,x_h)_h
\end{equation}
 for all $x_h \in X_h$.

It should be noted that scheme \eqref{eq:u_h}-\eqref{eq:v_h} combines a finite element method together a mass-lumping technique to treat some terms and a semi-implicit time integrator. The resulting scheme is linear and decouples the computation of $u^{n+1}_h$ and $v^{n+1}_h$.

In order to carry out our numerical analysis we must rewrite the chemotaxis term by using a barycentric quadrature rule as follows. Let $T\in\mathcal{T}_h$ and consider $\boldsymbol{b}_T\in T $ to be the barycenter of $T$. Then let $\overline{u}^{n+1}_h $ be the interpolation of $u^{n+1}_h$ into $X_h^0$, with $X_h^0$ being
the space of all piecewise constant functions over $\mathcal{T}_h$, defined by
\begin{equation}\label{Interp_u}
\overline{u}_h^{n+1}|_T=u^{n+1}_h(\boldsymbol{b}_{T}).
\end{equation}
As a result, one has
\begin{equation}\label{eq:u_h-equiv}
(\nabla v_h^n, u_h^{n+1}\nabla x_h)=\sum_{T\in K} |T| \nabla v_h^n\cdot \nabla x_h u_h^{n+1}(\boldsymbol{b}_T)=(\nabla v_h^n, \overline{u}_h^{n+1}\nabla x_h).
\end{equation}

Let us define
\begin{equation}\label{def:energy}
\mathcal{E}_0(u_h, v_h)=\frac{1}{2}\|v_h\|^2_h+\frac{1}{2}\|\nabla v_h\|^2-(u_h, v_h)_h+(\log u_h, u_h)_h,
\end{equation}
\begin{equation}\label{def:energy_II}
\mathcal{E}_1(u_h, v_h)=\|\nabla u_h\|_h^2+\|\nabla\widetilde\Delta_h v_h\|^2.
\end{equation}
and, for each $\varepsilon,\delta\in (0,1)$,
\begin{equation}\label{def:R_0}
\mathcal{R}_0^{\varepsilon, \delta}(u^0_h,v_h^0):= \frac{1}{\delta e}+\frac{\|u_h^0\|_{L^1(\Omega)}}{\delta}\left(\frac{C_\Omega}{\varepsilon} +\varepsilon+\frac{(1+\delta)}{|\Omega|} (\|v^0_h\|_{L^1(\Omega)}+\|u^{0}_h\|_{L^1(\Omega)})\right).
\end{equation}
Associated with the above definitions, consider
$$
\mathcal{B}_0(u_h,v_h)=\frac{1}{\delta}\mathcal{E}_0(u_h, v_h)+\mathcal{R}^{\delta,\varepsilon}_0(u_h,v_h),
$$
$$
\mathcal{B}_1(u_h,v_h)=(1+\frac{1}{\delta})\mathcal{E}_0(u_h, v_h)+\mathcal{R}^{\delta,\varepsilon}_0(u_h,v_h)+2\frac{|\Omega|}{e},
$$
and
$$
\mathcal{B}_2(u_h,v_h)=\mathcal{E}_0(u_h,v_h)+\mathcal{B}_0(u_h,v_h)+\mathcal{B}_1(u_h,v_h).
$$
Finally, define
$$
\mathcal{F}(u_h, v_h)=e^{\mathcal{B}_2(u_h,v_h)+T^{\frac{1}{2}}\mathcal{B}^{\frac{1}{2}}_2(u_h,v_h)} (\mathcal{E}_0(u_h, v_h) +C T\mathcal{B}_1^3(u_h, v_h)+C T \|u_h\|_{L^1(\Omega)}),
$$

The definition of the above quantities will be apparent later.

We are now prepared to state the main result of this paper.

\begin{theorem}\label{Th:main} Assume that hypotheses $\rm (H1)$--$\rm(H3)$ are satisfied. Let $(u_0,v_0)\in H^1(\Omega)\times H^2(\Omega)$ with $u_0>0$ such that $\|u_0\|_{L^1(\Omega)}\in (0,4 \theta_\Omega)$ and $v_0\ge0$, and take $u^0_h>0$ and $v^0_h\ge0$ defined by \eqref{def:u^0_h} and \eqref{def:v^0_h}, respectively. Assume that $(h,k)$ fulfill
\begin{equation}\label{restriction-h-k}
 \frac{k}{h^2} \mathcal{F}(u_h^0,v_h^0)<  \frac{1}{2C}
\end{equation}
and
\begin{equation}\label{restriction-h}
 h^{1-\frac{2}{p}} \mathcal{F}^{\frac{1}{2}}(u_h^0,v_h^0)<\frac{C_{\rm neg}}{C}.
\end{equation}
Then the sequence $\{(u^m_h, v^m_h)\}_{m=0}^N$ computed via \eqref{eq:u_h} and \eqref{eq:v_h}  satisfies the following properties, for all $m\in\{0,\cdots, N\}$:
\begin{itemize}
\item Lower bounds:
\begin{equation}\label{Global-Lower-Bound-uh}
u^{m}_h(\x)>0
\end{equation}
and
\begin{equation}\label{Global-Lower-Bound-vh}
v^{m}_h(\x)\ge 0
\end{equation}
for all $\x\in\Omega$,
\item  $L^1(\Omega)$-bounds:
\begin{equation}\label{Global-L1-Bound-uh}
\|u^m_h\|_{L^1(\Omega)}=\|u^0_h\|_{L^1(\Omega)}
\end{equation}
and
\begin{equation}\label{Global-L1-Bound-vh}
\|v^m_h\|_{L^1(\Omega)}\le \|v^0_h\|_{L^1(\Omega)}+\|u^0_h\|_{L^1(\Omega)}.
\end{equation}
\item A discrete energy law:
\begin{equation}\label{Global-Energy-Law}
\begin{array}{rcl}
\displaystyle
\mathcal{E}_0(u^{m}_h,v^{m}_h)+k\sum_{r=1}^m(\|\delta_t v^{r}_h\|^2_h
\displaystyle
+k\|\mathcal{A}^{-\frac{1}{2}}_h(u^{r}_h)\nabla u^{r}_h-\mathcal{A}^{\frac{1}{2}}_h(u^{r}_h)\nabla v^{r-1}_h\|^2)\le\mathcal{E}_0(u^0_h,v^0_h),
\end{array}
\end{equation}
where $\mathcal{A}_h$ is defined in \eqref{def-A}.
\end{itemize}
Moreover, if we are given $h$  such that
\begin{equation}\label{restriction-h_II}
C h^{1-\frac{2}{p}} \mathcal{E}_1(u^0_h,v^0_h)\le \frac{5}{12},
\end{equation}
it follows that
\begin{equation}\label{Global-Energy-Bound}
\begin{array}{r}
\displaystyle
\mathcal{E}_1(u^{m}_h, v^{m}_h)+\frac{k}{2} \sum_{r=1}^m ( \|\nabla u^{r}_h\|^2+\|\nabla\widetilde\Delta_h v^{r}_h\|^2)\le \mathcal{F}(u^0_h, v^0_h).
\end{array}
\end{equation}
\end{theorem}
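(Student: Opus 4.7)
The overall strategy is induction on the time index $m\in\{0,\ldots,N\}$. At $m=0$ every property holds by construction of the initial data \eqref{def:u^0_h}--\eqref{cond:v^0_h}. Assuming that \eqref{Global-Lower-Bound-uh}--\eqref{Global-Energy-Bound} are valid up to step $n$, I would propagate each in turn to step $n+1$; the restrictions \eqref{restriction-h-k}--\eqref{restriction-h_II} are precisely what is needed so that the bound on $\mathcal{F}(u^0_h,v^0_h)$ furnished by the initial energy transfers to every intermediate step, keeping every constant in the chemotactic and inverse-type estimates small enough to be absorbed at the next level.

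For the lower bounds I would test \eqref{eq:u_h} nodewise with $x_h=\varphi_{\a}$ for each $\a\in\mathcal{N}_h$. The lumped inner product yields a diagonal, positive mass matrix; by \eqref{off-diagonal}--\eqref{diagonal} the stiffness term contributes a positive diagonal and nonpositive off-diagonal part, while the chemotactic contribution, rewritten through \eqref{eq:u_h-equiv} and estimated in $L^\infty$ via \eqref{inv_W1infToW1p_global} together with \eqref{stab:Discrete-Laplacian-W1p-H2-xh} applied to $\widetilde\Delta_h v^n_h$, remains strictly smaller than $C_{\rm neg}$ thanks to \eqref{restriction-h} and the inductive control of $\mathcal{E}_1(u^n_h,v^n_h)$. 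The system matrix is then an M-matrix, and since its right-hand side $(u^n_h,\varphi_{\a})_h/k$ is strictly positive by induction, $u^{n+1}_h>0$ at every node and hence on $\overline\Omega$. The proof of $v^{n+1}_h\ge0$ is analogous and simpler, there being no chemotaxis term. The $L^1$-bounds follow by choosing $x_h=1$ in both equations: this yields exact conservation for $u_h$, and for $v_h$ the elementary recurrence $(1+k)\|v^{n+1}_h\|_{L^1(\Omega)}=\|v^n_h\|_{L^1(\Omega)}+k\|u^0_h\|_{L^1(\Omega)}$ (the positivity proven just above lets us drop the absolute values), which telescopes to \eqref{Global-L1-Bound-vh}.

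The discrete energy law \eqref{Global-Energy-Law} is derived by combining three tests: \eqref{eq:v_h} against $\delta_t v^{n+1}_h$ (producing $\|\delta_t v^{n+1}_h\|^2_h$ and the telescoping parts of $\frac12\|\nabla v^{n+1}_h\|^2$ and $\frac12\|v^{n+1}_h\|^2_h$), \eqref{eq:u_h} against $\log u^{n+1}_h$ (licit because of strict positivity; convexity of $s\mapsto s\log s-s$ yields a discrete telescoping of $(\log u^{n+1}_h,u^{n+1}_h)_h$), and \eqref{eq:v_h} against $u^{n+1}_h$. The delicate point is the cancellation of the chemotactic residual: introducing the weight $\mathcal{A}_h$ of \eqref{def-A}, designed to realise the discrete chain rule $\mathcal{A}_h(u_h)\nabla\log u_h=\nabla u_h$, and completing squares, the mixed terms recombine into the nonnegative dissipation $\|\mathcal{A}_h^{-1/2}(u^{n+1}_h)\nabla u^{n+1}_h-\mathcal{A}_h^{1/2}(u^{n+1}_h)\nabla v^n_h\|^2$ visible in \eqref{Global-Energy-Law}. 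Summation on $r$ closes the identity.

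The main obstacle is estimate \eqref{Global-Energy-Bound}. My plan is to test \eqref{eq:u_h} with $u^{n+1}_h$ and \eqref{eq:v_h} with $\widetilde\Delta_h(\widetilde\Delta_h v^{n+1}_h)$, generating the telescoping quantities $\|u^{n+1}_h\|^2_h$, $\|\widetilde\Delta_h v^{n+1}_h\|^2$ together with the dissipations $\|\nabla u^{n+1}_h\|^2$ and $\|\nabla\widetilde\Delta_h v^{n+1}_h\|^2$. The hard step is the chemotactic term $(\nabla v^n_h,\overline{u}^{n+1}_h\nabla u^{n+1}_h)$: I would apply Hölder's inequality to split it, absorb $\|\nabla v^n_h\|_{L^p(\Omega)}$ by \eqref{stab:Discrete-Laplacian-W1p-H2-xh}, and bound $\|u^{n+1}_h\|_{L^q(\Omega)}$ for large $q$ by writing $u^{n+1}_h=e^{\log u^{n+1}_h}$ and applying the discrete Moser--Trudinger inequality \eqref{Moser-Trudinger-Ih} to $\log u^{n+1}_h$. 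The exponential moment is then controlled in terms of $\|\nabla\log u^{n+1}_h\|^2$ and $\|\log u^{n+1}_h\|_{L^1(\Omega)}$, which by \eqref{Global-Energy-Law} and \eqref{Global-L1-Bound-uh} are already bounded by $\mathcal{B}_1(u^0_h,v^0_h)$ -- this is exactly why $\mathcal{R}_0^{\varepsilon,\delta}$ and $\mathcal{B}_0,\mathcal{B}_1,\mathcal{B}_2$ are defined the way they are. The remaining small $h$-dependent term is absorbed on the left using \eqref{inv_W1pToH1_global} together with \eqref{restriction-h_II}, and a discrete Gronwall argument produces \eqref{Global-Energy-Bound} with the explicit form of $\mathcal{F}$. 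A final verification that $\mathcal{F}(u^{n+1}_h,v^{n+1}_h)\le \mathcal{F}(u^0_h,v^0_h)$ closes the inductive loop and, through \eqref{restriction-h-k}--\eqref{restriction-h_II}, ensures that none of the preceding arguments degrade at step $n+2$.
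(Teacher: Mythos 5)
Your treatment of the lower bounds, the $L^1$-bounds, and the discrete energy law is essentially sound and close to the paper's: the paper establishes positivity by splitting $u^{n+1}_h$ into its negative and positive nodal parts and testing with the negative part rather than by invoking M-matrix theory, but both routes rest on the same sign estimates \eqref{off-diagonal}--\eqref{diagonal} combined with \eqref{restriction-h} to control the chemotactic perturbation of the off-diagonal entries, and your identification of $\mathcal{A}_h$ as the discrete chain-rule weight $\nabla\mathcal{I}_h\log u^{n+1}_h=\mathcal{A}_h^{-1}(u^{n+1}_h)\nabla u^{n+1}_h$ is exactly the mechanism behind \eqref{Local-Energy-Law}.

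The genuine gap is in your derivation of \eqref{Global-Energy-Bound}. You propose to bound the chemotactic term $(\nabla v^n_h,\overline{u}^{n+1}_h\nabla u^{n+1}_h)$ by H\"older, controlling $\|u^{n+1}_h\|_{L^q(\Omega)}$ through the Moser--Trudinger inequality applied to $\log u^{n+1}_h$, with the exponential moment "controlled in terms of $\|\nabla\log u^{n+1}_h\|^2$ and $\|\log u^{n+1}_h\|_{L^1(\Omega)}$." Neither quantity is available: the energy law \eqref{Global-Energy-Law} dissipates the \emph{weighted} quantity $\|\mathcal{A}^{-\frac12}_h(u^r_h)\nabla u^r_h-\mathcal{A}^{\frac12}_h(u^r_h)\nabla v^{r-1}_h\|^2$ (morally $\int u|\nabla\log u-\nabla v|^2$), not $\|\nabla\log u_h\|^2$, and the entropy bound \eqref{control_of_(logu,u)_h} controls $\|u_h\log u_h\|_{L^1(\Omega)}$, not $\|\log u_h\|_{L^1(\Omega)}$, which can be arbitrarily large where $u_h$ is small. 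Moreover, even granting an $L^q$ bound, a direct H\"older splitting leaves a prefactor $\|\nabla v^n_h\|_{L^p(\Omega)}\|u^{n+1}_h\|_{L^q(\Omega)}$ that is $O(1)$ at best and cannot be absorbed into $k\|\nabla u^{n+1}_h\|^2$; this term is critical, not subcritical. The paper's actual mechanism is structural: using the barycentric identity it rewrites $\overline{u}^{n+1}_h\nabla u^{n+1}_h$ as $\tfrac12\nabla\mathcal{I}_h(u^{n+1}_h)^2$ plus $O(h)$ corrections, substitutes $x_h=\tfrac12\mathcal{I}_h(u^{n+1}_h)^2$ into \eqref{eq:v_h} so that the leading part becomes $-\tfrac12(v^{n+1}_h,\mathcal{I}_h(u^{n+1}_h)^2)_h\le0$ plus $\tfrac12(u^{n+1}_h,\mathcal{I}_h(u^{n+1}_h)^2)_h\sim\|u^{n+1}_h\|^3_{L^3(\Omega)}$, and then controls the cubic term by the logarithmic interpolation inequality of Nagai--Senba--Yoshida, which yields a factor $\gamma^3\|u^{n+1}_h\log u^{n+1}_h\|_{L^1(\Omega)}\|\nabla u^{n+1}_h\|^2$ with an adjustable small constant $\gamma$ — this is where the entropy bound and the Moser--Trudinger inequality (applied to $(1+\delta)v^{n+1}_h$, not to $\log u^{n+1}_h$) actually enter. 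Without this decomposition and the logarithmic interpolation inequality, the Gr\"onwall argument does not close.
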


\begin{remark} 

The constant $C$ is not easy to compute in practice. Hence \eqref{restriction-h-k} and \eqref{restriction-h} should only be seen as theoretical conditions, meaning that $k/h^2$ and $h^{2-\frac{1}{p}}$ have to be sufficiently small to reach \eqref{Global-Lower-Bound-uh} and \eqref{Global-Lower-Bound-vh} on $(0,T]$.
\end{remark}

As system \eqref{eq:u_h}-\eqref{eq:v_h} is linear, existence follows from uniqueness. The latter is an immediate outcome of  \emph{a priori} bounds for $ (u^{n+1}_h, v^{n+1}_h)$.

\section{Proof of main result}

In this section we address the proof of Theorem \ref{Th:main}. Rather than prove \emph{en masse} the estimates in Theorem \ref{Th:main}, because all of them are connected, we have divided the proof into various subsections for the sake of clarity. The final argument will be an induction procedure on $n$ relied on the semi-explicit time discretization employed in \eqref{eq:u_h}.
\subsection{Lower bounds and a discrete energy law} We first demonstrate lower bounds for $(u^{n+1}_h, v^{n+1}_h)$ and, as a consequence of this, a discrete local-in-time energy law is established.
\begin{lemma}[Lower bounds]\label{lm:Lower_Bounds} Assume that $\rm (H1)$--$\rm(H3)$ are satisfied. Let $u^n_h>0$ and $v^n_h\ge0$ and let
\begin{equation}\label{hyp:induction}
 \|\widetilde\Delta_h v^n_h\|^2\le \mathcal{F}(u_h^0,v_h^0).
\end{equation}
Then if one chooses $(h,k)$ satisfying \eqref{restriction-h-k} and \eqref{restriction-h}, it follows that the solution $(u^{n+1}_h, v^{n+1}_h)\in X_h^2$ computed via \eqref{eq:u_h} and \eqref{eq:v_h} are lower bounded, i.e, for all $\x\in\Omega$,
\begin{equation}\label{positivity-uh}
u^{n+1}_h(\x)>0
\end{equation}
and
\begin{equation}\label{positivity-vh}
v^{n+1}_h(\x)\ge0.
\end{equation}
\end{lemma}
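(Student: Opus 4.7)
The overall plan is to decouple the two lower-bound statements in the natural order imposed by the scheme: since \eqref{eq:v_h} depends on $u_h^{n+1}$ but \eqref{eq:u_h} does not depend on $v_h^{n+1}$, I would first prove \eqref{positivity-uh} and then deduce \eqref{positivity-vh}. Because the scheme uses mass-lumping and the nodal basis $\{\varphi_{\boldsymbol a}\}$, both equations become linear algebraic systems for the nodal values; the strategy is to show that the coefficient matrix is an irreducible $M$-matrix and that the right-hand side is (strictly) non-negative, so that monotonicity yields the sign of the solution.

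For \eqref{eq:u_h}, inserting the barycentric rewriting \eqref{eq:u_h-equiv} and testing against $\varphi_{\boldsymbol a}$, the resulting system matrix $\mathcal M$ for the nodal vector has entries
$\mathcal M_{\boldsymbol a\boldsymbol b} = \frac{m_{\boldsymbol a}}{k}\delta_{\boldsymbol a\boldsymbol b}+A_{\boldsymbol a\boldsymbol b}-B_{\boldsymbol a\boldsymbol b}$,
with $A_{\boldsymbol a\boldsymbol b}=(\nabla\varphi_{\boldsymbol b},\nabla\varphi_{\boldsymbol a})$, $m_{\boldsymbol a}=\int_\Omega\varphi_{\boldsymbol a}\,\dx$, and $B_{\boldsymbol a\boldsymbol b}=\tfrac13\sum_{T\ni\boldsymbol a,\boldsymbol b}|T|(\nabla v_h^n\cdot\nabla\varphi_{\boldsymbol a})|_T$. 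For the off-diagonals, \eqref{off-diagonal} gives $A_{\boldsymbol a\boldsymbol b}\le -C_{\rm neg}$ whenever $\boldsymbol b\neq\boldsymbol a$ is a neighbor, while a crude bound $|B_{\boldsymbol a\boldsymbol b}|\le C h^{2}\cdot h^{-1}\cdot\|\nabla v_h^n\|_{L^\infty(\omega_{\boldsymbol a})}$, combined with the global inverse estimate \eqref{inv_W1infToW1p_global} and the stability bound \eqref{stab:Discrete-Laplacian-W1p-H2-xh}, yields $|B_{\boldsymbol a\boldsymbol b}|\le C h^{1-2/p}\|\widetilde\Delta_h v_h^n\|\le C h^{1-2/p}\mathcal F^{1/2}(u_h^0,v_h^0)$ via the induction hypothesis \eqref{hyp:induction}. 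Hence, under \eqref{restriction-h}, one obtains $\mathcal M_{\boldsymbol a\boldsymbol b}<0$ for every neighboring pair. For the row sums I would use $\sum_{\boldsymbol b}A_{\boldsymbol a\boldsymbol b}=0$ (partition of unity) and $\sum_{\boldsymbol b}B_{\boldsymbol a\boldsymbol b}=(\nabla v_h^n,\nabla\varphi_{\boldsymbol a})=-m_{\boldsymbol a}(\widetilde\Delta_h v_h^n)(\boldsymbol a)$ by \eqref{Discrete-Laplacian}, so that the total row sum is $m_{\boldsymbol a}\bigl(\tfrac1k+(\widetilde\Delta_h v_h^n)(\boldsymbol a)\bigr)$, which is strictly positive by \eqref{inv_LinfToL2_global} applied to $\widetilde\Delta_h v_h^n$, together with \eqref{hyp:induction} and \eqref{restriction-h-k}. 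Thus $\mathcal M$ is an irreducible, strictly diagonally dominant $M$-matrix, so $\mathcal M^{-1}$ has strictly positive entries. Since the right-hand side is $\frac{m_{\boldsymbol a}}{k}u_h^n(\boldsymbol a)>0$ by the inductive hypothesis $u_h^n>0$, all nodal values of $u_h^{n+1}$ are strictly positive, and by linearity on each triangle \eqref{positivity-uh} follows everywhere in $\Omega$.

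Once \eqref{positivity-uh} is available, \eqref{positivity-vh} is much easier. The system \eqref{eq:v_h} for the nodal values of $v_h^{n+1}$ has matrix $m_{\boldsymbol a}\bigl(\tfrac1k+1\bigr)\delta_{\boldsymbol a\boldsymbol b}+A_{\boldsymbol a\boldsymbol b}$, whose off-diagonals are nonpositive by \eqref{off-diagonal} alone (no smallness on $h$ is required here), whose diagonal is positive, and whose row sums equal $m_{\boldsymbol a}(1/k+1)>0$; so it is an irreducible $M$-matrix unconditionally. The right-hand side is $m_{\boldsymbol a}\bigl(u_h^{n+1}(\boldsymbol a)+\tfrac1k v_h^n(\boldsymbol a)\bigr)$, which is nonnegative by \eqref{positivity-uh} and the hypothesis $v_h^n\ge 0$, giving \eqref{positivity-vh}.

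The main obstacle is the off-diagonal bound in the matrix for $u_h^{n+1}$, where the chemotaxis contribution must be dominated by the (negative) stiffness entries uniformly in the mesh; this is exactly where the acute-triangulation estimate \eqref{off-diagonal} together with the inductive control of $\|\widetilde\Delta_h v_h^n\|$ under the stepsize restrictions \eqref{restriction-h-k}--\eqref{restriction-h} plays the decisive role. Strict positivity (rather than mere nonnegativity) of $u_h^{n+1}$ is then ensured by irreducibility of $\mathcal M$, inherited from the connectedness of the mesh.
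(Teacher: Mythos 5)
Your proposal is correct, but it takes a genuinely different route from the paper. The paper does not assemble the linear system and invoke $M$-matrix theory; instead it argues variationally: it splits $u^{n+1}_h=u^{\rm min}_h+u^{\rm max}_h$ into its negative and positive nodal parts, tests \eqref{eq:u_h} with $u^{\rm min}_h$, uses the sign structure of the combined stiffness-plus-chemotaxis entries (exactly your off-diagonal estimate, obtained from \eqref{off-diagonal}, \eqref{inv_W1pToLp_local}, \eqref{inv_W1infToW1p_local}, \eqref{stab:Discrete-Laplacian-W1p-H2-xh}, \eqref{hyp:induction} and \eqref{restriction-h}) to discard the cross terms, and then absorbs the remaining chemotaxis contribution with \eqref{inv_LinfToL2_global} and \eqref{restriction-h-k} to conclude $\|u^{\rm min}_h\|_h^2\le 0$, i.e.\ $u^{n+1}_h\ge 0$; strict positivity is then obtained by a separate contradiction argument at a node where $u^{n+1}_h$ would vanish. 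Your irreducible-$M$-matrix argument packages nonnegativity and strict positivity in one stroke and makes the role of mesh connectivity transparent, which is arguably cleaner; the paper's truncation argument avoids any explicit matrix assembly and reuses machinery (testing with nonlinear functions of the unknown) that reappears in the energy-law lemma. The shared decisive ingredient is identical in both: the acute-mesh bound \eqref{off-diagonal} dominating the perturbation $Ch^{1-2/p}\mathcal F^{1/2}(u_h^0,v_h^0)$ under \eqref{restriction-h}. One point you should spell out: your claim that the row sum $m_{\boldsymbol a}\bigl(\tfrac1k+(\widetilde\Delta_h v^n_h)(\boldsymbol a)\bigr)$ is positive does not follow verbatim from \eqref{restriction-h-k}; you need the short chain $k\,\|\widetilde\Delta_h v^n_h\|_{L^\infty(\Omega)}\le C_{\rm inv}\tfrac{k}{h}\mathcal F^{\frac12}(u_h^0,v_h^0)=C_{\rm inv}\bigl(k\cdot\tfrac{k}{h^2}\mathcal F(u_h^0,v_h^0)\bigr)^{\frac12}\le C_{\rm inv}\bigl(\tfrac{T}{2C}\bigr)^{\frac12}<1$, which holds only because the generic constant $C$ in \eqref{restriction-h-k} may be taken large enough (this is consistent with how the paper uses its generic constants, so it is a matter of exposition rather than a gap). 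Your treatment of \eqref{positivity-vh} coincides with what the paper leaves as ``a simple matter.''
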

\begin{proof} Since $u_h^{n+1}$ and $v_h^{n+1}$ are piecewise linear polynomial functions, it will suffice to prove that \eqref{positivity-uh} and \eqref{positivity-vh} hold at the nodes. To do this, let $T\in\mathcal{T}_h$ be a fixed triangle with vertices $\{\a_1, \a_2,\a_3\}$, and choose two of them, i.e. $\a_i,\a_j\in T$ with $i\not=j$. Then, from \eqref{inv_W1pToLp_local} for $p=\infty$, \eqref{inv_W1infToW1p_local}, \eqref{stab:Discrete-Laplacian-W1p-H2-xh}, and \eqref{hyp:induction}, we have on noting \eqref{eq:u_h-equiv}   that
\begin{equation}\label{lm5.1-lab1}
\begin{array}{rcl}
\displaystyle
\int_T \overline{\varphi}_{\a_i}\nabla v^n_h\cdot \nabla \varphi_{\a_j}\, \dx&=&\displaystyle
\int_T\varphi_{\a_i}(\boldsymbol{b}_T)\nabla v_h^n\cdot \nabla\varphi_{\a_j}\dx
\\
&\le&|T| \|\varphi_{\a_i}\|_{L^\infty(T)} \|\nabla v_h^n\|_{L^\infty(T)} \|\nabla\varphi_{\a_j}\|_{L^\infty(T)}
\\
&\le& C h \|\nabla v^n_h\|_{L^{\infty}(T)}\le C h^{1-\frac{2}{p}} \|\nabla v^n_h\|_{L^p(T)}
\\
&\le& C h^{1-\frac{2}{p}} \|\widetilde\Delta_h v^n_h\|_{L^2(\Omega)}\le C h^{1-\frac{2}{p}} \mathcal{F}^{\frac{1}{2}}(u_h^0,v_h^0).
\end{array}
\end{equation}
If we now compare \eqref{off-diagonal} with \eqref{lm5.1-lab1}, we find on recalling \eqref{restriction-h}  that
\begin{align*}
\int_T\nabla \varphi_{\a_i}\cdot\nabla \varphi_{\a_j}\dx &- \int_T\overline{\varphi}_{\a_i} \nabla v_h^n\cdot\nabla\varphi_{\a_j}\dx
\\
&\le -C_{\rm neg} + C h^{1-\frac{2}{p}} \mathcal{F}^{\frac{1}{2}}(u_h^0,v_h^0)<0
\end{align*}
and on summing over $T\in {\rm supp }\, \varphi_{\a_i}\cap {\rm supp }\, \varphi_{\a_j}  $ that
\begin{equation}\label{lm5.1-lab2}
(\nabla \varphi_{\a_i}, \nabla \varphi_{\a_j}) - (\overline{\varphi}_{\a_i}\nabla v^n_h,\nabla\varphi_{\a_j})<0.
\end{equation}
Analogously, we have, from \eqref{diagonal}, that
\begin{equation}\label{lm5.1-lab3}
(\nabla \varphi_{\a_i}, \nabla \varphi_{\a_i}) - (\overline{\varphi}_{\a_i} \nabla v^n_h,\nabla\varphi_{\a_i})>0
\end{equation}
holds under assumption \eqref{restriction-h}.

Let $u_{h}^{\rm min}\in X_h$ be defined as
$$
u_{h}^{\rm min}=\sum_{\a\in\mathcal{N}_h} u_{h}^{-}(\a)\varphi_\a,
$$
where $u_{h}^{-}(\a)=\min\{0, u^{n+1}_{h}(\a)\}$. Analogously, one defines $u^{\rm max}_h\in
X_h$ as
$$
u_{h}^{\rm max}=\sum_{\a\in\mathcal{N}_h} u_{h}^{+}(\a)\varphi_\a,$$
where $u_{h}^{+}(\a)=\max\{0, u^{n+1}_{h}(\a)\}$. It is easy to check that  $u^{n+1}_{h}=u_{h}^{\rm min}+ u_{h}^{\rm max}$.

Set $\bar x_h= u^{\rm min}_h$ in \eqref{eq:u_h} using \eqref{eq:u_h-equiv} to get
\begin{equation}\label{lm5.1-lab4}
(\delta_t u^{n+1}_h, u^{\rm min}_h)_h+ (\nabla u^{n+1}_h,\nabla u^{\rm min}_h)
-(\overline{u}^{n+1}_h\nabla v^n_h,\nabla u^{\rm min}_h)=0.
\end{equation}
Our goal is to show that $u_h^{\rm min}\equiv0$. Indeed, note that
$$
(u^{n+1}_h, u^{\rm min}_h)_h=(u^{\rm min}_h+u^{\rm max}_h, u^{\rm min}_h)_h=\|u^{\rm min}_h\|_h^2.
$$
and hence
\begin{equation}\label{lm5.1-lab6}
(\delta_t u^{n+1}_h, u^{\rm min}_h)_h= \frac{1}{k}\| u_h^{\rm min}\|_h^2-\frac{1}{k}(u^n_h, u^{\rm min}_h)\ge \|u_h^{\rm min}\|_h^2,
\end{equation}
where we have used the fact that $u^n_h>0$. One can further write
\begin{align*}
(\nabla u^{n+1}_h,\nabla u^{\rm min}_h)&-(\overline{u}^{n+1}_h\nabla v^n_h,\nabla u^{\rm min}_h )
\\
=&(\nabla u^{\rm max}_h,\nabla u^{\rm min}_h)-(\overline{u}^{\rm max}_h\nabla v^n_h,\nabla u^{\rm min}_h )
\\
&+(\nabla u^{\rm min}_h,\nabla u^{\rm min}_h)-(\overline{u}^{\rm min}_h\nabla v^n_h,\nabla u^{\rm min}_h),
\end{align*}
whereupon we deduce from \eqref{lm5.1-lab2} and \eqref{lm5.1-lab3}  that
\begin{align*}
\nonumber
(\nabla u^{\rm max}_h,&\nabla u^{\rm min}_h)-( \overline{u}^{\rm max}_h\nabla v^n_h,\nabla u^{\rm min}_h )
\\
=&\displaystyle
\sum_{\a\not=\tilde\a\in\mathcal{N}_h} u^{\rm max}_h(\a) u^{\rm min}_h(\tilde\a)\Big[(\nabla\varphi_\a,\nabla\varphi_{\tilde \a} )-(\overline{\varphi}_\a\nabla v^n_h,\nabla \varphi_{\tilde \a})\Big]
\\\nonumber&\displaystyle
+\sum_{\a\in\mathcal{N}_h} u^{\rm max}_h(\a) u^{\rm min}_h(\a)\Big[ (\nabla\varphi_\a,\nabla\varphi_{\a})-(\overline{\varphi}_\a\nabla v^n_h,\nabla \varphi_{\a})\Big]\ge0,
\end{align*}
since $u^{\rm max}_h(\a) u^{\rm min}_h(\tilde\a)\le0$ and $u^{\rm max}_h(\a) u^{\rm min}_h(\a)=0$. Therefore,
\begin{equation}\label{lm5.1-lab8}
(\nabla u^{\rm min}_h,\nabla u^{\rm min}_h)-(\overline{u}^{\rm min}_h\nabla v^n_h,\nabla u^{\rm min}_h )\le(\nabla u^{n+1}_h,\nabla u^{\rm min}_h)
-( \overline{u}^{n+1}_h\nabla v^n_h,\nabla u^{\rm min}_h )
\end{equation}
As a result, we infer on applying \eqref{lm5.1-lab6} and \eqref{lm5.1-lab8}  into \eqref{lm5.1-lab4}  that
$$
\|u^{\rm min}_h\|^2_h+k \|\nabla u^{\rm min}_h\|^2\le k (\overline{u}^{\rm min}_h\nabla v^n_h,\nabla u^{\rm min}_h).
$$
But inequalities \eqref{inv_LinfToL2_global}, \eqref{Stab:I_h_Ln} for $n=2$, \eqref{stab:Discrete-Laplacian-W1p-H2-xh} for $p=2$, \eqref{hyp:induction}, and \eqref{restriction-h-k} allow us to estimate
$$
\begin{array}{rcl}
\| u^{\rm min}_h\|^2_h+k \|\nabla u^{\rm min}_h\|^2&\le&  k \|u^{\rm min}_h\|_{L^\infty(\Omega)} \|\nabla v^n_h\|_{L^2(\Omega)} \|\nabla u^{\rm min}_h\|^2
\\
&\le&\displaystyle
C \frac{k}{h^2}\mathcal{F}(u_h^0,v_h^0)\|u^{\rm min}_h\|^2_h+\frac{k}{2} \|\nabla u^{\rm min}_h\|^2
\\
&\le&\displaystyle
\frac{1}{2}  \|u^{\rm min}_h\|^2_h +\frac{k}{2} \|\nabla u^{\rm min}_h\|^2;
\end{array}
$$
thereby
$$
\|u_h^{\rm min}\|_h^2\le0,
$$
which, in turn,  implies that $u^{\rm min}_h\equiv0$ and hence  $u^{n+1}_h\ge 0$. It remains to prove that indeed $u^{n+1}_h>0$. We proceed by contradiction. Let $\widetilde\a\in \mathcal{N}_h$ be such $u^{n+1}_h(\widetilde\a)=0$. Substitute  $\bar x_h=\varphi_{\widetilde\a}$ into \eqref{eq:u_h} to arrive at
$$
\begin{array}{rcl}
\displaystyle
0<u^{n}_h(\widetilde\a)\int_\Omega \varphi_{\widetilde\a}&=&(\nabla u_h^{n+1},\nabla \varphi_{\widetilde\a} )-(\nabla v_h^n, \overline{u}_h^{n+1}\nabla \varphi_{\widetilde\a})
\\
&=&\displaystyle
\sum_{\widetilde\a\not=\a\in\mathcal{N}_h} u^{n+1}_h(\a) \Big\{  (\nabla \varphi_\a, \nabla\varphi_{\widetilde\a})-(\overline{\varphi}_\a\nabla v^n_h, \nabla\varphi_{\widetilde \a})\Big\}\le0.
\end{array}
$$
In the last line we have utilized  \eqref{lm5.1-lab2} and the fact that $u^{n+1}_h\ge 0$. This gives a contradiction.

It is now a simple matter to show that \eqref{positivity-vh} holds. It completes the proof.
\end{proof}

We are now concerned with obtaining $L^1(\Omega)$ bounds for $(u^{n+1}_h, v^{n+1}_h)$. In particular, we will see that equation \eqref{eq:u_h} is mass-preserving.
\begin{lemma}[$L^1(\Omega)$-bounds] Under the conditions of Lemma \ref{lm:Lower_Bounds}, the discrete solution pair $(u^{n+1}_h, v_h^{n+1})\in X_h^2$ computed via \eqref{eq:u_h} and \eqref{eq:v_h} fulfills
\begin{equation}\label{Local-L1-Bound-uh}
\|u^{n+1}_h\|_{L^1(\Omega)}=\|u_h^0\|_{L^1(\Omega)}
\end{equation}
and
\begin{equation}\label{Local-L1-Bound-vh}
\|v^{n+1}_h\|_{L^1(\Omega)}\le \|v^0_h\|_{L^1(\Omega)}+\|u^0_h\|_{L^1(\Omega)}.
\end{equation}
\end{lemma}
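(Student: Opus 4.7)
The plan is to test both \eqref{eq:u_h} and \eqref{eq:v_h} with the constant function $x_h\equiv 1$, which belongs to $X_h$, and to exploit $\nabla 1=0$ together with the lower bounds already established in Lemma \ref{lm:Lower_Bounds}. The overall structure is an induction on $n$ that is made explicit in the statement: the lemma is applied within the global induction of Theorem \ref{Th:main}, so one may assume the $L^1$ identity/estimate already holds at step $n$.

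The key preliminary observation is that for every $w_h\in X_h$ one has $(w_h,1)_h=\int_\Omega\mathcal{I}_h(w_h)\dx=\int_\Omega w_h\dx$, because $\mathcal{I}_h$ acts as the identity on $X_h$. Setting $x_h=1$ in \eqref{eq:u_h} kills both $(\nabla u_h^{n+1},\nabla x_h)$ and $(\nabla v_h^n,u_h^{n+1}\nabla x_h)$, and yields the discrete conservation identity
\begin{equation*}
\int_\Omega u_h^{n+1}\dx=\int_\Omega u_h^n\dx.
\end{equation*}
By Lemma \ref{lm:Lower_Bounds}, $u_h^{n+1}>0$, and by hypothesis $u_h^n>0$, so these integrals coincide with the respective $L^1(\Omega)$-norms; iterating from $n=0$ gives \eqref{Local-L1-Bound-uh}.

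Next I would set $x_h=1$ in \eqref{eq:v_h}. The term $(\nabla v_h^{n+1},\nabla 1)$ vanishes and the remaining terms collapse via the same identity to
\begin{equation*}
\int_\Omega\delta_t v_h^{n+1}\dx+\int_\Omega v_h^{n+1}\dx=\int_\Omega u_h^{n+1}\dx,
\end{equation*}
i.e.\ $(1+k)\int_\Omega v_h^{n+1}\dx=\int_\Omega v_h^n\dx+k\int_\Omega u_h^{n+1}\dx$. Invoking $v_h^{n+1}\ge 0$, $v_h^n\ge 0$, and $u_h^{n+1}>0$ turns integrals into $L^1$-norms. Using \eqref{Local-L1-Bound-uh} and the inductive hypothesis $\|v_h^n\|_{L^1(\Omega)}\le\|v_h^0\|_{L^1(\Omega)}+\|u_h^0\|_{L^1(\Omega)}$ produces
\begin{equation*}
(1+k)\|v_h^{n+1}\|_{L^1(\Omega)}\le\|v_h^0\|_{L^1(\Omega)}+(1+k)\|u_h^0\|_{L^1(\Omega)},
\end{equation*}
whence $\|v_h^{n+1}\|_{L^1(\Omega)}\le\frac{1}{1+k}\|v_h^0\|_{L^1(\Omega)}+\|u_h^0\|_{L^1(\Omega)}$, which is slightly stronger than \eqref{Local-L1-Bound-vh}.

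No genuine obstacle arises: everything is algebraic once the constant test function is used. The only point worth highlighting is the exact compatibility of the mass-lumped inner product with integration against constants, which is why the semi-implicit treatment of the reaction term $(v_h^{n+1},x_h)_h$ and the source $(u_h^{n+1},x_h)_h$ do not spoil mass conservation in \eqref{eq:u_h} and leave a sharp linear ODE-type identity in \eqref{eq:v_h}.
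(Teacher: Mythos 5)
Your proof is correct and follows essentially the same route as the paper: test both equations with $x_h\equiv 1$, use that the lumped product against constants reduces to the plain integral, and convert integrals to $L^1$-norms via the lower bounds. The only cosmetic difference is that the paper solves the resulting recursion $(1+k)\int v_h^{n+1}=\int v_h^n+k\int u_h^{n+1}$ in closed form and bounds the geometric sum, whereas you close it by a one-step induction, which yields the same (in fact marginally sharper) estimate.
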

\begin{proof} On choosing $x_h=1$ in \eqref{eq:u_h}, it follows immediately after a telescoping cancellation that
\begin{equation}\label{lm5.2-lab1}
\int_\Omega u^{n+1}_h(\x)\,\dx=\int_\Omega u^0_h(\x)\,\dx.
\end{equation}
Consequently, we get that \eqref{Local-L1-Bound-uh} holds from \eqref{positivity-uh} and \eqref{cond:u^0_h}. Now let $x_h=1$ in \eqref{eq:v_h} to get
$$
\int_\Omega v^{n+1}_h(\x)\,\dx+k \int_\Omega v^{n+1}_h(\x)\,\dx=\int_\Omega v^n_h(\x)\,\dx+k\int_\Omega u^{n+1}_h(\x)\,\dx.
$$
A simple calculation shows that
$$
\int_\Omega v^{n+1}_h(\x)\,\dx=\frac{1}{(1+k)^{n+1}} \int_\Omega v^0_h(\x)\,\dx+\left(\int_\Omega u^0_h(\x)\,\dx\right) \sum_{j=1}^{n+1}\frac{k}{(1+k)^j} ,
$$
where we have used \eqref{lm5.2-lab1}. Inequality \eqref{Local-L1-Bound-vh} is proved by applying \eqref{positivity-vh}.
\end{proof}
Once the positivity of $u^{n+1}_h$ has been proved, we are in a position to reformulate equation \eqref{eq:u_h} so as to be able to obtain a discrete energy law, which exactly mimics its counterpart at the continuous level.
\begin{lemma} Under the conditions of Lemma \ref{lm:Lower_Bounds}.  Equation \eqref{eq:u_h} can be equivalently written as
\begin{equation}\label{eq:u_h-equiv_II}
(\delta_t u^{n+1}_h, x_h)_h+(\nabla u_h^{n+1},\nabla x_h)=(\nabla v_h^n, \mathcal{A}_h(u_h^{n+1})\nabla x_h),
\end{equation}
where $\mathcal{A}_h(u^{n+1}_h)\in\mathds{R}^{2\times 2}$ is a piecewise constant, diagonal matrix defined as follows. Let $T\in\mathcal{T}_h$. Then there exist two pairs  $(\a_{\underline{u}^i}^T, \a_{\overline{u}^i}^T)\in T^2$, $i=1,2$, such that
\begin{equation}\label{def-A}
[\mathcal{A}_h(u^{n+1}_h)|_T]_{ii}=\left\{
\begin{array}{ccl}
\displaystyle
\frac{u^{n+1}_h(\a_{\overline{u}^i}^T)- u^{n+1}_h(\a_{\underline{u}^i}^T)}{f'(u^{n+1}_h(\a_{\overline{u}^i}^T))- f'(u^{n+1}_h(\a_{\underline{u}^i}^T))}&\mbox{ if }&u^{n+1}_h(\a_{\underline{u}^i}^T)- u^{n+1}_h(\a_{\overline{u}^i}^T)\not=0 ,
\\
\displaystyle
u^{n+1}_h(\boldsymbol{b}_T)&\mbox{ if }&u^{n+1}_h(\a_{\underline{u}^i}^T)- u^{n+1}_h(\a_{\overline{u}^i}^T)=0,
\end{array}
\right.
\end{equation}
where $f(s)=s \log s -s$.
\end{lemma}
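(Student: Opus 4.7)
The plan is to reformulate the chemotaxis term element by element so that a discrete analog of the chain rule $u\,\nabla(\log u)=\nabla u$ becomes explicitly available; this reformulation is tailored to the later use of $\mathcal{I}_h(\log u_h^{n+1})$ as a test function in the proof of the discrete energy law \eqref{Global-Energy-Law}.

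I begin from the already-established equivalence \eqref{eq:u_h-equiv}, which shows that the chemotaxis term is a sum over elements with integrand $|T|\,u_h^{n+1}(\boldsymbol{b}_T)\,\nabla v_h^n|_T\cdot\nabla x_h|_T$ on each $T\in\mathcal{T}_h$. It therefore suffices to construct on each triangle a diagonal tensor $\mathcal{A}_h(u_h^{n+1})|_T$, with entries given by \eqref{def-A}, such that the new bilinear form reproduces the same chemotaxis contribution element by element.

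The key algebraic tool is the mean value theorem applied to $f'(s)=\log s$ (where $f(s)=s\log s-s$, so $f''(s)=1/s$): for $a,b>0$ with $a\neq b$,
\[
\frac{a-b}{\log a-\log b}=\frac{1}{f''(\xi)}=\xi\in\bigl(\min\{a,b\},\max\{a,b\}\bigr),
\]
i.e.\ the logarithmic mean of $a$ and $b$, which is well defined and strictly positive thanks to the positivity of $u_h^{n+1}$ established in Lemma \ref{lm:Lower_Bounds}. Its continuous extension $L(a,a)=a$ handles the degenerate branch of \eqref{def-A}, and it coincides with $u_h^{n+1}(\boldsymbol{b}_T)$ precisely when $u_h^{n+1}$ is constant on $T$.

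On each triangle $T$ with vertices $\{\a_1,\a_2,\a_3\}$, I exploit the identity $w_h(\a)-w_h(\a')=\nabla w_h|_T\cdot(\a-\a')$, valid for any $w_h\in X_h$ and $\a,\a'\in T$, applied to $w_h=u_h^{n+1}$ and $w_h=\mathcal{I}_h(\log u_h^{n+1})$. Selecting two pairs $(\a_{\underline{u}^i}^T,\a_{\overline{u}^i}^T)$, $i=1,2$, whose connecting vectors $\boldsymbol{\tau}_i^T=\a_{\overline{u}^i}^T-\a_{\underline{u}^i}^T$ span $\mathds{R}^2$, one reads the two components of $\nabla u_h^{n+1}|_T$ and $\nabla\mathcal{I}_h(\log u_h^{n+1})|_T$ in the local frame $\{\boldsymbol{\tau}_1^T,\boldsymbol{\tau}_2^T\}$ as finite differences at those two vertices. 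Defining $\mathcal{A}_h(u_h^{n+1})|_T$ as the diagonal matrix in this local frame with entries prescribed by \eqref{def-A} then produces the discrete chain rule
\[
\mathcal{A}_h(u_h^{n+1})|_T\,\nabla\mathcal{I}_h(\log u_h^{n+1})|_T=\nabla u_h^{n+1}|_T,
\]
and inserting this identity into the chemotaxis contribution on $T$ yields \eqref{eq:u_h-equiv_II}.

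The principal difficulty is the geometric selection of the vertex pairs on each $T$: they must produce a nondegenerate local frame (so $\mathcal{A}_h$ is well defined), entrywise strictly positive diagonal entries (which follows from the positivity of $u_h^{n+1}$ from Lemma \ref{lm:Lower_Bounds} and positivity of logarithmic means of positive quantities), and consistency of the reformulated expression with the barycentric form \eqref{eq:u_h-equiv}. Shape-regularity (H2) ensures a uniformly nondegenerate frame across the mesh, and the identity $\sum_i\nabla\varphi_{\a_i}|_T=0$ is what enables matching back the contribution corresponding to $u_h^{n+1}(\boldsymbol{b}_T)$.
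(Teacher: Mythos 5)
Your construction of $\mathcal{A}_h$ is genuinely different from the paper's, and it leaves the central claim of the lemma unproved. You build $\mathcal{A}_h|_T$ from two pairs of \emph{vertices} of $T$ so that the discrete chain rule $\mathcal{A}_h\nabla\mathcal{I}_h(\log u_h^{n+1})=\nabla u_h^{n+1}$ holds, and you then need the resulting bilinear form to coincide with the barycentric form $(\nabla v_h^n,\overline u_h^{n+1}\nabla x_h)$ of \eqref{eq:u_h-equiv} --- that is exactly what ``equivalently written'' means. But the logarithmic mean of two vertex values of $u_h^{n+1}$ is in general not equal to $u_h^{n+1}(\boldsymbol{b}_T)$ (take vertex values $1,1,e^2$: the barycentric value is $(2+e^2)/3$, while the logarithmic mean of $1$ and $e^2$ is $(e^2-1)/2$), so with your choice $(\nabla v_h^n,\mathcal{A}_h(u_h^{n+1})\nabla x_h)\neq(\nabla v_h^n,\overline u_h^{n+1}\nabla x_h)$ and \eqref{eq:u_h-equiv_II} defines a different scheme rather than a rewriting of \eqref{eq:u_h}. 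You flag this consistency requirement as ``the principal difficulty'' and invoke $\sum_i\nabla\varphi_{\a_i}|_T=0$, but that identity cannot repair a genuine numerical discrepancy between the two coefficients. A further mismatch: your matrix is diagonal only in the oblique frame $\{\boldsymbol{\tau}_1^T,\boldsymbol{\tau}_2^T\}$, whereas the statement (and the paper's proof, which works with the canonical directions $\boldsymbol{e}_i$) asserts a diagonal matrix in $\R^{2\times 2}$.

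The paper's proof goes the other way around: it does not fix the points first and accept whatever logarithmic mean results; it fixes the target value $c=u_h^{n+1}(\boldsymbol{b}_T)$ and \emph{chooses the two points afterwards} so that the difference quotient of $f'$ equals $1/c$ exactly. Concretely, for every $c>0$ and every $\varepsilon>0$ there exist $0<\underline u\le c\le\overline u$ with $|\overline u-\underline u|<\varepsilon$ whose logarithmic mean is exactly $c$; taking $\varepsilon$ small enough, both values are attained by the linear function $u_h^{n+1}$ at two points of the segment through $\boldsymbol{b}_T$ in direction $\boldsymbol{e}_i$ inside $T$ (the pairs in \eqref{def-A} are points of $T$, not vertices). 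Hence $[\mathcal{A}_h(u_h^{n+1})|_T]_{ii}=u_h^{n+1}(\boldsymbol{b}_T)$ for $i=1,2$, i.e.\ $\mathcal{A}_h(u_h^{n+1})=\overline u_h^{n+1}I_2$, and \eqref{eq:u_h-equiv_II} follows at once from \eqref{eq:u_h-equiv}. If you wish to keep a vertex-based chain-rule construction you would be proving a different lemma (and would have to rework the energy argument accordingly); as a proof of the stated equivalence, your argument has a gap.
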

\begin{proof} We must identify $\mathcal{A}_h(u^{n+1}_h)\equiv I_{2} \overline{u}^{n+1}_h$, where $I_2$ is the $2\times2$ identity matrix. To do so, first observe that $f'(s)=\log s$. Then it is easy to see that, for each $\varepsilon >0$ and $c\in (0, \infty)$, there exist two points $0<\underline{u}< \overline{u}$ such that $|\underline{u}- \overline{u}|<\varepsilon$, with $\underline{u}\le c\le \overline{u}$, so that
$$
\frac{f'(\overline{u})-f'(\underline{u})}{\overline{u}- \underline{u}}=\frac{1}{c}.
$$
Let $T\in\mathcal{T}_h$ and choose $c=u^{n+1}_h(\boldsymbol{b}_T)$. We are allowed to choose $\varepsilon$ small enough such that $$\underline{u}\ge \min_{t\in(-1, 1)} u^{n+1}_h( \boldsymbol{b}_T+  r_{\boldsymbol{b}_T}\boldsymbol{e}_i t)\not = \max_{t\in(-1, 1)} u^{n+1}_h( \boldsymbol{b}_T+  r_{\boldsymbol{b}_T}\boldsymbol{e}_i t)\ge\overline{u},$$ where $r_{\boldsymbol{b}_T}= dist(\boldsymbol{b}_T, \partial T)$ and $\boldsymbol{e}_i$ is the i$th$ vector of the canonical basis of $\R^2$. Therefore, there exists a pair $(\a_{\underline{u}^i}^T, \a_{\overline{u}^i}^T)$ such that
$u^{n+1}_h(\a_{\underline{u}^i}^T)=\underline{u}$ and $u^{n+1}_h(\a_{\overline{u}^i}^T)=\overline{u}$ and hence one defines $$[\mathcal{A}_h(u^{n+1}_h)|_T]_{ii}=\frac{u^{n+1}_h(\a_{\overline{u}^i}^T)- u^{n+1}_h(\a_{\underline{u}^i}^T)}{f'(u^{n+1}_h(\a_{\overline{u}^i}^T))- f'(u^{n+1}_h(\a_{\underline{u}^i}^T))}.$$
In the case that $\min_{t\in(-1, 1)} u^{n+1}_h( \boldsymbol{b}_T+  r_{\boldsymbol{b}_T}\boldsymbol{e}_i t) = \max_{t\in(-1, 1)} u^{n+1}_h( \boldsymbol{b}_T+  r_{\boldsymbol{i}_T}\boldsymbol{e}_i t)$, one defines
$$
[\mathcal{A}_h(u^{n+1}_h)|_T]_{ii}=u^{n+1}_h(\boldsymbol{b}_T). 
$$
This completes the proof.
\end{proof}

It is now shown that system \eqref{eq:u_h}-\eqref{eq:v_h} enjoys a discrete energy law locally in time.

\begin{lemma}[A discrete energy law] Under the conditions of Lemma \ref{lm:Lower_Bounds}, the discrete solution $(u^{n+1}_h, v^{n+1}_h)\in X_h^2$ computed via \eqref{eq:u_h} and \eqref{eq:v_h} satisfies
\begin{equation}\label{Local-Energy-Law}
\begin{array}{rcl}
\mathcal{E}_0(u^{n+1}_h,v^{n+1}_h)-\mathcal{E}_0(u^{n}_h,v^{n}_h)+k\|\delta_t v^{n+1}_h\|^2_h&&
\\
+k\|\mathcal{A}^{-\frac{1}{2}}_h(u^{n+1}_h)\nabla u^{n+1}_h-\mathcal{A}^{\frac{1}{2}}_h(u^{n+1}_h)\nabla v^n_h\|^2&\le&0
\end{array}
\end{equation}
where $\mathcal{E}_0(\cdot, \cdot)$ is defined in \eqref{def:energy}.
\end{lemma}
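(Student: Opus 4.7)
The plan is to imitate at the discrete level the continuous energy identity for \eqref{KS}, which is obtained by testing the $u$-equation with $\log u-v$ and the $v$-equation with $\partial_t v$. At the discrete level I would work with the equivalent form \eqref{eq:u_h-equiv_II}, whose rewriting through the diagonal matrix $\mathcal{A}_h(u^{n+1}_h)$ is precisely tailored so that the discrete chain rule
\begin{equation*}
\mathcal{A}_h(u^{n+1}_h)\,\nabla \mathcal{I}_h(\log u^{n+1}_h)=\nabla u^{n+1}_h
\end{equation*}
holds as an identity between piecewise-constant vector fields on each $T\in\mathcal{T}_h$. This follows directly from \eqref{def-A} with $f'(s)=\log s$, because the pair $(\a^T_{\underline{u}^i},\a^T_{\overline{u}^i})$ is chosen so that the difference quotient along $\boldsymbol{e}_i$ reproduces the constant value of $\partial_i u^{n+1}_h|_T$, while the nodal interpolation $\mathcal{I}_h$ on the other side replaces the $u$-values by $\log u$-values.

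With this identity in hand, I would substitute $x_h=\mathcal{I}_h(\log u^{n+1}_h)-v^n_h\in X_h$, admissible thanks to the strict positivity \eqref{positivity-uh}, into \eqref{eq:u_h-equiv_II}. The mass-lumped time-derivative term yields $(\delta_t u^{n+1}_h,\log u^{n+1}_h-v^n_h)_h$, while the diffusion and chemotaxis contributions, expanded via the discrete chain rule, make the cross terms $\pm(\nabla u^{n+1}_h,\nabla v^n_h)$ cancel to a perfect square, producing
\begin{equation*}
(\delta_t u^{n+1}_h,\log u^{n+1}_h-v^n_h)_h+\|\mathcal{A}^{-\frac{1}{2}}_h(u^{n+1}_h)\nabla u^{n+1}_h-\mathcal{A}^{\frac{1}{2}}_h(u^{n+1}_h)\nabla v^n_h\|^2=0.
\end{equation*}

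In parallel, I would substitute $x_h=\delta_t v^{n+1}_h$ into \eqref{eq:v_h} and apply $2a(a-b)\ge a^2-b^2$ to both $(v^{n+1}_h,\delta_t v^{n+1}_h)_h$ and $(\nabla v^{n+1}_h,\nabla\delta_t v^{n+1}_h)$, obtaining after multiplication by $k$
\begin{equation*}
k\|\delta_t v^{n+1}_h\|^2_h+\tfrac{1}{2}\bigl(\|v^{n+1}_h\|^2_h-\|v^n_h\|^2_h\bigr)+\tfrac{1}{2}\bigl(\|\nabla v^{n+1}_h\|^2-\|\nabla v^n_h\|^2\bigr)\le k(u^{n+1}_h,\delta_t v^{n+1}_h)_h.
\end{equation*}
To connect the two estimates, I would use the algebraic splitting
\begin{equation*}
-(u^{n+1}_h,v^{n+1}_h)_h+(u^n_h,v^n_h)_h=-k(\delta_t u^{n+1}_h,v^n_h)_h-k(u^{n+1}_h,\delta_t v^{n+1}_h)_h
\end{equation*}
for the $(u,v)$ cross term in $\mathcal{E}_0$, together with the nodewise convexity of $f(s)=s\log s-s$, which combined with the mass conservation $(u^{n+1}_h,1)_h=(u^n_h,1)_h$ (from testing \eqref{eq:u_h} with $x_h=1$) yields
\begin{equation*}
(u^{n+1}_h,\log u^{n+1}_h)_h-(u^n_h,\log u^n_h)_h\le k(\log u^{n+1}_h,\delta_t u^{n+1}_h)_h.
\end{equation*}

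Summing the $u$-identity (times $k$), the $v$-inequality, the convexity bound and the cross-term splitting, the terms $k(u^{n+1}_h,\delta_t v^{n+1}_h)_h$ and $k(\delta_t u^{n+1}_h,\log u^{n+1}_h-v^n_h)_h$ cancel exactly, and the remaining quantities assemble into $\mathcal{E}_0(u^{n+1}_h,v^{n+1}_h)-\mathcal{E}_0(u^n_h,v^n_h)$ plus the two positive dissipation terms, yielding \eqref{Local-Energy-Law}. The main conceptual obstacle is the discrete chain rule for $\mathcal{A}_h$; once that is in place, the rest is a careful accounting of mass-lumped inner products and of the mixed-time-level nature of the test function $\mathcal{I}_h(\log u^{n+1}_h)-v^n_h$, which is exactly what makes the perfect-square dissipation $\|\mathcal{A}^{-\frac{1}{2}}_h\nabla u^{n+1}_h-\mathcal{A}^{\frac{1}{2}}_h\nabla v^n_h\|^2$ appear and closes the estimate.
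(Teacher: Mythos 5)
Your proposal is correct and follows essentially the same route as the paper: testing \eqref{eq:u_h-equiv_II} with $\mathcal{I}_h(\log u^{n+1}_h)-v^n_h$ and \eqref{eq:v_h} with $\delta_t v^{n+1}_h$, using the discrete chain rule $\nabla\mathcal{I}_h f'(u^{n+1}_h)=\mathcal{A}_h^{-1}(u^{n+1}_h)\nabla u^{n+1}_h$ to form the perfect-square dissipation, the same cross-term splitting, and the convexity of $f(s)=s\log s-s$ together with mass conservation (the paper phrases the latter as a Taylor expansion with a nonnegative second-order remainder, which is equivalent to your supporting-line inequality). The only cosmetic difference is that your cross-term identity carries the correct sign where the paper's displayed version has a typo.
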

\begin{proof} First of all, recall that $f(s)=s \log s -s$; therefore, we are allowed to compute $f'(u^{n+1}_h)$ due to \eqref{positivity-uh}.
Select $x_h=\mathcal{I}_h f'(u^{n+1}_h)-v^n_h$ in \eqref{eq:u_h-equiv_II} and $x_h=\delta_t v^{n+1}_h$ in \eqref{eq:v_h} to get
\begin{equation}\label{lm5.3-lab1}
\begin{array}{r}
(\delta_t u^{n+1}_h, f'(u^{n+1}_h)-v^n_h)_h+(\nabla u^{n+1}_h, \nabla(\mathcal{I}_h f'(u^{n+1}_h)-v^n_h))
\\
-(\nabla v^{n}_h, \mathcal{A}_h(u^{n+1}_h)\nabla (\mathcal{I}_h f'(u^{n+1}_h)-v_h^n))=0
\end{array}
\end{equation}
and
\begin{equation}\label{lm5.3-lab2}
\begin{array}{l}
\displaystyle
\frac{1}{2 k}(\|v^{n+1}_h\|^2_h+\|\nabla v^{n+1}_h\|^2)-\frac{1}{2 k}(\|v^{n}_h\|^2_h+\|\nabla v^{n}_h\|^2)
\\
\displaystyle
+\frac{1}{2k}\|v^{n+1}_h-v^n_h\|^2_h+\frac{1}{2k}\|\nabla (v^{n+1}_h-v^n_h)\|^2
+\|\delta_t v^{n+1}_h\|^2_h- (u_h^{n+1}, \delta_t v^{n+1}_h)_h=0.
\end{array}
\end{equation}

We next pair some terms from \eqref{lm5.3-lab1} and \eqref{lm5.3-lab2} in order to handle them together. It is not hard to see that
\begin{equation}\label{lm5.3-lab3}
-(\delta_t u^{n+1}_h, v^n_h)_h-(u_h^{n+1}, \delta_t v^{n+1}_h)_h=-\frac{1}{k}(u^{n+1}_h, v^{n+1}_h)_h-\frac{1}{k}(u^n_h, v^n_h)_h.
\end{equation}

In view of \eqref{def-A}, there holds
$$
\partial_{x_i} \mathcal{I}_h(f'(u^{n+1}_h))=\frac{ f'(u^{n+1}_h(\a_{\overline{u}^i}^T))- f'(u^{n+1}_h(\a_{\underline{u}^i}^T))}{[\a_{\overline{u}^i}^T-\a_{\underline{u}^i}^T]_{i}},
$$
since $\mathcal{I}_h$ is piecewise linear. As a result,  one deduces from \eqref{def-A} that
$$
\nabla \mathcal{I}_h f'(u^{n+1}_h)= \mathcal{A}^{-1}_h(u^{n+1}_h)\nabla u^{n+1}_h.
$$
Therefore,
$$
(\nabla u^{n+1}_h, \nabla(\mathcal{I}_h f'(u^{n+1}_h)-v^n_h))=(\nabla u^{n+1}_h, \mathcal{A}^{-1}_h(u^{n+1}_h) \nabla u^{n+1})-(\nabla u^{n+1}_h, \nabla v^{n}_h)
$$
and
$$
(\nabla v^{n}_h, \mathcal{A}_h(u^{n+1}_h)\nabla (\mathcal{I}_h f'(u^{n+1}_h)-v_h^n))=(\nabla v^n_h, \nabla u^{n+1}_h)-(\nabla v^n_h, \mathcal{A}_h(u^{n+1}_h) \nabla v^n_h),
$$
which imply that
\begin{equation}\label{lm5.3-lab4}
\begin{array}{r}
(\nabla u^{n+1}_h, \nabla(\mathcal{I}_h f'(u^{n+1}_h)-v^n_h))-(\nabla v^{n}_h, \mathcal{A}_h(u^{n+1}_h)\nabla (\mathcal{I}_h f'(u^{n+1}_h)-v_h^n))
\\
= \|\mathcal{A}^{-\frac{1}{2}}_h(u^{n+1}_h)\nabla u^{n+1}_h-\mathcal{A}^{-\frac{1}{2}}_h(u^{n+1}_h)\nabla v^n_h\|^2.
\end{array}
\end{equation}

A Taylor polynomial of $f$ round $u^{n+1}_h$ evaluated at $u^{n}_h$ yields
$$
f(u^n_h)=f(u^{n+1}_h)-f'(u^{n+1}_h)(u^{n+1}_h-u^n_h)+\frac{f''(u^{n+\theta}_h)}{2}(u^{n+1}_h-u^n_h)^2,
$$
where $\theta\in (0,1)$ such that $u^{n+\theta}_h=\theta u^{n+1}_h+(1-\theta) u^n_h$. Hence,
$$
(\partial_t u^{n+1}_h, f'(u^{n+1}_h))_h=\frac{1}{k}(f(u^{n+1}_h),1)_h- \frac{1}{k}(f(u^{n+1}_h),1)_h+ \frac{k}{2} (f''(u^{n+\theta}_h),(\delta_t u^{n+1}_h)^2)_h.
$$
In fact, one can write the above expression as
\begin{equation}\label{lm5.3-lab5}
(\partial_t u^{n+1}_h, f'(u^{n+1}_h))_h=(u^{n+1}_h, \log u^{n+1}_h)_h-(u^n_h, \log u^n_h)_h+ k (f''(u^{n+\theta}_h),(\delta_t u^{n+1}_h)^2)_h
\end{equation}
owing to
$$
\int_{\Omega} u^{n+1}_h(\x)\,\dx=\int_{\Omega} u^{n}_h(\x)\,\dx.
$$

On adding \eqref{lm5.3-lab1} and \eqref{lm5.3-lab2}, we verify \eqref{Local-Energy-Law} from \eqref{lm5.3-lab3}, \eqref{lm5.3-lab4}, and \eqref{lm5.3-lab5}.

\end{proof}

\subsection{\emph{A priori} bounds}
Now that we have accomplished the discrete energy law \eqref{Local-Energy-Law} for system \eqref{eq:u_h}--\eqref{eq:v_h}, our goal is to derive \emph{a priori} energy bounds. It will be no means obvious since $\mathcal{E}_0(u^{n+1}_h,v^{n+1}_h)$ does not provide directly any control over $u^{n+1}_h$ and $v^{n+1}_h$.   The key ingredient will be the discrete Moser--Trudinger inequality \eqref{Moser-Trudinger-Ih}.
\begin{lemma}[Control of $(u^{n+1}_h, v^{n+1}_h)_h$]\label{lm:control_of_(u,v)_h} Assume that the conditions of Lemma \ref{lm:Lower_Bounds} are satisfied. Let $u_0\in H^1(\Omega)$ such that $\|u_0\|_{L^1(\Omega)}\in (0,4 \theta_\Omega)$. Then there exist $\delta,\varepsilon \in (0,1)$ such that
\begin{equation}\label{control_of_(u,v)_h}
 (u^{n+1}_h, v^{n+1}_h)_h\le \frac{1}{\delta}\mathcal{E}_0(u^{n+1}_h, v^{n+1}_h)+ \mathcal{R}^{\varepsilon, \delta}_0(u^0_h,v_h^0),
\end{equation}
where $\mathcal{E}_0(\cdot, \cdot)$ and  $\mathcal{R}_0^{\varepsilon, \delta}(\cdot,\cdot)$ are given in \eqref{def:energy} and  \eqref{def:R_0}, respectively.
\end{lemma}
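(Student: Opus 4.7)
The plan is to combine a discrete Nagai--Senba--Yoshida (NSY) inequality with the discrete Moser--Trudinger bound \eqref{Moser-Trudinger-Ih}, and then use the subcriticality hypothesis $\|u_0\|_{L^1(\Omega)}\in(0,4\theta_\Omega)$ to absorb the resulting $\|\nabla v_h^{n+1}\|^2$ contribution into the $\tfrac12\|\nabla v_h^{n+1}\|^2$ term of $\mathcal{E}_0$. Throughout I set $M:=\|u_h^{n+1}\|_{L^1(\Omega)}$, which by mass conservation \eqref{Local-L1-Bound-uh} equals $\|u_h^0\|_{L^1(\Omega)}<4\theta_\Omega$.

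First I will establish a discrete NSY inequality via Gibbs' inequality. Writing $m_{\a}:=\int_\Omega\varphi_{\a}$ and picking $\delta\in(0,1)$ to be fixed later, I introduce the nodal probability distributions
\[ p_{\a} := \frac{u_h^{n+1}(\a)\,m_{\a}}{M}, \qquad q_{\a} := \frac{e^{(1+\delta)v_h^{n+1}(\a)}\,m_{\a}}{Z}, \qquad Z := \int_\Omega\mathcal{I}_h e^{(1+\delta)v_h^{n+1}}. \]
Non-negativity of the relative entropy $\sum_{\a}p_{\a}\log(p_{\a}/q_{\a})\ge 0$, after multiplication by $M$ and recognizing that $Mp_\a = u_h^{n+1}(\a)m_\a$ converts the weighted sums into the lumped inner products $(\cdot,\cdot)_h$, yields
\[ (1+\delta)(u_h^{n+1},v_h^{n+1})_h \;\le\; (u_h^{n+1}\log u_h^{n+1},1)_h \,-\, M\log M \,+\, M\log Z. \]

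Next I will apply \eqref{Moser-Trudinger-Ih} to $w_h=(1+\delta)v_h^{n+1}$ and take logarithms to obtain
\[ \log Z \le \log C_\Omega + \log\bigl(1 + C_{\rm MT}(1+\delta)^2\|\nabla v_h^{n+1}\|^2\bigr) + \frac{(1+\delta)^2}{8\theta_\Omega}\|\nabla v_h^{n+1}\|^2 + \frac{1+\delta}{|\Omega|}\|v_h^{n+1}\|_{L^1(\Omega)}. \]
Combining this with the elementary estimates $-M\log M\le e^{-1}$, $\log C_\Omega\le C_\Omega/\varepsilon$ for $\varepsilon\in(0,1]$, a Young-type bound of the form $\log(1+y)\le \varepsilon+y/\varepsilon$ for the prefactor logarithm, and the $L^1$-control \eqref{Local-L1-Bound-vh} for $\|v_h^{n+1}\|_{L^1(\Omega)}$, every additive constant telescopes exactly into $\delta\,\mathcal{R}_0^{\varepsilon,\delta}(u_h^0,v_h^0)$ as defined in \eqref{def:R_0}, leaving only the leading $\frac{M(1+\delta)^2}{8\theta_\Omega}\|\nabla v_h^{n+1}\|^2$ (plus the residual coming from the prefactor) to handle.

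Finally, since $M<4\theta_\Omega$ I can fix $\delta,\varepsilon>0$ small enough so that $\frac{M(1+\delta)^2}{8\theta_\Omega}$ plus the residual gradient correction stays $\le\tfrac12$; this dominates the surviving gradient term by $\tfrac12\|\nabla v_h^{n+1}\|^2$. Reading the resulting bound against the definition of $\mathcal{E}_0$ and dividing by $\delta$ produces exactly \eqref{control_of_(u,v)_h}. The main obstacle is this joint choice of $\delta$ and $\varepsilon$: the interpolation-error prefactor $1+C_{\rm MT}(1+\delta)^2\|\nabla v_h^{n+1}\|^2$ in \eqref{Moser-Trudinger-Ih} contributes an extra $\|\nabla v_h^{n+1}\|^2$ term once the logarithm is taken, which must be swallowed \emph{together} with the leading $\frac{M(1+\delta)^2}{8\theta_\Omega}\|\nabla v_h^{n+1}\|^2$ within the subcriticality margin $\tfrac12-M/(8\theta_\Omega)>0$ afforded by the hypothesis $M<4\theta_\Omega$.
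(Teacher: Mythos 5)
Your overall route coincides with the paper's: the Gibbs/relative-entropy step with the weights $p_\a=u_h^{n+1}(\a)m_\a/M$ is literally the paper's application of Jensen's inequality to $-\log$ of $\int_\Omega \mathcal{I}_h(e^{(1+\delta)v_h^{n+1}})/\|u^0_h\|_{L^1(\Omega)}$ (the two are the same inequality), and the subsequent use of \eqref{Moser-Trudinger-Ih}, the bound $-M\log M\le e^{-1}$, the $L^1$-control \eqref{Local-L1-Bound-vh}, and the absorption of the $\|\nabla v_h^{n+1}\|^2$ contributions into the $\tfrac12\|\nabla v_h^{n+1}\|^2$ term of $\mathcal{E}_0$ under $\|u_0\|_{L^1(\Omega)}<4\theta_\Omega$ are all exactly as in the paper.

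There is, however, one step that fails as written. You bound the prefactor logarithm by $\log(1+y)\le\varepsilon+y/\varepsilon$ with $y=C_{\rm MT}(1+\delta)^2\|\nabla v_h^{n+1}\|^2$, which attaches the \emph{large} factor $1/\varepsilon$ to the gradient term; your subsequent instruction to ``fix $\delta,\varepsilon$ small enough'' so that the total gradient coefficient stays $\le\tfrac12$ is then self-defeating, since shrinking $\varepsilon$ inflates that coefficient. For any fixed $\varepsilon\in(0,1)$ the total coefficient is $M(1+\delta)^2\bigl[\tfrac{1}{8\theta_\Omega}+\tfrac{C_{\rm MT}}{\varepsilon}\bigr]$, and the hypothesis $M<4\theta_\Omega$ alone does not make this $\le\tfrac12$ when $C_{\rm MT}$ is not small. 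The split must go the other way: write $\log\bigl(C_\Omega(1+y)\bigr)=\log(C_\Omega/\varepsilon)+\log\bigl(\varepsilon(1+y)\bigr)\le C_\Omega/\varepsilon+\varepsilon+\varepsilon y$, so that the gradient correction carries the \emph{small} factor $\varepsilon$ at the price of the large additive constant $C_\Omega/\varepsilon$ --- which is precisely why $\mathcal{R}_0^{\varepsilon,\delta}$ in \eqref{def:R_0} contains $C_\Omega/\varepsilon$. With that split, the smallness condition $(1+\delta)^2[8\theta_\Omega C_{\rm MT}\varepsilon+1]\|u_0\|_{L^1(\Omega)}/(8\theta_\Omega)\le\tfrac12$ of the paper is attainable for small $\delta,\varepsilon$ exactly because $\|u_0\|_{L^1(\Omega)}<4\theta_\Omega$, and the rest of your argument goes through unchanged.
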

\begin{proof} Let $\delta,\varepsilon\in(0,1)$ such that
\begin{equation}\label{Asump:delta-epsilon}
\frac{(1+\delta)^2[8\theta_\Omega C_{\rm MT}\varepsilon+ 1]\|u_0\|_{L^1(\Omega)}}{8\theta_\Omega}\le \frac{1}{2}.
\end{equation}
Using Jensen's inequality and invoking \eqref{Local-L1-Bound-uh}, one finds
$$
\begin{array}{rcl}
\displaystyle
- \log \int_\Omega \frac{\mathcal{I}_h (e^{(1+\delta)v^{n+1}_h(\x)})}{\|u^0_h\|_{L^1(\Omega)}}\,\dx&=&\displaystyle
-\log \sum_{\a\in\mathcal{N}_h} \frac{e^{(1+\delta)v^{n+1}_h(\a)}}{u^{n+1}_h(\a)} \frac{u^{n+1}_h(\a)}{\|u_h^0\|_{L^1(\Omega)}} \int_\Omega \varphi_\a(\x)\, \dx
\\
&\le & \displaystyle
\sum_{\a\in\mathcal{N}_h} -\log \left(\frac{e^{(1+\delta) v^{n+1}_h(\a)}}{u^{n+1}_h(\a)}\right) \frac{u^{n+1}_h(\a)}{\|u_h^0\|_{L^1(\Omega)}} \int_\Omega \varphi_a(\x)\,\dx
\\
&=&\displaystyle
-\frac{1+\delta}{\|u_h^0\|_{L^1(\Omega)}}(u^{n+1}_h,v^{n+1}_h)_h
\\
&&\displaystyle
+\frac{1}{\|u_h^0\|_{L^1(\Omega)}}(\log u_h^{n+1}, u^{n+1}_h)_h
\end{array}
$$
and hence
$$
\begin{array}{rcl}
-(u^{n+1}_h, v^{n+1}_h)_h+(\log u^{n+1}_h, u^{n+1}_h)_h&\ge& \delta (u^{n+1}_h, v^{n+1}_h)_h+\|u_h^0\|_{L^1(\Omega)} \log \|u_h^0\|_{L^1(\Omega)}
\\
&&\displaystyle
- \|u_h^0\|_{L^1(\Omega)} \log \int_\Omega \mathcal{I}_h (e^{(1+\delta)v^{n+1}_h(\x)})\,\dx.
\end{array}
$$
In virtue of \eqref{Moser-Trudinger-Ih}, we can bound
$$
\begin{array}{rcl}
\displaystyle
\log \int_\Omega \mathcal{I}_h (e^{(1+\delta)v^{n+1}_h(\x)})\,\dx&\le&  \log \Big(C_\Omega (1+C_{\rm MT} (1+\delta)^2 \|\nabla v^{n+1}_h\|^2)\Big)
\\
&&\displaystyle
+\frac{(1+\delta)^2}{8\theta_\Omega} \|\nabla v^{n+1}_h\|^2+\frac{1+\delta}{|\Omega|} \|v_h^{n+1}\|_{L^1(\Omega)}
\\
&=& \displaystyle
\log( \frac{C_\Omega}{\varepsilon}) +\log \Big(\varepsilon (1+C_{\rm MT} (1+\delta)^2 \|\nabla v^{n+1}_h\|^2)\Big)
\\
&&\displaystyle
+\frac{(1+\delta)^2}{8\theta_\Omega} \|\nabla v^{n+1}_h\|^2+\frac{1+\delta}{|\Omega|} \|v_h^{n+1}\|_{L^1(\Omega)}
\\
&\le& \displaystyle
\frac{C_\Omega}{\varepsilon} + \varepsilon+C_{\rm MT} \varepsilon (1+\delta)^2 \|\nabla u^{n+1}_h\|^2
\\
&&\displaystyle
+\frac{(1+\delta)^2}{8\theta_\Omega} \|\nabla v^{n+1}_h\|^2+\frac{1+\delta}{|\Omega|} \|v_h^{n+1}\|_{L^1(\Omega)}.
\end{array}
$$
Therefore,
$$
\begin{array}{rcl}
-(u^{n+1}_h, v^{n+1}_h)_h+(\log u^{n+1}_h, u^{n+1}_h)_h&\ge& \delta (u^{n+1}_h, v^{n+1}_h)_h+\|u_h^0\|_{L^1(\Omega)} \log \|u_h^0\|_{L^1(\Omega)}
\\
&& \displaystyle
-\|u_h^0\|_{L^1(\Omega)} (\frac{C_\Omega}{\varepsilon}+\varepsilon)
\\
&&\displaystyle
-C_{\rm MT} \varepsilon (1+\delta)^2 \|u_h^0\|_{L^1(\Omega)} \|\nabla v^{n+1}_h\|^2
\\
&&\displaystyle
- \frac{(1+\delta)^2 \|u_h^0\|_{L^1(\Omega)} }{8\theta_\Omega} \|\nabla v^{n+1}_h\|^2
\\
&&\displaystyle
-\frac{(1+\delta)\|u_h^0\|_{L^1(\Omega)}}{|\Omega|} \|v_h^{n+1}\|_{L^1(\Omega)}.
\end{array}
$$
On recalling \eqref{def:energy} and on noting  \eqref{Asump:delta-epsilon}, it follows from $x \log x>-\frac{1}{e}$ for $x>0$ that
$$
\begin{array}{rcl}
\mathcal{E}_0(u^{n+1}_h, v^{n+1}_h)&\ge&\displaystyle \frac{1}{2}\|v^{n+1}_h\|_h^2+
\frac{1}{2} \|\nabla v^{n+1}_h\|^2 +\delta (u^{n+1}_h, v^{n+1}_h)_h
\\
&&\displaystyle
+\|u_h^0\|_{L^1(\Omega)} \Big (\log \|u_h^0\|_{L^1(\Omega)} - \frac{C_\Omega}{\varepsilon} - \varepsilon\Big)
\\
&&\displaystyle
- \frac{(1+\delta)^2[8\theta_\Omega C_{\rm MT}\varepsilon+1]\|u_0\|_{L^1(\Omega)}]}{8\theta_\Omega} \|\nabla v^{n+1}_h\|^2
\\
&&\displaystyle
-\frac{(1+\delta)\|u_h^0\|_{L^1(\Omega)}}{|\Omega|} \|v_h^{n+1}\|_{L^1(\Omega)}
\\
&\ge&\displaystyle
\frac{1}{2} \| v^{n+1}_h\|^2_h +\delta (u^{n+1}_h, v^{n+1}_h)_h-\frac{1}{e}
\\
&&\displaystyle
-\|u_h^0\|_{L^1(\Omega)}\left( \frac{C_\Omega}{\varepsilon} +\varepsilon+\frac{(1+\delta)}{|\Omega|} \|v_h^{n+1}\|_{L^1(\Omega)}\right).
\end{array}
$$
Finally, we have, from \eqref{Local-L1-Bound-vh}, that
$$
\begin{array}{rcl}
\delta (u^{n+1}_h, v^{n+1}_h)_h&\le&\displaystyle
\mathcal{E}_0(u^{n+1}_h, v^{n+1}_h)+\frac{1}{e}
\\
&&\displaystyle
+\|u_h^0\|_{L^1(\Omega)}\Big(\frac{C_\Omega}{\varepsilon}+\varepsilon+\frac{(1+\delta)}{|\Omega|} (\|v_h^0\|_{L^1(\Omega)}+\|u^0_h\|_{L^1(\Omega)})\Big);
\end{array}
$$
thus, proving the result.
\end{proof}
The following is an immediate consequence of Lemma \ref{lm:control_of_(u,v)_h}.
\begin{corollary}[Control of $\|u_h^{n+1}\log u_h^{n+1}\|_{L^1(\Omega)}$] Under the conditions of Lemma \ref{lm:Lower_Bounds}, there holds
\begin{equation}\label{control_of_(logu,u)_h}
\|u_h^{n+1}\log u_h^{n+1}\|_{L^1(\Omega)}\le(1+\frac{1}{\delta})\mathcal{E}_0(u^{n+1}_h, v^{n+1}_h)+\mathcal{R}^{\varepsilon,\delta}_0(u^0_h,v^0_h)+2\frac{|\Omega|}{e}.
\end{equation}
\begin{proof} Write
\begin{equation}\label{co5.5-lab1}
\begin{array}{rcl}
\displaystyle
\|u^{n+1}_h \log u^{n+1}\|_{L^1(\Omega)}&=&\displaystyle
-\int_{\Omega} u^{n+1}_h(\x) \log_{-} u^{n+1}(\x)\,\dx
\\[1.5ex]
&&+\displaystyle
\int_{\Omega} u^{n+1}_h(\x) \log_{+} u^{n+1}(\x)\,\dx.
\end{array}
\end{equation}
Clearly, from $-\frac{1}{e}\le x \log x\le 0$ for $x\in[0,1]$, one gets
\begin{equation}\label{co5.5-lab2}
-\int_{\Omega} u^{n+1}_h(\x) \log_{-} u^{n+1}(\x)\,\dx\le \frac{|\Omega|}{e}.
\end{equation}
By the definition of $\mathcal{E}_0(u^{n+1}_h, v^{n+1}_h)$, one can easily deduce from \eqref{control_of_(u,v)_h} that
\begin{equation}\label{co5.5-lab3}
\begin{array}{rcl}
\displaystyle
\int_{\Omega} u^{n+1}_h(\x) \log_{+} u^{n+1}(\x)\,\dx&\le&\displaystyle
\mathcal{E}_0(u^{n+1}_h, v^{n+1}_h)
-\int_{\Omega} u^{n+1}_h(\x) \log_{-} u^{n+1}(\x)\,\dx
\\
&&+(u^{n+1}_h,v^{n+1}_h)_h
\\
&\le&\displaystyle
(1+\frac{1}{\delta})\mathcal{E}_0(u^{n+1}_h, v^{n+1}_h)+\mathcal{R}^{\varepsilon,\delta}_0(u^0_h,v^0_h)+\frac{|\Omega|}{e}.
\end{array}
\end{equation}
Thus, inserting \eqref{co5.5-lab2} and \eqref{co5.5-lab3} into \eqref{co5.5-lab1} yields \eqref{control_of_(logu,u)_h}.
\end{proof}
\end{corollary}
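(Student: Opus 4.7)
The approach is to split $\|u_h^{n+1}\log u_h^{n+1}\|_{L^1(\Omega)}$ according to the sign of $\log u_h^{n+1}$: on $\{u_h^{n+1}\le 1\}$ the integrand is controlled by the elementary bound $-s\log s\le 1/e$, while on $\{u_h^{n+1}\ge 1\}$ I would invert the definition \eqref{def:energy} of $\mathcal{E}_0$ and feed the cross term into Lemma \ref{lm:control_of_(u,v)_h}.

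Concretely, using $\log=\log_{+}+\log_{-}$ with $\log_{+}=\max(\log,0)\ge 0$ and $\log_{-}=\min(\log,0)\le 0$, and recalling that $u_h^{n+1}>0$ by \eqref{positivity-uh}, I would write
\[
\|u_h^{n+1}\log u_h^{n+1}\|_{L^1(\Omega)}=\int_\Omega u_h^{n+1}(\x)\log_{+}u_h^{n+1}(\x)\,\dx-\int_\Omega u_h^{n+1}(\x)\log_{-}u_h^{n+1}(\x)\,\dx,
\]
where both summands are non-negative. The second summand is at most $|\Omega|/e$ since $-s\log s\le 1/e$ on $(0,1]$.

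For the positive-part summand I would rearrange the defining identity \eqref{def:energy} and drop the non-negative $L^2$-type contributions to obtain
\[
(\log u_h^{n+1},u_h^{n+1})_h\le \mathcal{E}_0(u_h^{n+1},v_h^{n+1})+(u_h^{n+1},v_h^{n+1})_h.
\]
Since $\mathcal{I}_h$ reproduces its argument at nodes, the mass-lumped quantity on the left splits nodewise into the $\log_{+}$ and $\log_{-}$ pieces, and transferring the (non-positive) $\log_{-}$ piece to the right converts this into an upper bound for $\int u_h^{n+1}\log_{+}u_h^{n+1}\,\dx$ at the cost of an additional $|\Omega|/e$. Lemma \ref{lm:control_of_(u,v)_h} then replaces $(u_h^{n+1},v_h^{n+1})_h$ by $\frac{1}{\delta}\mathcal{E}_0(u_h^{n+1},v_h^{n+1})+\mathcal{R}_0^{\varepsilon,\delta}(u_h^0,v_h^0)$; summing the two $|\Omega|/e$ contributions gives the coefficient $2|\Omega|/e$ and concludes the argument.

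The only delicate point is keeping the mass-lumped inner product $(\,\cdot\,,\,\cdot\,)_h$ aligned with the continuous integrals of $\log_{\pm}u_h^{n+1}$: these cut-offs do not lie in $X_h$, so the decomposition must be read off pointwise at the vertices before the lumping weights $\int_\Omega\varphi_\a\,\dx$ are reintroduced. Once this bookkeeping is made explicit, the estimate reduces to a single application of Lemma \ref{lm:control_of_(u,v)_h} together with $-s\log s\le 1/e$.
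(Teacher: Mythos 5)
Your proposal is correct and follows essentially the same route as the paper: split off the $\log_{-}$ part via $-s\log s\le 1/e$, drop the nonnegative terms in $\mathcal{E}_0$ to isolate $(\log u_h^{n+1},u_h^{n+1})_h$, and invoke Lemma \ref{lm:control_of_(u,v)_h} for the cross term, with the two $|\Omega|/e$ contributions summing to $2|\Omega|/e$. The lumped-versus-continuous bookkeeping you flag is resolved by the convexity of $s\mapsto s\log_{+}s$ (a Jensen argument on each triangle, as in the first part of \eqref{Stab:I_h_Ln}), a point the paper leaves implicit as well.
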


At this point a local-in-time,  \emph{a priori} bound for $u^{n+1}_h$ and $v^{n+1}_h$ on which an induction procedure will be applied is derived.
\begin{lemma}[\emph{A priori} bounds]\label{lm:Local_A_Priori_Bounds} Suppose that the conditions of Lemma \ref{lm:Lower_Bounds} are fulfilled. Let  $(u^{n+1}_h, v_h^{n+1})\in X_h^2$ be the discrete solution computed via \eqref{eq:u_h} and \eqref{eq:v_h}. Then there holds
\begin{equation}\label{Local_A_Priori_Bounds}
\begin{array}{ll}
\mathcal{E}_1(u^{n+1}_h, v^{n+1}_h)-\mathcal{E}_1(u^n_h, v^n_h)+2k \|\widetilde\Delta_h v^{n+1}_h\|^2
\\
\displaystyle
+k (\mathcal{R}^{\gamma,h}_1(u^{n+1}_h,v^{n+1}_h) \|\nabla u^{n+1}_h\|^2+\frac{1}{2}\|\nabla\widetilde\Delta_h v^{n+1}_h\|^2)
\\
\le C\,k \Big(\|\delta_t v_h^{n+1}\|_h+\|\delta_t v_h^{n+1}\|_h^2\Big)\|u^{n+1}_h\|^2_h
\\
\displaystyle
+C\,k \|u^{n+1}_h\log u^{n+1}_h\|_{L^1(\Omega)}+C k \|u^0_h\|_{L^1(\Omega)},
\end{array}
\end{equation}
where $\mathcal{E}_1(\cdot, \cdot)$ is defined in \eqref{def:energy_II} and
$$
\mathcal{R}^{\gamma,h}_1(u^{n+1}_h,v^{n+1}_h):=\frac{4}{3}-\gamma-\gamma^3 \|u^{n+1}_h\log u^{n+1}_h\|_{L^1(\Omega)}- C h^{1-\frac{2}{p}} \mathcal{F}(u^0_h,v^0_h).
$$
\end{lemma}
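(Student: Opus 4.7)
The plan is to derive two energy-type estimates — one from the u-equation and one from a $\widetilde\Delta_h$-reformulation of the v-equation — sum them, apply Young to the cross term, and then absorb the chemotaxis nonlinearity on the right. First, I test \eqref{eq:u_h-equiv_II} with $x_h=u_h^{n+1}$ and use the telescoping identity for $(\delta_t u_h^{n+1}, u_h^{n+1})_h$ to obtain
\begin{equation*}
\|u_h^{n+1}\|_h^2 - \|u_h^n\|_h^2 + 2k\|\nabla u_h^{n+1}\|^2 \le 2k(\nabla v_h^n, \mathcal{A}_h(u_h^{n+1})\nabla u_h^{n+1}).
\end{equation*}
Second, since $(\cdot,\cdot)_h$ is an inner product on $X_h$, equation \eqref{eq:v_h} is equivalent to the nodal identity $\delta_t v_h^{n+1} - \widetilde\Delta_h v_h^{n+1} + v_h^{n+1} = u_h^{n+1}$ in $X_h$. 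Applying $\widetilde\Delta_h$ to both sides and pairing with $\widetilde\Delta_h v_h^{n+1}$ in $(\cdot,\cdot)_h$, using \eqref{Discrete-Laplacian} twice to integrate by parts, yields
\begin{equation*}
\|\widetilde\Delta_h v_h^{n+1}\|_h^2 - \|\widetilde\Delta_h v_h^n\|_h^2 + 2k\|\widetilde\Delta_h v_h^{n+1}\|_h^2 + 2k\|\nabla\widetilde\Delta_h v_h^{n+1}\|^2 \le -2k(\nabla u_h^{n+1}, \nabla\widetilde\Delta_h v_h^{n+1}).
\end{equation*}
The norm equivalence coming from \eqref{Stab:I_h_Ln} lets me identify these $\|\cdot\|_h^2$ quantities with the $\|\cdot\|^2$ quantities that appear in $\mathcal{E}_1$, at the price of innocuous consistency errors that are absorbed using the induction hypothesis \eqref{hyp:induction}.

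Summing the two estimates and applying Young, namely $-2k(\nabla u_h^{n+1}, \nabla \widetilde\Delta_h v_h^{n+1}) \le \tfrac{3k}{2}\|\nabla\widetilde\Delta_h v_h^{n+1}\|^2 + \tfrac{2k}{3}\|\nabla u_h^{n+1}\|^2$, produces the LHS of \eqref{Local_A_Priori_Bounds} except that the coefficient of $k\|\nabla u_h^{n+1}\|^2$ reads $4/3$ in place of $\mathcal{R}^{\gamma,h}_1$, while the chemotaxis contribution $2k(\nabla v_h^n, \mathcal{A}_h(u_h^{n+1})\nabla u_h^{n+1})$ still sits on the right. Absorbing that contribution so as to downgrade the $4/3$ into $\mathcal{R}^{\gamma,h}_1$ is the core of the argument. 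I would split $\nabla v_h^n = \nabla v_h^{n+1} - k\nabla\delta_t v_h^{n+1}$. The $\nabla\delta_t v_h^{n+1}$ piece is bounded by a Hölder estimate in $(L^\infty, L^2, L^2)$ combined with the inverse inequalities \eqref{inv_LinfToL2_global}--\eqref{inv_W1infToW1p_global}, the pointwise bound $|[\mathcal{A}_h(u_h^{n+1})|_T]_{ii}|\le u_h^{n+1}(\boldsymbol{b}_T)$, and \eqref{Stab:I_h_Ln}; a weighted Young then delivers the $Ck(\|\delta_t v_h^{n+1}\|_h + \|\delta_t v_h^{n+1}\|_h^2)\|u_h^{n+1}\|_h^2$ term of the right-hand side plus a small $\gamma k\|\nabla u_h^{n+1}\|^2$ contributing the $\gamma$ in $\mathcal{R}^{\gamma,h}_1$. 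For the $\nabla v_h^{n+1}$ piece, Hölder with exponents $(p, 2p/(p-2), 2)$ together with \eqref{stab:Discrete-Laplacian-W1p-H2-xh}, namely $\|\nabla v_h^{n+1}\|_{L^p}\le C\|\widetilde\Delta_h v_h^{n+1}\|$, and a Young absorbing a multiple of $k\|\widetilde\Delta_h v_h^{n+1}\|^2$ into the left side leaves a term of the form $Ck\|u_h^{n+1}\|_{L^{2p/(p-2)}}^2\,\|\nabla u_h^{n+1}\|^2$.

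The main obstacle is bounding $\|u_h^{n+1}\|_{L^{q}}^2$ for $q=2p/(p-2)$ in terms of $\|u_h^{n+1}\log u_h^{n+1}\|_{L^1(\Omega)}$: this is precisely where the discrete Moser--Trudinger inequality \eqref{Moser-Trudinger-Ih} is decisive, applicable thanks to the positivity $u_h^{n+1}>0$ from Lemma~\ref{lm:Lower_Bounds} and to the mass conservation \eqref{Local-L1-Bound-uh}. A careful choice of the Young parameter turns the resulting bound into $\gamma^2\|u_h^{n+1}\log u_h^{n+1}\|_{L^1(\Omega)} + C\|u_h^0\|_{L^1(\Omega)}$, which combined with the factor $\gamma$ already extracted in the preceding Young step yields the $\gamma^3\|u^{n+1}_h\log u^{n+1}_h\|_{L^1(\Omega)}$ coefficient in $\mathcal{R}^{\gamma,h}_1$. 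The residual $Ch^{1-2/p}\mathcal{F}(u_h^0,v_h^0)$ arises from controlling part of $\|\nabla v_h^n\|_{L^\infty}$ via the inverse estimate $\|\nabla v_h^n\|_{L^\infty}\le Ch^{-2/p}\|\nabla v_h^n\|_{L^p}\le Ch^{-2/p}\|\widetilde\Delta_h v_h^n\|$ (exactly as in \eqref{lm5.1-lab1}) and invoking the induction hypothesis \eqref{hyp:induction}. Gathering every contribution reproduces \eqref{Local_A_Priori_Bounds}.
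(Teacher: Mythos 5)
Your opening moves coincide with the paper's: testing \eqref{eq:u_h} with $x_h=u_h^{n+1}$, testing \eqref{eq:v_h} with $x_h=-\widetilde\Delta_h^2 v_h^{n+1}$ (your nodal reformulation is equivalent), and the Young split $2|(\nabla u_h^{n+1},\nabla\widetilde\Delta_h v_h^{n+1})|\le \tfrac23\|\nabla u_h^{n+1}\|^2+\tfrac32\|\nabla\widetilde\Delta_h v_h^{n+1}\|^2$ that produces the $\tfrac43$. The split $\nabla v_h^n=\nabla v_h^{n+1}-k\nabla\delta_t v_h^{n+1}$ with inverse inequalities and \eqref{restriction-h-k} for the $\delta_t$ piece is also essentially the paper's $I_2$ estimate. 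But your treatment of the $\nabla v_h^{n+1}$ piece contains a fatal gap: after Hölder with exponents $(p,\,2p/(p-2),\,2)$ and \eqref{stab:Discrete-Laplacian-W1p-H2-xh} you are left with $Ck\,\|u_h^{n+1}\|_{L^{q}}^2\|\nabla u_h^{n+1}\|^2$, $q=2p/(p-2)$, and you claim $\|u_h^{n+1}\|_{L^q}^2\le\gamma^2\|u_h^{n+1}\log u_h^{n+1}\|_{L^1(\Omega)}+C\|u_h^0\|_{L^1(\Omega)}$ via the Moser--Trudinger inequality. This is false for every $q>1$: $L\log L$ controls no $L^q$ norm. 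A spike of height $M$ on a set of measure $1/M$ has unit mass and $\|u\log u\|_{L^1}=\log M$, while $\|u\|_{L^q}^2=M^{2-2/q}\to\infty$; and \eqref{Moser-Trudinger-Ih} only yields bounds exponential in $\|\nabla\cdot\|^2$ (or, in dual Young form $st\le s\log s+e^t$, bounds on $(u_h,v_h)_h$, which is how it is actually used in Lemma~\ref{lm:control_of_(u,v)_h} -- not here). Any Gagliardo--Nirenberg repair inserts extra powers of $\|\nabla u_h^{n+1}\|$, making the term supercritical and non-absorbable. Two further symptoms of the same problem: absorbing ``a multiple of $k\|\widetilde\Delta_h v_h^{n+1}\|^2$ into the left'' would consume part of the $2k\|\widetilde\Delta_h v_h^{n+1}\|^2$ that the lemma retains in full, and your direct Hölder split contains no $O(h)$ factor anywhere, so the $Ch^{1-2/p}\mathcal{F}(u_h^0,v_h^0)$ term in $\mathcal{R}^{\gamma,h}_1$ has no source in your argument.

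The idea you are missing is the discrete chain rule hidden in the barycentric quadrature. Writing $u_h^{n+1}(\boldsymbol{b}_T)=\tfrac12\bigl(u_h^{n+1}(\a^T_{\overline u^i})+u_h^{n+1}(\a^T_{\underline u^i})\bigr)+\tfrac12\nabla u_h^{n+1}|_T\cdot\bigl((\boldsymbol{b}_T-\a^T_{\overline u^i})+(\boldsymbol{b}_T-\a^T_{\underline u^i})\bigr)$, the paper decomposes $(\nabla v_h^n,\overline u_h^{n+1}\nabla u_h^{n+1})$ into $\tfrac12(\nabla v_h^{n+1},\nabla\mathcal{I}_h(u_h^{n+1})^2)$, the $\delta_t$ correction you already have, and $O(h)$ remainders bounded by $Ch\|\nabla v_h^n\|_{L^\infty(\Omega)}\|\nabla u_h^{n+1}\|^2\le Ch^{1-\frac2p}\mathcal{F}(u_h^0,v_h^0)\|\nabla u_h^{n+1}\|^2$ via \eqref{inv_W1infToW1p_global}, \eqref{stab:Discrete-Laplacian-W1p-H2-xh} and \eqref{hyp:induction} -- this is where the $h$-dependent term in $\mathcal{R}^{\gamma,h}_1$ is born. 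The leading term is then integrated by parts through the equation: testing \eqref{eq:v_h} with $x_h=\tfrac12\mathcal{I}_h(u_h^{n+1})^2$ converts it into $-\tfrac12(\delta_t v_h^{n+1},\mathcal{I}_h(u_h^{n+1})^2)_h-\tfrac12(v_h^{n+1},\mathcal{I}_h(u_h^{n+1})^2)_h+\tfrac12(u_h^{n+1},\mathcal{I}_h(u_h^{n+1})^2)_h$, where the middle term is $\le0$ by the lower bounds, the first gives the $C(\|\delta_t v_h^{n+1}\|_h+\|\delta_t v_h^{n+1}\|_h^2)\|u_h^{n+1}\|_h^2$ contribution via the $L^4$ Gagliardo--Nirenberg inequality, and the cubic term $\sim\|u_h^{n+1}\|_{L^3(\Omega)}^3$ is handled by the $L^3$-versus-$L\log L$ interpolation inequality of Nagai--Senba--Yoshida (their Lemma 3.5), yielding exactly the absorbable $\gamma^3\|u_h^{n+1}\log u_h^{n+1}\|_{L^1(\Omega)}\|\nabla u_h^{n+1}\|^2$ plus $C\|u_h^{n+1}\log u_h^{n+1}\|_{L^1(\Omega)}^3+C\|u_h^{n+1}\|_{L^1(\Omega)}$. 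Without this detour through the $v$-equation the chemotaxis term is critical and a direct Hölder bound cannot close, so your proposal as written cannot be completed.
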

\begin{proof} Set $ x_h=u^{n+1}_h$ in \eqref{eq:u_h} to obtain
\begin{equation}\label{lm5.6-lab1}
\|u^{n+1}_h\|^2_h-\|u^n_h\|_h^2+\|u^{n+1}_h-u^n_h\|_h^2+2 k\|\nabla u_h^{n+1}\|^2=2\, k(\nabla v_h^n, \overline{u}_h^{n+1}\nabla u_h^{n+1}).
\end{equation}
Consider $T\in\mathcal{T}_h$ and let $\boldsymbol{b}_T$ be its barycenter to write
$$
u_h^{n+1}(\boldsymbol{b}_T)=u_h^{n+1}(\a_{\overline{u}^i}^T)+\nabla u_h^{n+1}|_T\cdot (\boldsymbol{b}_T-\a_{\overline{u}^i}^T)
$$
and
$$
u_h^{n+1}(\boldsymbol{b}_T)=u_h^{n+1}(\a_{\underline{u}^i}^T)+\nabla u_h^{n+1}|_T\cdot (\boldsymbol{b}^T-\a_{\underline{u}^i}^T);
$$
thereby,
\begin{equation}\label{lm5.6-lab2}
u_h^{n+1}(\boldsymbol{b}_T)=\frac{1}{2} (u_h^{n+1}(\a_{\overline{u}^i}^T)+u_h^{n+1}(\a_{\underline{u}^i}^T))+\frac{1}{2} \nabla u_h^{n+1}|_T\cdot ( (\boldsymbol{b}^T-\a_{\overline{u}^i}^T) + (\boldsymbol{b}^T-\a_{\underline{u}^i}^T)).
\end{equation}
To deal with the right-hand side of \eqref{lm5.6-lab1}, we proceed as follows. Let us write
\begin{equation}\label{lm5.6-lab3}
\begin{array}{rcl}
(\nabla v_h^n, \overline{u}_h^{n+1}\nabla u_h^{n+1})&=&\displaystyle\sum_{i=1}^2 \sum_{T\in\mathcal{T}_h} \int_T \partial_{\x_i} v_h^n u^{n+1}_h(\boldsymbol{b}_T) \partial_{\x_i} u^{n+1}_h\, \dx.
\end{array}
\end{equation}
Thus, on substituting \eqref{lm5.6-lab2}  into \eqref{lm5.6-lab3}, we arrive at
\begin{align*}
\int_T \partial_{\x_i} v_h^n u^{n+1}_h(\boldsymbol{b}_T) \partial_{\x_i} u^{n+1}_h\, \dx
&=\displaystyle \int_T \partial_{\x_i} v_h^n u^{n+1}_h(\boldsymbol{b}_T) \frac{u_h^{n+1}(\a_{\overline{u}^i}^T)-u_h^{n+1}(\a_{\underline{u}^i}^T)}{(\a_{\overline{u}^i}^T-\a_{\underline{u}^i}^T)_{i}}\, \dx
\\
&= \frac{1}{2} \int_T \partial_{\x_i} v_h^n  (u_h^{n+1}(\a_{\overline{u}^i}^T)+u_h^{n+1}(\a_{\underline{u}^i}^T)) \frac{u_h^{n+1}(\a_{\overline{u}^i}^T)-u_h^{n+1}(\a_{\underline{u}^i}^T)}{(\a_{\overline{u}^i}^T-\a_{\underline{u}^i}^T)_{i}}\, \dx
\\
&+\int_T \partial_{\x_i} v_h^n \nabla u_h^{n+1}\cdot ( (\boldsymbol{b}_T-\a_{\overline{u}^i}^T) + (\boldsymbol{b}_T-\a_{\underline{u}^i}^T)) \partial_{\x_i} u^{n+1}_h \, \dx
\\
&=  \frac{1}{2} \int_T \partial_{\x_i} v_h^n \partial_{\x_i}\mathcal{I}_h (u_h^{n+1})^2\, \dx
\\
&+ \int_T \partial_{\x_i} v_h^n \nabla u_h^{n+1}\cdot ( (\boldsymbol{b}_T-\a_{\overline{u}^i}^T) + (\boldsymbol{b}_T-\a_{\underline{u}^i}^T)) \partial_{\x_i} u^{n+1}_h \, \dx,
\end{align*}
which combined with \eqref{lm5.6-lab3} shows that
\begin{equation}\label{lm5.6-lab4}
\begin{array}{rcl}
\displaystyle
(\nabla v_h^n, \overline{u}_h^{n+1}\nabla u_h^{n+1})
&=&\displaystyle\frac{1}{2} (\nabla v_h^{n+1},\nabla\mathcal{I}_h (u_h^{n+1})^2)
\\
&&\displaystyle+\frac{1}{2} (\nabla (v_h^n-v^{n+1}_h),\nabla\mathcal{I}_h (u_h^{n+1})^2)
\\
&&\displaystyle+ \sum_{i=1}^2 \sum_{T\in\mathcal{T}_h}\int_T \partial_{\x_i} v_h^n \nabla u_h^{n+1}\cdot  (\boldsymbol{b}_T-\a_{\overline{u}^i}^T) \partial_{\x_i} u^{n+1}_h \, \dx
\\
&&\displaystyle+\sum_{i=1}^2 \sum_{T\in\mathcal{T}_h}\int_T \partial_{\x_i} v_h^n \nabla u_h^{n+1}\cdot  (\boldsymbol{b}_T-\a_{\underline{u}^i}^T) \partial_{\x_i} u^{n+1}_h \, \dx
\\
&:=&I_1+I_2+I_3+I_4.
\end{array}
\end{equation}
It remains to bound each term of \eqref{lm5.6-lab4}. We first proceed with $I_1$. Choose $x_h=\frac{1}{2}\mathcal{I}_h (u^{n+1}_h)^2$ in \eqref{eq:v_h} to write
$$
\begin{array}{rcl}
I_1&=&\displaystyle
-\frac{1}{2}(\delta_t v^{n+1}_h, 	\mathcal{I}_h (u_h^{n+1})^2)_h
-\frac{1}{2}(v_h^{n+1}, \mathcal{I}_h (u_h^{n+1})^2)_h
+\frac{1}{2}(u^{n+1}_h,\mathcal{I}_h (u_h^{n+1})^2)_h
\\
&:=&J_1+J_2+J_3.
\end{array}
$$
An estimate for $J_1$ is easily computed from \eqref{Stab:I_h_Ln} for $n=2, 4$ and the Gagliardo-Nirenberg interpolation $\|x_h\|_{L^4(\Omega)}\le C \|x_h\|^{\frac{1}{2}} \|x_h\|_{H^1(\Omega)}^\frac{1}{2}$. It is given by
$$
\begin{array}{rcl}
J_1&\le& C \|\delta_t v_h^{n+1}\|_h \|u_h^{n+1}\|^2_{L^4(\Omega)}
\\
&\le&C \|\delta_t v_h^{n+1}\|_h ( \|\nabla u^{n+1}_h\| \|u^{n+1}_h\|_h+ \|u^{n+1}_h\|^2_h)
\\
&\le&\displaystyle
C (\|\delta_t v_h^{n+1}\|_h+\|\delta_t v_h^{n+1}\|_h^2)\|u^{n+1}_h\|^2_h+ \frac{\gamma}{4} \|\nabla u^{n+1}_h\|,^2
\end{array}
$$
where $\gamma>0$ is a constant to be adjusted later on. From \eqref{positivity-uh} and \eqref{positivity-vh}, we know that $J_2\le 0$. For $J_3$, we use the interpolation $\|x_h\|_{L^3(\Omega)}\le \widetilde\gamma \|\nabla x_h\|^{\frac{2}{3}} \|x_h \log |x_h|\|_{L^1(\Omega)}^{\frac{1}{3}}+C_{\widetilde\gamma}\|x_h \log |x_h|\|_{L^1(\Omega)}+ C_{\widetilde\gamma} \|x_h\|_{L^1(\Omega)}^{\frac{1}{3}}$ for $\widetilde\gamma>0$ (see \cite[Lemma 3.5]{Nagai_Senba_Yoshida_1997}) and \eqref{Stab:I_h_Ln} for $n=3$ to obtain
$$
\begin{array}{rcl}
J_3&\le& C \|u_h^{n+1}\|^3_{L^3(\Omega)}
\\
&\le&\displaystyle
\frac{\gamma^3}{2} \|u^{n+1}_h \log u^{n+1}_h\|_{L^1(\Omega)} \|\nabla u^{n+1}_h\|^2
\\
&&+C \|u^{n+1}_h\log u^{n+1}_h\|^3_{L^1(\Omega)}+C \|u^{n+1}_h\|_{L^1(\Omega)}.
\end{array}
$$
Inequality \eqref{inv_W1pToLp_global} for $p=2$ shows that
$$
\begin{array}{rcl}
\displaystyle
I_2&=&\displaystyle-\frac{k}{2}(\nabla \delta_t v^{n+1}_h, 	\nabla \mathcal{I}_h(u^{n+1}_h)^2)
\\
&\le&\displaystyle
C \frac{k}{h^2} \|\delta_t v^{n+1}_h\|_h \|u^{n+1}_h\|^2_{L^4(\Omega)}
\\
&\le&\displaystyle
C\frac{k}{h^2}(\|\delta_t v_h^{n+1}\|_h+\|\delta_t v_h^{n+1}\|_h^2)\|u^{n+1}_h\|^2_h+ \frac{\gamma}{4} \|\nabla u^{n+1}_h\|^2.
\end{array}
$$
We treat $I_3$ and $I_4$ together. Thus,
$$
\begin{array}{rcl}
I_3+I_4&\le& C h \|\nabla v^n_h\|_{L^\infty(\Omega)} \|\nabla u^{n+1}_h\|^2\le C  h^{1-\frac{2}{p}} \|\nabla v^{n}_h\|_{L^p(\Omega)} \|\nabla u^{n+1}_h\|^2
\\
&\le & C h^{1-\frac{2}{p}}  \|\widetilde\Delta_h v^n_h\|  \|\nabla u^{n+1}_h\|^2\le C h^{1-\frac{2}{p}} \mathcal{F}(u^0_h,v^0_h) \|\nabla u^{n+1}_h\|^2.
\end{array}
$$
In the above we used \eqref{inv_W1infToW1p_global}, \eqref{stab:Discrete-Laplacian-W1p-H2-xh}, and   \eqref{hyp:induction}. The estimates for the $I_i$'s applied to \eqref{lm5.6-lab1} lead to
\begin{equation}\label{lm5.6-lab5}
\begin{array}{rcl}
\|u^{n+1}_h\|^2_h&-&\|u^n_h\|_h^2+\|u^{n+1}_h-u^n_h\|_h^2+2 k\|\nabla u_h^{n+1}\|^2
\\
&\le&\displaystyle C(1+\frac{k}{h^2}) k (\|\delta_t v_h^{n+1}\|_h+\|\delta_t v_h^{n+1}\|_h^2)\|u^{n+1}_h\|^2_h
\\
&&+ k\Big(\gamma+\gamma^3 \|u^{n+1}_h \log u^{n+1}_h\|_{L^1(\Omega)}+ C h^{1-\frac{2}{p}} \mathcal{F}(u^0_h,v^0_h)\Big) \|\nabla u^{n+1}_h\|^2
\\
&&+Ck  \|u^{n+1}_h\log u^{n+1}_h\|^3_{L^1(\Omega)}+C k \|u^{n+1}_h\|_{L^1(\Omega)}.
\end{array}
\end{equation}
Choose $x_h=-\widetilde\Delta^2 v^{n+1}_h$ in \eqref{eq:v_h} to get
\begin{equation}\label{lm5.6-lab6}
\begin{array}{rcl}
\|\widetilde\Delta_h v^{n+1}_h\|_h^2-\|\widetilde \Delta_h v^n_h\|^2_h+\|\widetilde \Delta_h (v^{n+1}_h- v^n_h)\|_h^2&&
\\
+2k\|\nabla \widetilde\Delta_h v^{n+1}_h\|^2_h+2 k\| \widetilde\Delta_h v^{n+1}_hv^{n+1}_h\|_h^2&=&2 k(\nabla u^{n+1}_h, \nabla \widetilde \Delta_h v^{n+1}_h)
\\
&\le&\displaystyle
k(\frac{4}{3} \|\nabla u^{n+1}_h\|^2+\frac{3}{2}\|\nabla \widetilde \Delta_h u^{n+1}_h\|^2).
\end{array}
\end{equation}

The proof follows by use of \eqref{lm5.6-lab5} and \eqref{lm5.6-lab6}.
\end{proof}

\subsection{Induction argument}
The essential step to finishing up the proof of Theorem~\ref{Th:main} is an induction argument on $n$. We need to verify that the overall sequence $\{u_h^m\}_{m=0}^N$ provided by system \eqref{eq:u_h}-\eqref{eq:v_h} accomplishes the estimates from Theorem~\ref{Th:main}.

Observe first that $\mathcal{F}(u^0_h, v^0_h)$ is uniformly bounded  with regard to $h$, because of \eqref{cond:u^0_h} and \eqref{cond:v^0_h}, and hence we are allowed to choose $(h,k)$ satisfying \eqref{restriction-h-k} and \eqref{restriction-h}.

$\bullet$ \textbf{Case} ($m=1$). We want to prove Theorem \ref{Th:main} for $m=1$. Inequality \eqref{hyp:induction} holds trivially, since $\mathcal{F}(u^0_h, v^0_h)$ is bounded independently of $(h,k)$; thereby, from \eqref{positivity-uh} and \eqref{positivity-vh}, we obtain, for $n=0$, that, for all $\x\in\Omega$,
\begin{equation}\label{Th:lab1}
u^{1}_h(\x)>0
\end{equation}
and
\begin{equation}\label{Th:lab2}
u^{1}_h(\x)\ge0.
\end{equation}
Likewise, we have, by \eqref{Local-L1-Bound-uh} and \eqref{Local-L1-Bound-vh} for $n=0$,  that
$$
\|u^1_h\|_{L^1(\Omega)}=\|u^0_h\|_{L^1(\Omega)}
$$
and
$$
\|v^1_h\|_{L^1(\Omega)}\le \|v^0_h\|_{L^1(\Omega)}+ \|u^0_h\|_{L^1(\Omega)}.
$$
In view of \eqref{Th:lab1} and \eqref{Th:lab2},  inequality \eqref{Local-Energy-Law} for $n=0$ shows that
$$
\begin{array}{rcl}
\mathcal{E}_0(u^{1}_h,v^{1}_h)-\mathcal{E}_0(u^{0}_h,v^{0}_h)+k\|\delta_t v^{1}_h\|^2_h&&
\\
+k\|\mathcal{A}^{-\frac{1}{2}}_h(u^{1}_h)\nabla u^{1}_h-\mathcal{A}^{\frac{1}{2}}_h(u^{1}_h)\nabla v^0_h\|^2&\le&0
\end{array}
$$
which, in turn, gives
\begin{equation}\label{Th:lab3}
\mathcal{E}_0(u^1_h, v^1_h)\le \mathcal{E}_0(u^0_h, v^0_h).
\end{equation}
Applying \eqref{Th:lab3} to \eqref{control_of_(u,v)_h} and \eqref{control_of_(logu,u)_h}  for $n=0$ yields that
\begin{equation}\label{Th:lab4}
\begin{array}{rcl}
 (u^{1}_h, v^{1}_h)_h&\le&\displaystyle \frac{1}{\delta}\mathcal{E}_0(u^0_h, v^0_h)+ \mathcal{R}^{\delta,\varepsilon}_0(u^0_h,v_h^0)
\\
&:=&\mathcal{B}_0(u^0_h,v^0_h),
\end{array}
\end{equation}
and
\begin{equation}\label{Th:lab5}
\begin{array}{rcl}
\|u_h^1\log u_h^1\|_{L^1(\Omega)}&\le&\displaystyle (1+\frac{1}{\delta})\mathcal{E}_0(u^{0}_h, v^{0}_h)+\mathcal{R}^{\varepsilon,\delta}_0(u^0_h,v^0_h)+2\frac{|\Omega|}{e}
\\
&:=&\mathcal{B}_1(u^0_h,v^0_h).
\end{array}
\end{equation}
As a result of applying \eqref{Th:lab4} and \eqref{Th:lab5} to \eqref{Local-Energy-Law} for $n=0$, we find
$$
\begin{array}{rcl}
k\|\delta_t v^{1}_h\|^2_h&\le& \mathcal{E}_0(u^{0}_h,v^{0}_h)-\mathcal{E}_0(u^{1}_h,v^{1}_h)_h
\\
&\le&\displaystyle
\mathcal{E}_0(u^{0}_h,v^{0}_h)+\mathcal{B}_0(u^0_h,v_h^0)+\mathcal{B}_1(u^0_h,v_h^0)
\\
&:=&\mathcal{B}_2(u^0_h,v^0_h).
\end{array}
$$

Selecting $\gamma$ to be sufficiently small such that
\begin{equation}\label{Th:smallness_gamma}
\gamma+\gamma^3\mathcal{B}_2(u^0_h, v^0_h)\le \frac{5}{12}
\end{equation}
and recalling \eqref{restriction-h_II}, this implies, from \eqref{Th:lab3}, that
$$
\mathcal{R}^{\gamma,h}_1(u^1_h,v^1_h)\ge \frac{4}{3}-\gamma-\gamma^3\mathcal{B}_2(u^0_h, v^0_h)- C h^{1-\frac{1}{p}} \mathcal{E}_1(u^0_h,v^0_h)\ge\frac{1}{2};
$$
thus, one can find upon using  \eqref{Local_A_Priori_Bounds} for $n=0$ that
\begin{equation}
\begin{array}{ll}
\displaystyle
\mathcal{E}_1(u^1_h, v^1_h)-\mathcal{E}_1(u^0_h, v^0_h)+2k \|\widetilde\Delta_h v^1_h\|^2
+\frac{k}{2} ( \|\nabla u^1_h\|^2+\|\nabla\widetilde\Delta_h v^1_h\|^2)
\\
\le \displaystyle
C k (\|\delta_t v_h^{1}\|_h+\|\delta_t v_h^{1}\|_h^2)\|u^{1}_h\|^2
+ Ck \mathcal{B}_1^3(u^0_h, v^0_h)+C k \|u^0_h\|_{L^1(\Omega)}.
\end{array}
\end{equation}
Grönwall's inequality now provides the bound
$$
\mathcal{E}_1(u^{1}_h, v^{1}_h)+\frac{k}{2} ( \|\nabla u^{1}_h\|^2+\|\nabla\widetilde\Delta_h v^{1}_h\|^2)\le \mathcal{F}(u^0_h, v^0_h).
$$
Theorem \ref{Th:main} is therefore verified for $m=1$.

$\bullet$ \textbf{Case} $m=n+1$. Assume that the bounds in Theorem \ref{Th:main} are valid for all $m\in\{1, \cdots, n\}$. Consequently, it follows that
\begin{align}
\mathcal{E}_1(u^{n}_h, v^{n}_h)+&\frac{k}{2}\sum_{r=1}^{n} (\|\nabla u^{r}_h\|^2+\|\nabla\widetilde\Delta_h v^{r}_h\|^2)\nonumber
\\
&\le \mathcal{E}_1(u^0_h, v^0_h)+ C\,k\sum_{r=1}^{n} \Big(\|\delta_t v_h^{r}\|_h+\|\delta_t v_h^{r}\|_h^2\Big)\|u^{r}_h\|^2 \label{Th:induc_a_priori_bounds}
\\
\displaystyle
&+C\,k \sum_{r=1}^{n} \Big(\mathcal{B}_1^3(u^{0}_h, v^{0}_h)+ \|u^{0}_h\|_{L^1(\Omega)} \Big)
\nonumber
\end{align}
holds on the basis of
$$
\mathcal{R}^{\gamma,h}_1(u^m_h,v^m_h)\ge\frac{1}{2}\quad \mbox{ for all }\quad m\in\{0, \cdots, n\}.
$$
Then we want to prove Theorem \ref{Th:main} for $m=n+1$. Indeed, by the induction hypothesis \eqref{Global-Energy-Bound} for $m=n$, it is clear that \eqref{hyp:induction} holds; therefore, one has \eqref{positivity-uh} and \eqref{positivity-vh}.  That inequalities \eqref{Global-L1-Bound-uh} and \eqref{Global-L1-Bound-vh} are satisfied for $m=n+1$ is simply by noting \eqref{Local-L1-Bound-uh} and \eqref{Local-L1-Bound-vh}.
Combining \eqref{Local-Energy-Law} and the induction hypothesis \eqref{Global-Energy-Law} for $m=n$, we deduce \eqref{Global-Energy-Law} for $m=n+1$, which implies
\begin{equation}\label{Th:lab7}
\max_{m\in\{0,\cdots, n+1\}}\mathcal{E}_0(u^m_h,v^m_h)\le \mathcal{E}_0(u^0_h,u^0_h).
\end{equation}
As a result of this, we have, by \eqref{control_of_(u,v)_h} and \eqref{control_of_(logu,u)_h}, that
\begin{equation}\label{Th:lab8}
(u^{n+1}_h, v^{n+1}_h)_h\le \mathcal{B}_0(u^0_h,v^0_h)
\end{equation}
and
\begin{equation}\label{Th:lab9}
\|u_h^{n+1}\log u_h^{n+1}\|_{L^1(\Omega)}\le\mathcal{B}_1(u^0_h,v^0_h).
\end{equation}
Moreover, it follows from \eqref{Global-Energy-Law} for $m=n+1$ that
\begin{equation}\label{Th:lab10}
k\sum_{r=0}^{n+1}\|\delta_t v^{r}_h\|^2_h\le \mathcal{B}_2(u^0_h,v^0_h),
\end{equation}
in view of \eqref{Th:lab8} and \eqref{Th:lab9}.

Once again if $\gamma$ is chosen to be small enough such that \eqref{Th:smallness_gamma} holds and condition \eqref{restriction-h_II} is invoked, one finds
$$
\mathcal{R}^{\gamma,h}_1(u^{n+1}_h,v^{n+1}_h)\ge \frac{4}{3}-\gamma-\gamma^3\mathcal{B}_2(u^0_h, v^0_h)- C h^{1-\frac{1}{p}} \mathcal{E}_1(u^0_h,v^0_h)\ge\frac{1}{2}
$$
owing to \eqref{Th:lab7}. We thus infer from \eqref{Local_A_Priori_Bounds} combined with \eqref{Th:induc_a_priori_bounds} that 
\begin{align*}
\mathcal{E}_1(u^{n+1}_h, v^{n+1}_h)+\frac{k}{2}\sum_{r=1}^{n+1} (\|\nabla u^{r}_h\|^2+\|\nabla\widetilde\Delta_h v^{r}_h\|^2)
\\
\le \mathcal{E}_1(u^0_h, v^0_h)+ C\,k\sum_{r=1}^{n+1} (\|\delta_t v_h^{r}\|_h+\|\delta_t v_h^{r}\|_h^2)\|u^{r}_h\|^2
\\
\displaystyle
+C\,k \sum_{r=1}^{n+1} \Big(\mathcal{B}_1^3(u^{0}_h, v^{0}_h)+C k \|u^{r}_h\|_{L^1(\Omega)}\Big)
\end{align*}
and hence Grönwall's inequality provides \eqref{Global-Energy-Bound} for $m=n+1$ when used \eqref{Th:lab10}.

\section{Computational experiments}

\newcommand{\cteU}{\ensuremath{C_u}}
\newcommand{\cteV}{\ensuremath{C_v}}
\newcommand{\Nsquare}{N_{\mbox{square}}}

The computational experiments are meant to support and complement the theoretical results in the earlier sections in two different settings. On the one hand, we regard initial data $u_0$ under the condition $\int_\Omega u_0(\x)\,\dx\in (0, 4\pi)$, which give solutions remaining bounded over time. On the other hand, we use a particularly demanding test where a finite time blowup is expected. For this latter numerical test, it must be said that the blowup setting is out of reach from our analysis since \eqref{Global-Energy-Bound} is not satisfied for blowup solutions; therefore, lower bounds cannot be guaranteed. Nevertheless, the results are striking with regard to lower bounds since they fail very close to the expecting blowup time for not so small discrete parameters.

All the computations were performed with the help of the FreeFem++ framework~\cite{Hecht_2012}.

\subsection{Non-blowup setting}
\label{sec:non-blow-up}
As the domain we take the square $\overline\Omega = [-1/2,
1/2]^2$. The evolution starts from the bell-shaped initial data
\begin{equation}
  u_0 = \cteU e^{-\cteU(x^2+y^2)} \quad\mbox{and}\quad  v_0 = \cteV e^{-\cteV(x^2+(y-0.5)^2)},
  \label{eq:u0v0}
\end{equation}
which conditions fulfill a homogeneous Neumann boundary condition approximately\footnote{If $C_u$ and $C_v$ are quite small, the Neumann boundary condition on $(u_0, v_0)$ is not approximately null. For this reason, we only take \rf{$\cteU$} bigger or equal to 40.}. It should be noticed that $u_0$ is centered at the origin $(0,0)$, whereas $v_0$ is centered at the midpoint of  the top edge of the domain.

From now on, it is assumed that the constant $C_v$ and $C_u$ are the same. Then, for each $\cteU$, one can compute that $\int_\Omega u_0(\x)\,\dx\in (0,4\pi)$; therefore, problem \eqref{KS} with \eqref{IC}--\eqref{BC} has a unique, smooth solution. As a result of this experiment, we expect diffusion and chemotaxis transfer of cells (the $u$ component of the solution) from the center of the domain toward the top edge, where the highest concentration of chemical agent (the $v$ component of the solution) is found.

For the spacial discretization, we introduce the $\mathcal{P}_1$ finite element space $X_h$ associated with an \emph{acute} mesh $\mathcal{T}_h$ defined as follows. From an $\Nsquare\times\Nsquare$ uniform grid, obtained by dividing $\Omega$ into macroelements consisting of squares, we construct the mesh $\mathcal{T}_h$ by splitting each macroelement into $14$ acute triangles as indicated in Figure \ref{fig:acute_macrolement}.  This way, for $\Nsquare = 50$, we define a mesh consisting of 35,000 acute triangles and 17,701 vertices with mesh size $h\simeq 0.0101247$. Selecting $\cteU=70$, we compute $N=50$ time iterations using scheme~(\ref{eq:u_h})--(\ref{eq:v_h}) with time step $k=10^{-4}$. Snapshots of the simulations at times $t_n=0$, $2.5\cdot 10^{-3}$ and $5\cdot 10^{-3}$ are collected in Figure~\ref{fig:test1_cu70}. The same test is repeated for $\cteU\in\{40,50,60\}$, checking that positivity of the numerical solution is preserved over time iterations. In all these cases, the qualitative behavior expected in chemotaxis phenomena is obtained.
\begin{figure}
  \centering
  \includegraphics[width=0.33\linewidth, height=0.3\linewidth]{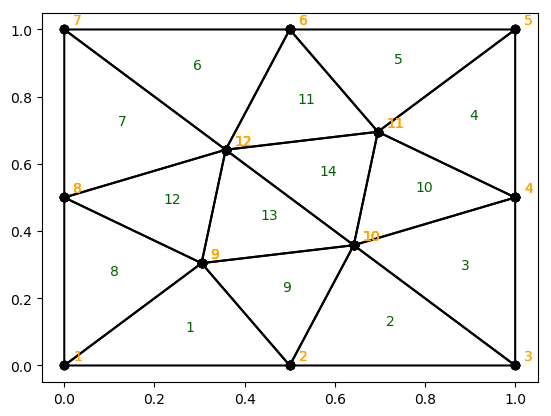}
  \caption{Reference macrolement, composed of 14 acute triangles}
  \label{fig:acute_macrolement}
\end{figure}
\begin{figure}
  \centering
  \begin{tabular}{c@{}c@{}c}
  \includegraphics[width=0.33\linewidth]{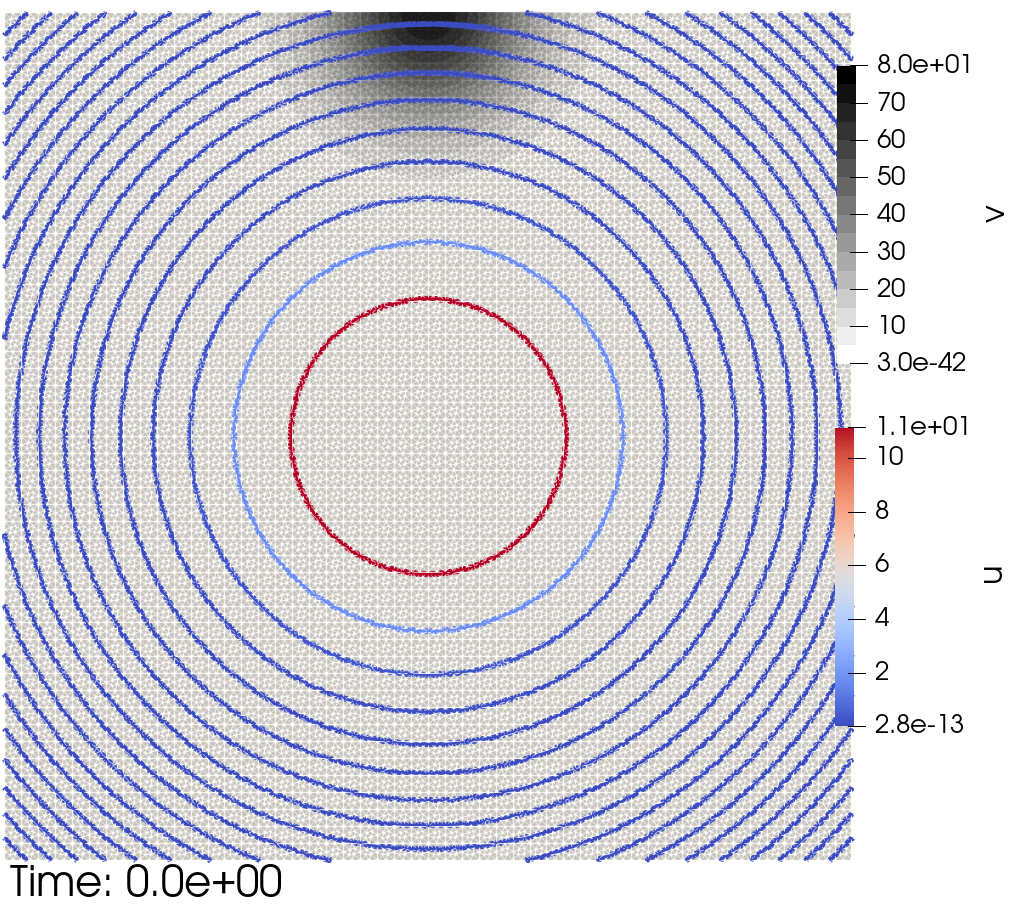}&
  \includegraphics[width=0.33\linewidth]{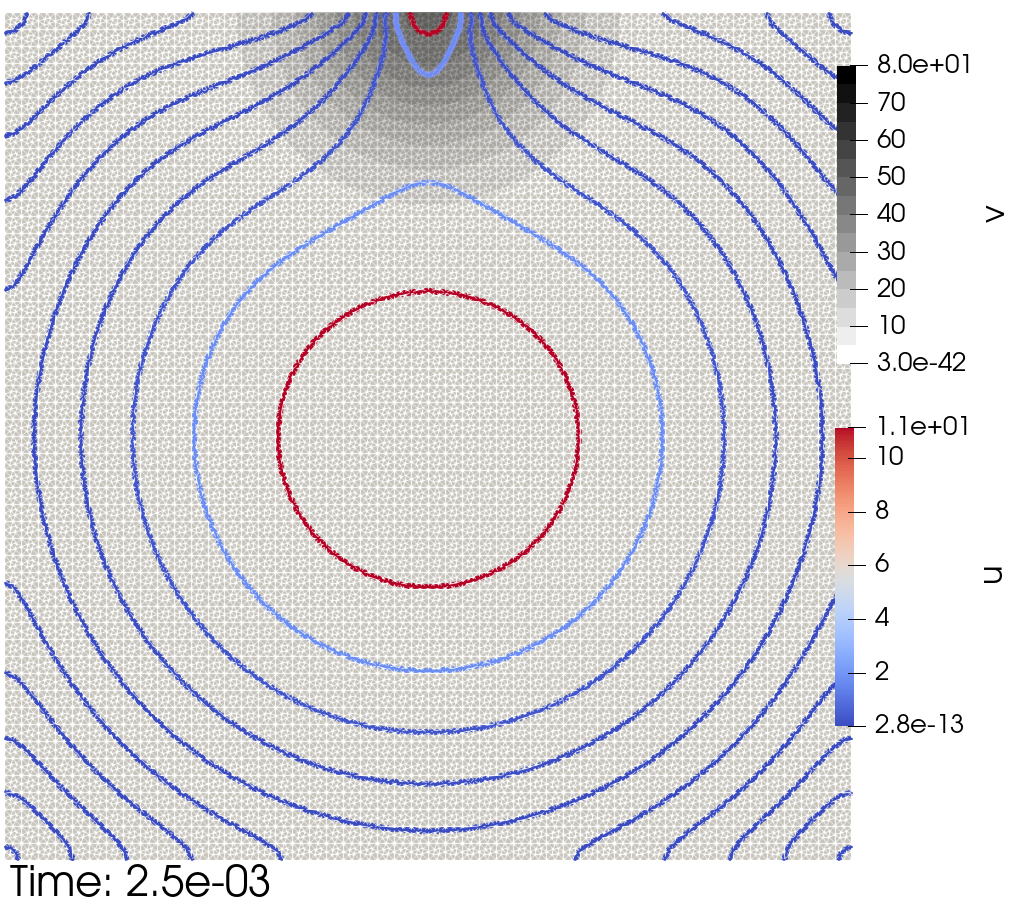}&
  \includegraphics[width=0.33\linewidth]{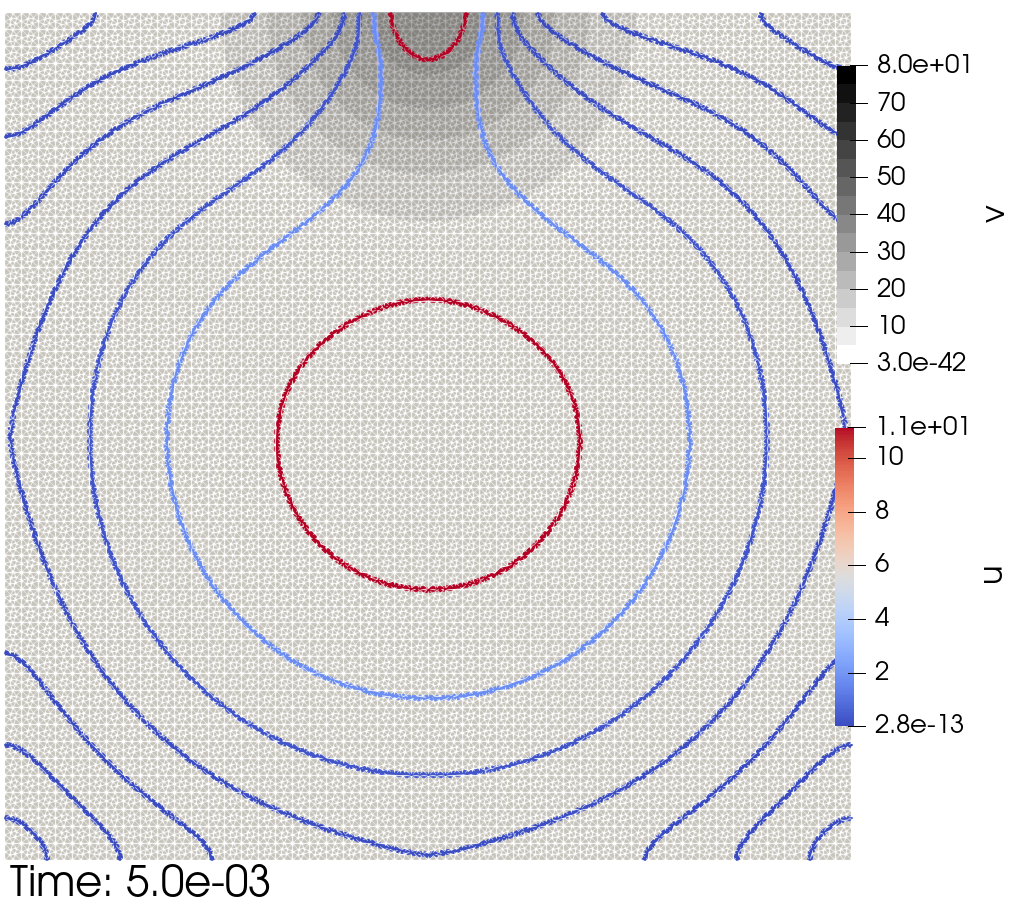}
  \end{tabular}
  \caption{Solution $u^n_h$ (colored isolines) and $v_h$ (gray scale
    background) at three time steps: $t_n=0$, $2.5\cdot 10^{-3}$ and
    $5\cdot 10^{-3}$. Diffusion
    and chemotaxis transfer of $u^n_h$ (cells) towards highest
    concentrations of $v^n_h$ can be seen along time. Acute mesh with
    $\Nsquare=50$, $k=10^{-4}$, initial data parameter: $\cteU=70$}.
  \label{fig:test1_cu70}
\end{figure}

Positivity of $u_h^{n+1}$ breaks if $\cteU$ grows beyond $\cteU\simeq 70$, but $v_h^{n+1}$ remains positive. Note that, as $\cteV=\cteU$ is increased, $\|\nabla v_h^0\|_{L^\infty(\Omega)}$ becomes larger and larger, with $v_0$ defined in~(\ref{eq:u0v0}); consequently, $\|\nabla v_h^n\|_{L^\infty(\Omega)}$ does at least for the first time steps. Therefore,  computing $u^{n+1}_h$ using \eqref{eq:u_h} turns out to be more demanding. Figure \ref{fig:min_uh} (top) plots the values $\{\min_{\x\in\Omega}u_h^n(\x)\,, n=0,\dots,50\}$ for $\cteU\in\{70,80,90,100\}$. Positivity is recovered once $\|\nabla v_h^n\|_{L^\infty(\Omega)}$ becomes small enough.

This loss of positivity for large values of \rf{$\cteU$} is not in contradiction to \eqref{Global-Lower-Bound-uh} in Theorem~\ref{Th:main}, since \eqref{restriction-h-k} and \eqref{restriction-h} are not fulfilled for those cases\footnote{Since we do not know the exact value  of the constant $C$ in conditions \eqref{restriction-h-k} and \eqref{restriction-h}, we took $C$ to be $1$. On the other hand, for values of $(\cteU, \cteV)$ being small enough, both conditions hold since $F(u_0, v_0)$ decreases as $(\cteU, \cteV)$ do. So, positivity is maintained.}. Moreover it is remarkable that, for $\cteU\simeq 70$, $u_h^{n+1}$ keeps positivity, even when \eqref{restriction-h-k} and \eqref{restriction-h} are quite far from being verified. In fact, $\mathcal{F}(u_h^0, v_h^0)$ takes huge values, which exceed the capacity of floating point standards. These huge values stem from $\|\nabla v_h^0\| \simeq 7,687.66$, which gives ${\mathcal B}_2(u_h^0,v_h^0)\simeq 23,411.5$, used as an exponent for computing ${\mathcal F}(u_h^0,v_h^0)$. In this sense, our numerical experiments suggest that there might be room for improvement in conditions \eqref{restriction-h-k} and \eqref{restriction-h} of Theorem~\ref{Th:main}.

\begin{figure}
  \centering
  \begin{tabular}{c@{}c}
    \includegraphics[width=0.97\linewidth]{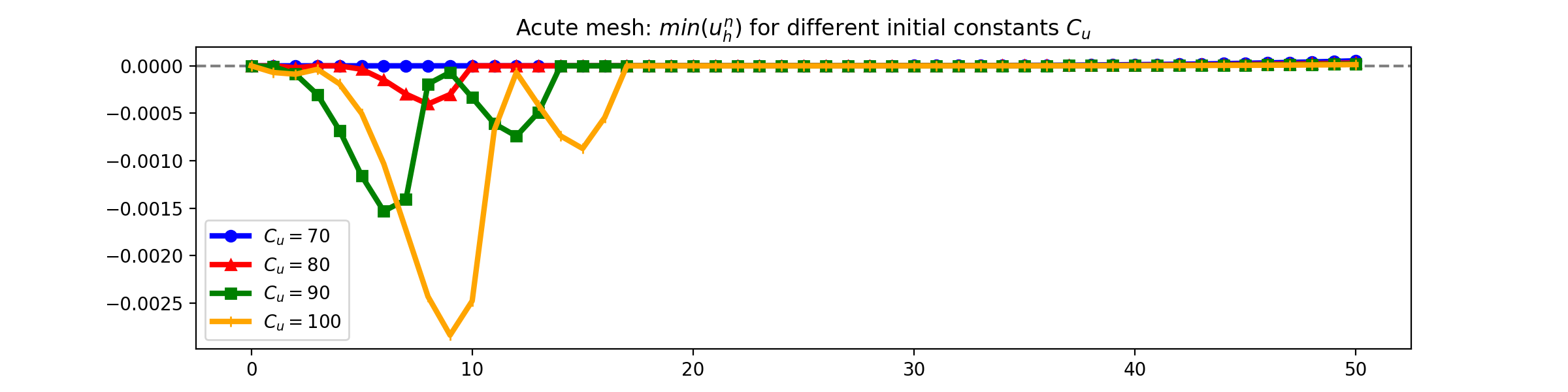}
    \\
  \includegraphics[width=0.97\linewidth]{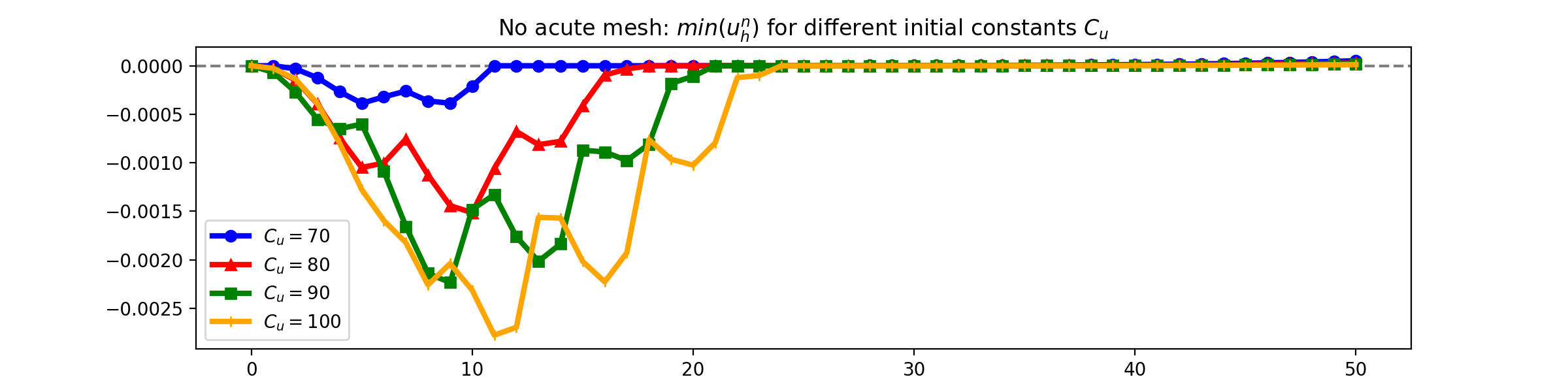}
  \end{tabular}
  \caption{Plot of $\min_{\mathcal{T}_h}(u_h^n)$,
    $n=0,\dots 50$, where ${\mathcal{T}_h}$ is an acute mesh (top) or
    non-acute mesh (bottom). Initial value constants:
    $\cteU=70,80,90,100$}.
  \label{fig:min_uh}
\end{figure}

In order to compare the performance of scheme \eqref{eq:u_h}--\eqref{eq:v_h} using a \emph{non-acute} mesh, we consider, as before, a mesh composed of $50\times 50$ macroelements as depicted in Figure \ref{fig:obtuse_macrolement}.  This way the theoretical results shown in this paper may not be applied.
\begin{figure}
  \centering
  \rf{\includegraphics[width=0.4\linewidth, height=0.37\linewidth]{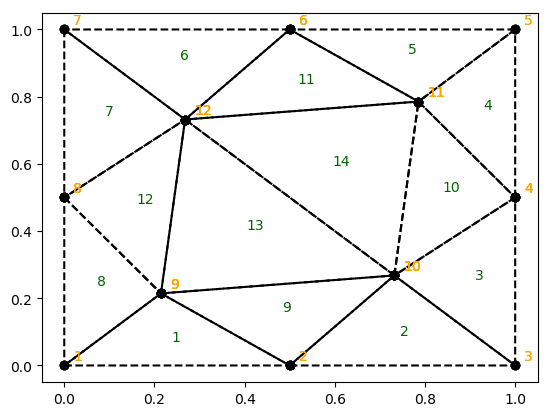}}
    \caption{Reference macrolement containing some obtuse triangles (triangles 1, 9, 5 and 11).}
  \label{fig:obtuse_macrolement}
\end{figure}

Diffusion and chemotaxis movements are obtained as observed in Figure \ref{fig:test1_cu70} for $\cteU=70$, but an earlier lost of positivity as well. In particular, it is lost from the first time step
($\min_{{\mathcal T}_h} (u^n_h) \simeq -1.39289\cdot 10^{-5}$ at $t_n=10^{-4}$). Positivity is not completely recovered until $t_n=0.0018$, thereafter positive values persist with time. Figure \ref{fig:min_uh} (bottom) displays the evolution of the values $\{\min_{\x\in\Omega}u_h^n(\x)\,, n=0,\dots,50\}$ for $\cteU\in\{70,80,90,100\}$.

\subsection{Blowup setting}
The second suite of tests is focused on a blowup context.  We consider
\begin{equation}
  u_0 = \cteU e^{-0.1\cteU(x^2+y^2)} \quad\mbox{ and }\quad  v_0 = \cteV e^{-0.1\cteV(x^2+y^2)},
  \label{eq:u0v0.test2}
\end{equation}
with $\cteU=1000$ and $\cteV=500$. Thus constructed, initial data are large enough to expect a finite time blowup for both $u$ and $v$ components of the continuous solution to problem \eqref{KS} with \eqref{IC}--\eqref{BC}. For details, see e.g.~\cite{Chertock_Kurganov_2008}, where the blowup time $t^*$ is conjectured to be located in the time interval $(4.4\cdot 10^{-5}, 10^{-4})$. 

When used an \textit{acute} mesh of macroelements as in Figure \ref{fig:acute_macrolement} for approximating such a demanding blowup test,  scheme \eqref{eq:u_h}--\eqref{eq:v_h} cannot aspire to achieve positivity over the whole blowup interval. The reason is that conditions \eqref{restriction-h-k} and \eqref{restriction-h} in Theorem~\ref{Th:main} are not fulfilled, because $\|u_h^0\|$ and $\|\nabla v_h^0\|$ are too large ($\|u_h^0\|_h^2 = 15,708$ and $\|\nabla v_h^0\|_{L^2(\Omega)}^2 = 785,230$). However, as in the previous experiments, one does not need \eqref{restriction-h-k} and \eqref{restriction-h} to hold so as to keep positivity.  For instance, a value $\Nsquare = 600$ suffices to obtain positivity for the overall blowup interval. Moreover, a value $\Nsquare=100$ ($h\simeq 0.005$) maintains positivity well into $(4.4\cdot 10^{-5}, 10^{-4})$. To be more precise, if ${\mathcal T}_h$ is defined by $100\times 100$ macroelements and $k=10^{-6}$ is chosen, then $u^{n}_h>0$ and $v^{n}_h>0$ for $t_n\in [0, 8.7\cdot 10^{-5})$. For $t_n=8.7\cdot 10^{-5}$ ($n=88$), one gets $\min_{{\mathcal T}_h}(u^n_h)=-90.7418$.
\begin{figure}
  \centering
  \begin{tabular}{c@{\rule{2em}{0pt}}c}
    \includegraphics[width=0.4\linewidth]{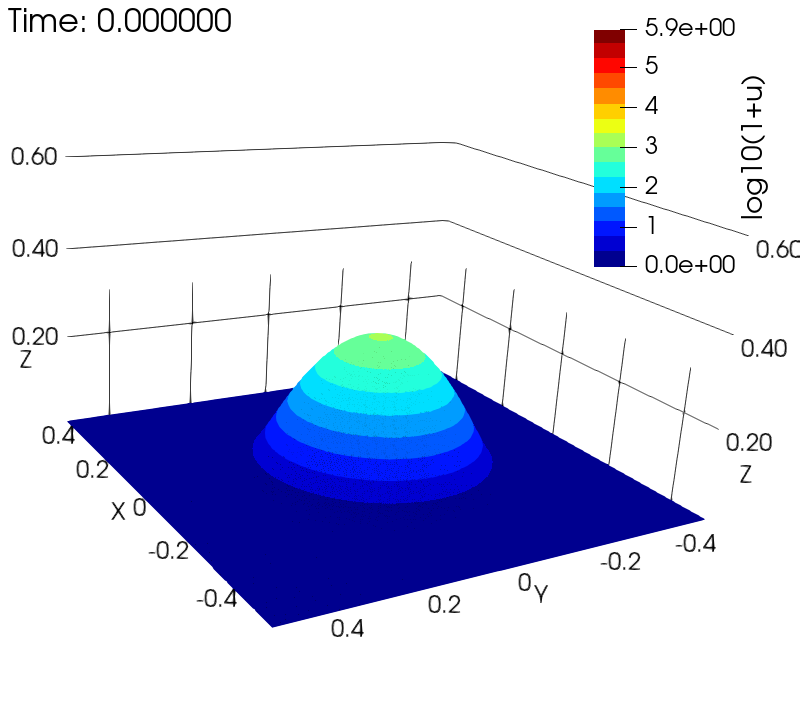}&
    \includegraphics[width=0.4\linewidth]{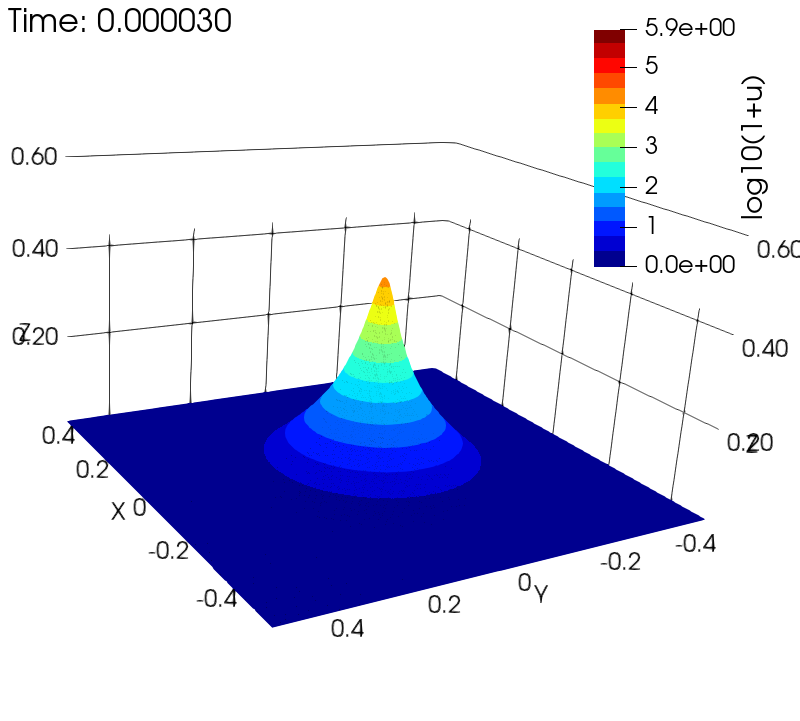}
    \\
  \includegraphics[width=0.4\linewidth]{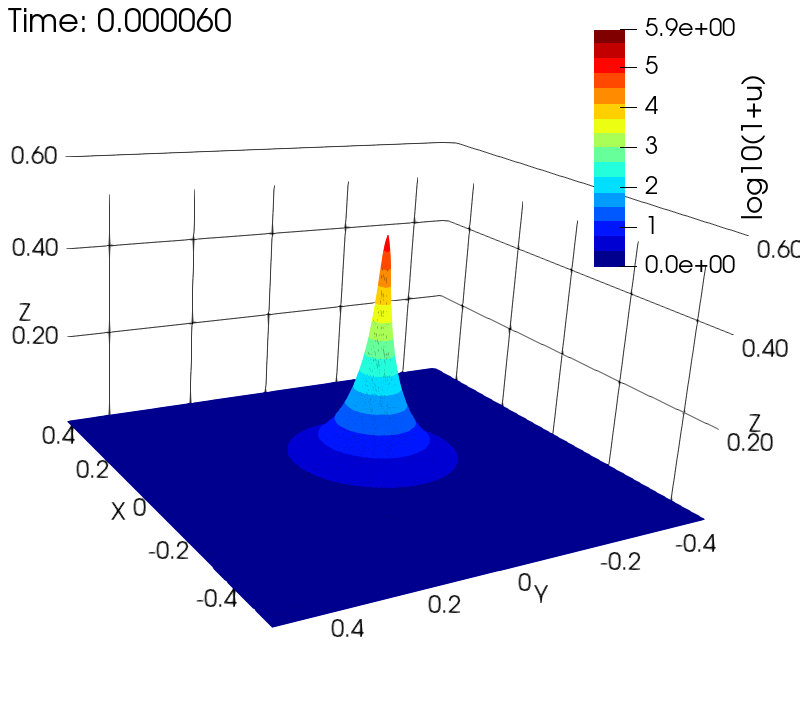}&
  \includegraphics[width=0.4\linewidth]{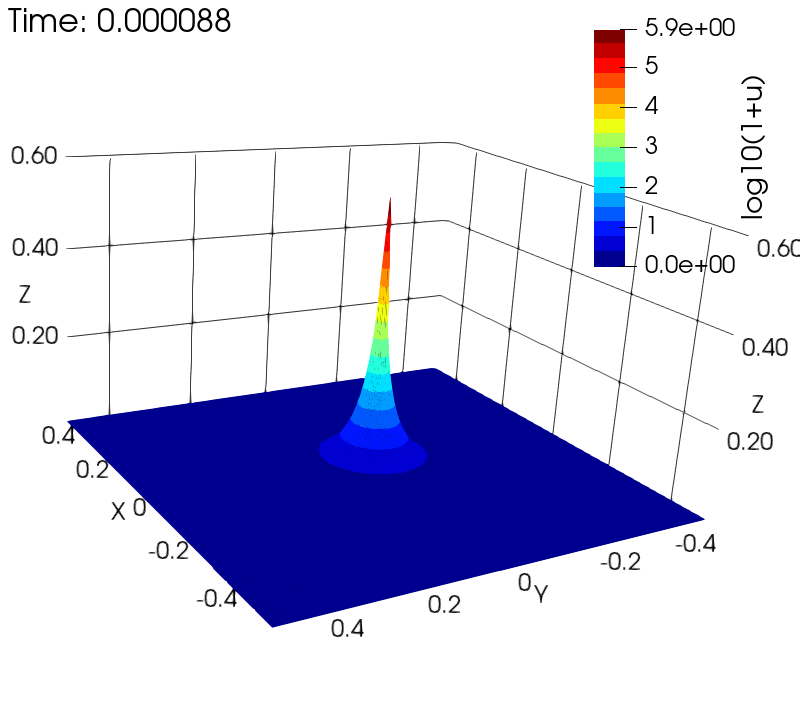}
  \end{tabular}
  \caption{Solution $u^n_h$ associated with an acute mesh of
   $100\times 100$ macroelements at time steps $n=0$, $30$, $60$, and $88$ (when positivity is broken).}
  \label{fig:blowup}
\end{figure}
The evolution of $u^n_h$ is shown (on a logarithmic scale) in Figure \ref{fig:blowup} for time steps $n=0$, $30$, $60$, and $88$. A blowup phenomenon in the center of the domain can be observed: the maximum value of $u^n_h$ grows over time, reaching $\max(u^n_h)=8.85515\times 10^5$, while its support shrinks.

When considering a \emph{non-acute} mesh of $100\times100$ macroelements (see Figure \ref{fig:obtuse_macrolement}), we encounter that  positivity is only maintained until $t_n=6.7\cdot 10^{-5}$.  At time $t_n=6.8\cdot 10^{-5}$ ($n=68$), $\min_{{\mathcal T}_h}(u^n_h)=-64.2157$ and $\max_{{\mathcal T}_h}(u^n_h)=2.16982\times 10^5$.
\begin{figure}
  \centering
  \begin{tabular}{@{}c@{\rule{.03\linewidth}{0pt}}c@{\rule{.03\linewidth}{0pt}}c@{}}
    \includegraphics[width=0.31\linewidth]{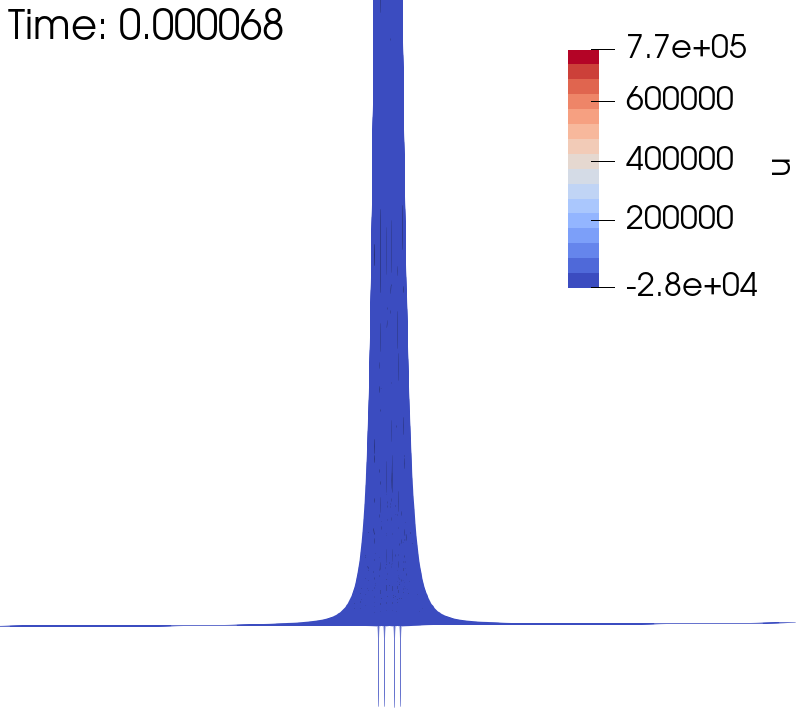}&
    \includegraphics[width=0.31\linewidth]{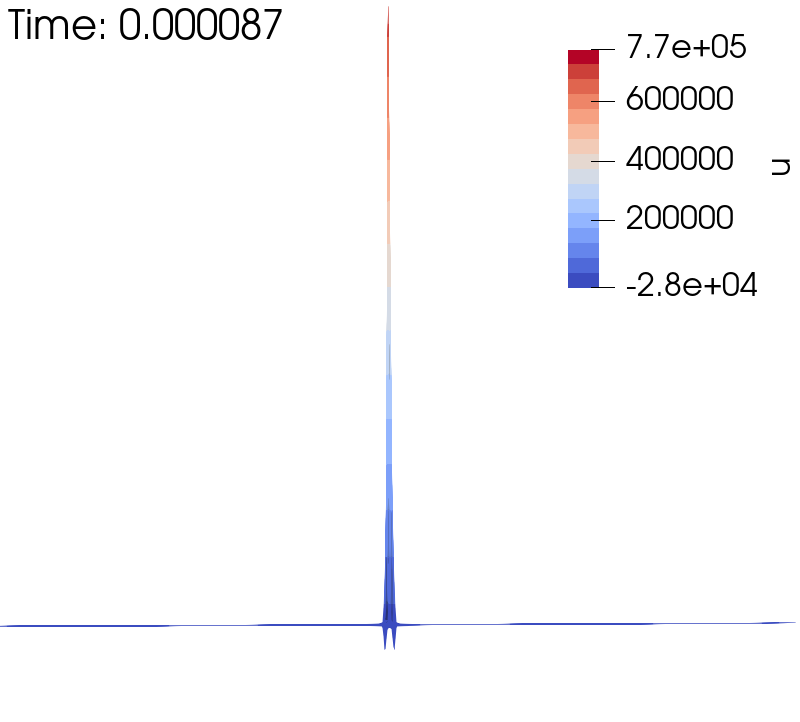}&
    \includegraphics[width=0.31\linewidth]{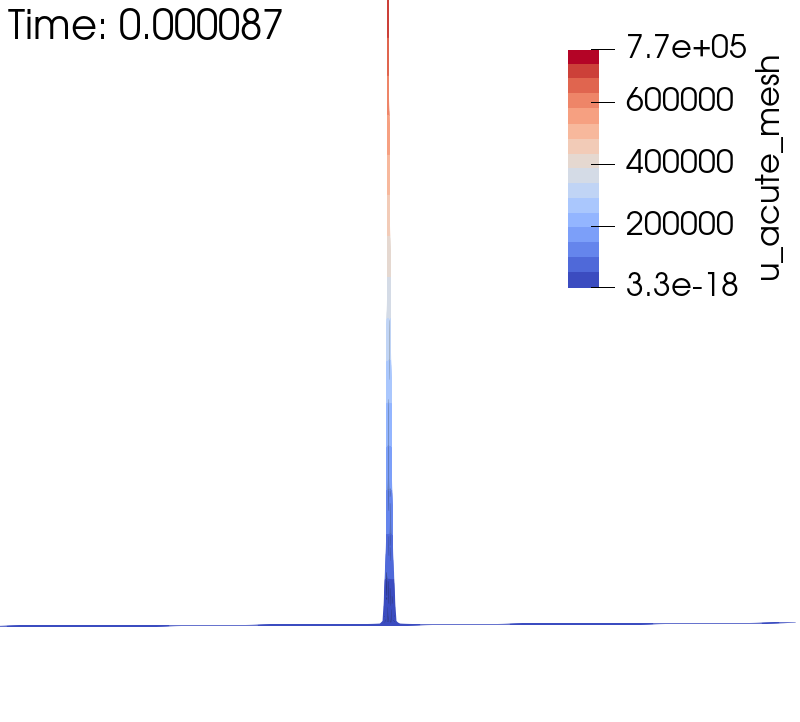}
  \end{tabular}
  \caption{ Left: zoomed detail of $u^n_h$ associated with a non-acute mesh at $t_n=6.8\cdot 10^{-5}$; positivity is broken.  Center:  $u^n_h$ associated with a  non-acute mesh at $t_n=8.7\cdot 10^{-5}$; deep negative values appear. Right: $u^n_h$ associated with an acute mesh at $t_n=8.7\cdot 10^{-5}$, positivity is maintained.}

  \label{fig:positivity}
\end{figure}
Figure~\ref{fig:positivity} shows the numerical solution $u^n_h$ at $t_n=6.8\cdot 10^{-5}$ (left), when positivity is broken for the first time, and at $t_n=8.7\cdot 10^{-5}$ (center), when negative values of order $10^{-4}$ are reached. Otherwise, the numerical solution $u^n_h$ associated with an acute mesh keeps positivity at time $t_n=8.7\cdot 10^{-5}$ (right).

\end{document}